\numberwithin{equation}{section}
\newtheorem{trm}{Theorem}
\newtheorem{thm}{Theorem}[section]
\newtheorem{prop}[thm]{Proposition}
\newtheorem{lem}[thm]{Lemma}
\newtheorem{cor}[thm]{Corollary}
\newtheorem{rem}[thm]{Remark}
\newtheorem{example}[thm]{Example}
\newtheorem{dfn}[thm]{Definition}
\newcommand{\bC}{{\mathbb C}}
\newcommand{\bR}{{\mathbb R}}
\newcommand{\bP}{{\mathbb P}}
\newcommand{\bZ}{{\mathbb Z}}
\newcommand{\martina}[1]{\textcolor{red}{\textbf{#1}}}
\begin{document}

\title[Totally nonnegative Grassmannians and quiver Grassmannians]
{Totally nonnegative Grassmannians, Grassmann necklaces and quiver Grassmannians}

\author{Evgeny Feigin}
\address{E. Feigin:\newline
HSE University\\
Faculty of Mathematics\\
Usacheva 6\\Moscow 119048\\Russia\newline
{\it and }\newline
Skolkovo Institute of Science and Technology\\ 
Center for Advanced Studies\\
Bolshoy Boulevard 30, bld. 1\\
Moscow 121205\\
Russia
}
\email{evgfeig@gmail.com}
\author{Martina Lanini}
\address{M. Lanini:\newline Dipartimento di Matematica\\ Universit\`a di Roma ``Tor Vergata'',  Via della Ricerca Scientifica 1, I-00133 Rome, Italy}
\email{lanini@mat.uniroma2.it}
\author{Alexander P\"utz}
\address{A. P\"utz:\newline
Faculty of Mathematics\\
Ruhr-University Bochum\\
Universit\"atsstra\ss e 150\\
44780 Bochum\\
Germany}
\email{alexander.puetz@ruhr-uni-bochum.de}

\keywords{Quiver Grassmannians, totally nonnegative Grassmannians}

\begin{abstract}
Postnikov constructed a cellular decomposition of the totally nonnegative Grassmannians. The poset of cells can be described (in particular) via  Grassmann necklaces.  
We study certain quiver Grassmannians for the cyclic quiver admitting a cellular decomposition, whose
cells  are naturally labeled by Grassmann necklaces. We show that the posets of cells 
coincide with the reversed cell posets of the cellular decomposition of the totally nonnegative Grassmannians. We investigate algebro-geometric and combinatorial properties of these quiver Grassmannians.
In particular, we describe the irreducible components,  study the action of the automorphism groups
of the underlying representations and describe the moment graphs. We also construct a resolution 
of singularities for each irreducible component; the resolutions are defined as  quiver
Grassmannians for an extended cyclic quiver.
\end{abstract}

\maketitle

\section*{Introduction}
Total positivity has a long story, starting in the first half of the 20th century \cite{GK37, Schoe47}. Thanks to Lusztig \cite{Lus94, Lus98a, Lus98b} it became of interest to Lie theorists and combinatorialists because of the relation with canonical bases. More precisely, Lusztig introduced the notion of totally non negative part of (real generalized) flag  varieties. The totally nonnegative (\emph{tnn} for short) Grassmannian ${\rm Gr}(k,n)_{\ge 0}$ is hence a special case of these and admits an easy description under the Pl\"ucker embedding. Namely,  ${\rm Gr}(k,n)_{\ge 0}$ is the subvariety of the (real) Grassmannian of $k$-dimensional subspaces of $\mathbb{R}^n$ whose points have Pl\"ucker coordinates all of the same sign (i.e. they can be normalized to be all nonnegative).


The tnn Grassmannians attracted a lot of attention due
to a large number of applications and various links with other mathematical structures, see \cite{Lam16} and 
references therein. In \cite{Pos06} Postnikov constructed a stratification of ${\rm Gr}(k,n)_{\ge 0}$ by the so called 
positroid cells, where each cell is isomorphic to a product of some copies of $\bR_{>0}$. There are many nice combinatorial
ways to label the positroid cells. In particular, the cells are in bijection with the so called Grassmann
necklaces: collections $(I_1,\dots,I_n)$ of $k$-element subsets of the set $\{1,\dots,n\}$ 
such that 
\[
I_1\ \subset I_2 \cup \{1\},\ I_2\ \subset I_3 \cup \{2\}, \dots,  I_n\subset I_1\cup\{n\}
\]
(a version of this definition leads to the notion of juggling pattern from \cite{KLS13}). 
 It is possible to equip the set of Grassmann necklaces with a combinatorial partial order, which turns out to agree with the reversed positroid cell closure relation (see e.g. \cite{W05}).
The same poset pops up in the study of the positroid varieties \cite{Lus98a,KLS13,KLS14}. These varieties stratify complex
Grassmannians and one can use the tools of algebraic geometry for their study. We note that the positroid varieties are no longer affine cells; Postnikov's positroid cells are obtained by intersecting positroid varieties with the tnn  Grassmannians. 

The notion of  Grassmann necklaces has a natural linear algebra analog. Namely, let us consider
an $n$-dimensional complex vector space $W$ with a fixed basis and the projections ${\rm pr}_i$ along the basis
vectors of $W$. Let $X(k,n)$ be the variety of collections of $k$-dimensional subspaces $V_1,\dots,V_n$ of
$W$ such that
\[
{\rm pr}_1V_1\subset V_2,\ {\rm pr}_2 V_2\subset V_3,\dots, {\rm pr}_n V_n\subset V_1 
\]
(this variety is denoted by ${\rm Jugg}$ in \cite{Kn08}). Our key observation is that $X(k,n)$ is isomorphic
to a quiver Grassmannian for the (equioriented) cyclic quiver $\Delta_n$ on $n$ vertices (Proposition~\ref{prop:Jugg-as-quiver-grass}).
More precisely, there exists a $\Delta_n$ module $U_{[n]}$ such that
\begin{equation}\label{eqn:IsoXknQuiverGrass}
X(k,n)\cong {\rm Gr}_{(k,\dots,k)}(U_{[n]}).
\end{equation}

In 1992, motivated by the study of quiver representations, Schofield introduced quiver Grassmannians \cite{Scho92}. Since then, they have been widely investigated, also in relation with positivity phenomena in cluster algebra theory, see the survey \cite{CI20} for an account of all progress achieved on quiver Grassmannian in these decades. It is shown in \cite{Re13} that every complex projective variety can be realized as a quiver Grassmannian. Finding a suitable quiver Grassmannian realization, as in our case, is hence desirable, as one can exploit all developed quiver techniques on top of the classical algebro-geometric tools. In particular, using \eqref{eqn:IsoXknQuiverGrass} we prove the following:

\begin{trm}\label{A}
$X(k,n)$ admits a cellular decomposition,  the cells are labeled by the Grassmann necklaces and the poset of cells is isomorphic to the reversed poset of the tnn Grassmannian cells.
\end{trm}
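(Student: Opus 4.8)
The plan is to carry the problem to the quiver Grassmannian ${\rm Gr}_{(k,\dots,k)}(U_{[n]})$ via the isomorphism \eqref{eqn:IsoXknQuiverGrass} and to obtain the cells from a torus action. Recall that, with $W=\bigoplus_{j=1}^n\bC w_j$ and the arrow $i\to i+1$ of $\Delta_n$ acting by ${\rm pr}_i$, the module $U_{[n]}$ decomposes as $U_{[n]}\cong\bigoplus_{j=1}^n M^{(j)}$, where $M^{(j)}$ has dimension vector $(1,\dots,1)$ and its only vanishing structure map sits on the arrow $j\to j+1$; in particular $U_{[n]}$ is a direct sum of nilpotent segment (hence string) modules of the cycle. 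Assigning pairwise distinct integer weights $d_1,\dots,d_n$ to the summands $M^{(1)},\dots,M^{(n)}$ defines a $\bC^\times$-action on each space $W$ of the representation, compatible with all structure maps, and hence a $\bC^\times$-action on ${\rm Gr}_{(k,\dots,k)}(U_{[n]})$. As this variety is projective, $\lim_{t\to 0}t\cdot x$ exists for every $x$, and the fixed locus consists exactly of the \emph{coordinate subrepresentations}: tuples $(V_1,\dots,V_n)$ with $V_i=\langle w_j:j\in I_i\rangle$ for some $k$-element subset $I_i\subseteq\{1,\dots,n\}$.

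The first bookkeeping step identifies these fixed points with Grassmann necklaces. For a coordinate subrepresentation the condition ${\rm pr}_iV_i\subseteq V_{i+1}$ becomes $\langle w_j:j\in I_i\setminus\{i\}\rangle\subseteq\langle w_j:j\in I_{i+1}\rangle$, i.e.\ $I_i\subseteq I_{i+1}\cup\{i\}$ (indices modulo $n$), which together with $|I_i|=k$ is precisely the definition of a Grassmann necklace $(I_1,\dots,I_n)$. So the $\bC^\times$-fixed locus of $X(k,n)$ is canonically the set of Grassmann necklaces of type $(k,n)$.

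I would then build the cellular decomposition as the Bia\l ynicki--Birula decomposition of this action: $X(k,n)$ is the disjoint union of the attracting cells $C_p=\{x:\lim_{t\to 0}t\cdot x=p\}$ over the finitely many fixed points $p$, and the crucial point is that each $C_p$ is isomorphic to an affine space. Quiver Grassmannians of this kind are generally singular, so this is not automatic; it follows instead from the string structure of $U_{[n]}$, either by citing the known cellularity of quiver Grassmannians of string modules, or directly: writing a point $(V_\bullet)\in X(k,n)$ in reduced row echelon form at each vertex, its limit under $t\to 0$ is the coordinate subrepresentation on the tuple of pivot sets $(I_1,\dots,I_n)$ -- which is therefore a Grassmann necklace -- and $C_p$ is the set of $(V_\bullet)$ with this prescribed tuple of pivot sets, cut out in the product of Schubert cells by the equations ${\rm pr}_iV_i\subseteq V_{i+1}$; the nilpotency of the structure maps lets one solve these equations triangularly, exhibiting $C_p$ as an affine space. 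This gives a cellular decomposition of $X(k,n)$ with cells indexed by Grassmann necklaces.

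It remains to compare the closure order of the cells $C_p$ with the (reversed) order of the tnn Grassmannian cells. By \cite{W05} the combinatorial partial order on Grassmann necklaces coincides with the \emph{reverse} of the positroid cell closure order, so it is enough to prove that $C_q\subseteq\overline{C_p}$ holds if and only if the necklace of $q$ is above the necklace of $p$ in this combinatorial order. I would establish this by comparing covering relations: the torus-action picture presents the cell order as generated by the codimension-one degenerations among the $C_p$, which translate into explicit one-step moves on the labelling necklaces, and these are matched with the covering relations of the combinatorial necklace order (the reversal encoding the fact that the cell $C_p$ acquires in its closure the cells attached to the ``smaller'' coordinate subrepresentations). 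Carrying out this last identification -- showing that the geometric cell order and the combinatorial necklace order really are the same poset -- is the main obstacle; given \eqref{eqn:IsoXknQuiverGrass} and the torus action, everything preceding it is routine.
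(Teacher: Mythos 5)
Your overall architecture (transport to ${\rm Gr}_{\mathbf k}(U_{[n]})$, pick a $\bC^*$-action, take the Bia\l ynicki--Birula attracting sets, then match the closure order with the necklace order) is the paper's architecture, and your identification of the fixed points with Grassmann necklaces is correct. But the specific one-parameter subgroup you choose --- pairwise distinct weights $d_1,\dots,d_n$, one per indecomposable summand $M^{(j)}$, hence constant along each summand --- is precisely the action the paper introduces first and then \emph{discards}: it is a cocharacter of the ``obvious'' $n$-dimensional torus inside ${\rm Aut}_{\Delta_n}(U_{[n]})$ (Remark \ref{rem:n-dim-torus}), and the paper states explicitly in the proof of Theorem \ref{trm:cellular-decomposition} that with this action not all attracting sets are affine. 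Worse for your purposes, even when they are affine they are the \emph{wrong} cells. Take $n=3$, $k=1$, weights $d_1<d_2<d_3$ on $w_1,w_2,w_3$. The attracting set of the minimal necklace $123$, i.e.\ of the fixed point $([w_1],[w_2],[w_3])$, is $\bigl\{\bigl([1:b:bc],[0:1:c],[0:0:1]\bigr)\bigr\}\cong\bC^2$, which is the dense cell of the component $X_3(1,3)$ rather than the $0$-dimensional cell; dually, the attracting set of the maximal necklace $222$ is only $1$-dimensional. So your decomposition, where it is cellular at all, attaches the necklaces to the cells by a scrambled bijection, and the third claim of the theorem (closure order $=$ reversed tnn order) is false for it. Your ``solve the echelon equations triangularly using nilpotency'' step is exactly where this breaks: the conditions ${\rm pr}_iV_i\subseteq V_{i+1}$ inside a product of Schubert cells are not triangular in the echelon parameters for this action. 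The fix, which is the paper's actual move, is to use a cocharacter that separates basis vectors \emph{within} each segment, not just across segments: with the second basis of Definition \ref{dfn:different-relisation-of-U_[n]} the weight increases by $1$ along every arrow of the coefficient quiver, the fixed points become juggling patterns, and cellularity of the attracting sets is then the content of \cite[Theorem~4.13]{Pue2020} (it is a theorem about a specifically adapted action, not about an arbitrary $\bC^*$-action on a string module's quiver Grassmannian, so it cannot be cited for your action).

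Separately, the poset comparison --- which you correctly flag as the main obstacle and only sketch --- is where most of the paper's work lives: Proposition \ref{prop:posetIso} (an explicit induction showing that every combinatorial inequality of necklaces is realized by a chain of mutations, i.e.\ that the geometric closure order is not strictly coarser than the combinatorial one), Lemma \ref{plus 1} (covering relations drop the cell dimension by exactly one, via the height/hole counting of \cite{LaPu2021}), and Theorem \ref{trm:cell-dim-quiver-Grass} identifying $\dim C(\mathcal I)$ with $\ell(f(\mathcal I))$ so that Remark \ref{rem:cell-dim-tnn-Grass} applies. Your plan of ``matching covering relations with one-step moves on necklaces'' is the right idea (these moves are the paper's fundamental mutations), but without the correct torus action the cells whose covering relations you would be analyzing are not the ones in the statement.
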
 

In other words, the (complex) topology of $X(k,n)$ is very close to the (real) topology of the tnn Grassmannian. The realization \eqref{eqn:IsoXknQuiverGrass} allows to use the action of the automorphism group ${\rm Aut}_{\Delta_n}(U_{[n]})$ for the study of the topological properties of $X(k,n)$. We prove:

\begin{trm}
The cells from Theorem \ref{A} coincide with the  ${\rm Aut}_{\Delta_n}(U_{[n]})$ orbits. $X(k,n)$ is equipped with an $(n+1)$-dimensional torus action, which preserves the cellular decomposition. The corresponding moment graph has a combinatorial description in terms of Grassmann necklaces.  
\end{trm}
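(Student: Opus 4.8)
\emph{Step 1: the torus and its fixed locus.} The natural candidate for the $(n+1)$-dimensional torus is $T=\bC^*\times(\bC^*)^n$, where $(\bC^*)^n$ sits inside $\mathrm{Aut}_{\Delta_n}(U_{[n]})$ as a maximal torus — rescaling the $n$ indecomposable summands of $U_{[n]}$, equivalently rescaling each of the $n$ coordinate ``threads'' of $W$ uniformly at all vertices — and the extra $\bC^*$ acts through the canonical isomorphisms $\psi^{(\lambda)}\colon U_{[n]}\xrightarrow{\ \sim\ }(W_\bullet,\lambda\,\mathrm{pr}_\bullet)$. Written diagonally on each $W_i$, the $\psi^{(\lambda)}$ form a one-parameter subgroup of $\prod_i\mathrm{GL}(W_i)$ which descends to an automorphism of $X(k,n)\cong{\rm Gr}_{(k,\dots,k)}(U_{[n]})$ but does \emph{not} lie in $\mathrm{Aut}_{\Delta_n}(U_{[n]})$; being diagonal it commutes with $(\bC^*)^n$, so $T$ acts on $X(k,n)$. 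A point $(V_1,\dots,V_n)$ is $(\bC^*)^n$-fixed iff each $V_i=\langle e_j:j\in J_i\rangle$ is a coordinate subspace, and then it is automatically $\psi^{(\lambda)}$-fixed; the subrepresentation constraint $\mathrm{pr}_iV_i\subseteq V_{i+1}$ becomes $J_i\subseteq J_{i+1}\cup\{i\}$, which is exactly the Grassmann necklace axiom. Hence $X(k,n)^T$ is in canonical bijection with the set of Grassmann necklaces.

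\emph{Step 2: cells via Bia\l ynicki--Birula, and comparison with $\mathrm{Aut}$-orbits.} Choosing a generic (anti)dominant cocharacter $\gamma$ of $T$, I would identify the resulting attracting cells with the cells of Theorem~\ref{A}: this is a direct attracting-set computation once one has, from the proof of Theorem~\ref{A}, an explicit affine parametrisation of each cell $C_{\underline J}$ and its $T$-fixed centre $V^{\underline J}$. For the coincidence with $\mathrm{Aut}_{\Delta_n}(U_{[n]})$-orbits: the group $\mathrm{Aut}_{\Delta_n}(U_{[n]})$ is connected, being the unit group $(\bC^*)^n\ltimes(1+\mathrm{rad})$ of the self-injective Nakayama algebra $\mathrm{End}_{\Delta_n}(U_{[n]})$. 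One checks that $\mathrm{Aut}_{\Delta_n}(U_{[n]})$ normalises the $T$-action (conjugating $\psi^{(\lambda)}$ by an automorphism changes it only by an element of $\mathrm{Aut}_{\Delta_n}(U_{[n]})$), so no $\mathrm{Aut}$-orbit crosses a cell; it then remains to prove that $1+\mathrm{rad}$ acts transitively on each $C_{\underline J}$, which I would do by showing the orbit map $g\mapsto g\cdot V^{\underline J}$ is an isomorphism onto $C_{\underline J}$ — equivalently, that every cell equals the $\mathrm{Aut}$-orbit of its BB-centre — with a dimension count (orbit dimension versus $\dim C_{\underline J}$, both expressed in terms of $\underline J$) closing the argument.

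\emph{Step 3: the moment graph.} The vertices are the Grassmann necklaces (Step~1); the edges are the closures of the one-dimensional $T$-orbits joining two fixed points. I would classify these as the ``elementary'' $T$-invariant curves $[\alpha:\beta]\mapsto V_\bullet^{[\alpha:\beta]}$ obtained by replacing a basis vector $e_a$ by $\alpha e_a+\beta e_b$ in the relevant subspaces, show that such a family is a subrepresentation of $U_{[n]}$ for all $[\alpha:\beta]$ precisely when the two endpoint necklaces $\underline J,\underline J'$ are related by a combinatorially explicit move (swap one element and propagate it cyclically as far as the necklace axioms force), and read off the $T$-weight of the curve from the diagonal weights of $\psi^{(\lambda)}$ and of $(\bC^*)^n$. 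This produces the announced description of the moment graph purely in terms of Grassmann necklaces.

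\emph{Main obstacles.} I expect two points to carry the real content. First, the transitivity of $\mathrm{Aut}_{\Delta_n}(U_{[n]})$ on each cell: the containment of an orbit in a single cell is soft, but equality forces one to match the explicit parametrisation of Theorem~\ref{A} with the action of $1+\mathrm{rad}$ and to do the dimension bookkeeping uniformly over all necklaces. Second, the exhaustiveness of the list of invariant curves for the moment graph — proving that no $T$-invariant curve other than the elementary ones connects two fixed points — for which the natural tool is a local analysis at each fixed point, decomposing the Zariski tangent space of the quiver Grassmannian into $T$-weight spaces and checking that each weight line integrates to one of the listed curves.
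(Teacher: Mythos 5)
Your Step 1 identifies the correct torus: the paper's $T=(\bC^*)^{n+1}$ acts by $(\gamma_0,\gamma_1,\dots,\gamma_n).b_{j,p}=\gamma_0^p\gamma_j\, b_{j,p}$, which is exactly your $(\bC^*)^n$ rescaling the summands $U(j;n)$ together with your extra $\bC^*$ rescaling $\mathrm{pr}$, and the fixed-point/necklace bijection is as you describe. Your Step 3 ``elementary curves'' are the paper's fundamental mutations (Definition~\ref{dfn:mutation}); the exhaustiveness you flag as an obstacle is not re-proved in the paper but imported from \cite[Theorem~6.13]{LaPu2020}, so that part of the statement reduces to the combinatorial translation through the bijection $\psi$ of \eqref{eqn:psi}, which is what Proposition~\ref{prop:MomentGraph} carries out.

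The genuine gap is in Step 2, the claim that each cell is a single ${\rm Aut}_{\Delta_n}(U_{[n]})$ orbit. Your dimension count can only show that the orbit of the Bia\l ynicki--Birula centre is open and dense in its cell (orbits of a connected algebraic group are open in their closures); it cannot exclude that the cell contains further, lower-dimensional orbits, and a priori the isomorphism classes, the orbits and the attracting cells are three different partitions of $X(k,n)$. The missing idea, which is the actual content of the proof of Theorem~\ref{trm:cells-are-strata-part-I}, is module-theoretic rather than geometric: $U_{[n]}$ is an \emph{injective} object in the category of bounded representations of $\Delta_n$ modulo length-$n$ paths (Remark~\ref{rem:bound-quiver-relations}), so any isomorphism between two subrepresentations extends to an automorphism of $U_{[n]}$; hence the ${\rm Aut}_{\Delta_n}(U_{[n]})$ orbits are exactly the strata, i.e.\ the isomorphism classes of subrepresentations. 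One then observes that the coordinate subrepresentations attached to distinct fixed points are pairwise non-isomorphic, while each stratum is a union of attracting cells of mutually isomorphic fixed points and each cell contains exactly one fixed point; this forces cells $=$ strata $=$ orbits. Without this injectivity/extension argument (or some equivalent count showing that the number of orbits equals the number of fixed points), your transitivity claim, which you yourself single out as the main obstacle, remains unproved.
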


Finally, let us list the main algebro-geometric properties of our quiver Grassmannians. 

\begin{trm}
$X(k,n)$ has $\binom{n}{k}$ irreducible components, each of them of dimension $k(n-k)$. Each irreducible component of $X(k,n)$ is desingularized by an explicitely given quiver Grassmannian for an extended cyclic quiver $\widetilde{\Delta_n}$.
\end{trm}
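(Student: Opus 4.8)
The plan is to prove the two assertions separately, using throughout the realization $X(k,n)\cong{\rm Gr}_{(k,\dots,k)}(U_{[n]})$ with $U_{[n]}\cong\bigoplus_{i=1}^{n}U_i^{(n)}$, where $U_i^{(n)}$ is the indecomposable nilpotent $\Delta_n$-module of length $n$ supported at every vertex (equivalently, the indecomposable projective-injective $P_i$), together with Theorem~\ref{A} and the fact recorded above that the cells of $X(k,n)$ are the ${\rm Aut}_{\Delta_n}(U_{[n]})$-orbits. \textbf{Number of components.} A cellular decomposition is an affine paving, so every cell is irreducible and the closure of each cell is the union of the cells below it; hence the irreducible components of $X(k,n)$ are exactly the closures of the maximal cells, and these are pairwise incomparable, so distinct. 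By Theorem~\ref{A} the maximal cells of $X(k,n)$ correspond to the minimal positroid cells of ${\rm Gr}(k,n)_{\ge 0}$. A positroid cell of positive dimension is not closed in the compact space ${\rm Gr}(k,n)_{\ge 0}$, so the minimal positroid cells are precisely the $\binom nk$ torus-fixed points $\bR^{S}$ (equivalently, the constant Grassmann necklaces $(S,\dots,S)$), with $S$ a $k$-element subset of $\{1,\dots,n\}$. Thus $X(k,n)$ has exactly $\binom nk$ irreducible components; denote by $\mathcal X_S=\overline{\mathcal O_S}$ the one whose open cell $\mathcal O_S$ is the ${\rm Aut}_{\Delta_n}(U_{[n]})$-orbit of the coordinate subrepresentation $M_S:=\bigoplus_{j\in S}U_j^{(n)}\subset U_{[n]}$.

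\textbf{Dimension.} Since the cells below $\mathcal O_S$ in the poset have strictly smaller dimension, $\dim\mathcal X_S=\dim\mathcal O_S=\dim{\rm Aut}_{\Delta_n}(U_{[n]})-\dim{\rm Stab}([M_S])$. A direct computation gives $\dim{\rm Hom}_{\Delta_n}(U_i^{(n)},U_j^{(n)})=1$ for all $i,j$; hence $\dim{\rm End}_{\Delta_n}(U_{[n]})=n^2$, and the condition $g(M_S)\subseteq M_S$ on $g\in{\rm End}_{\Delta_n}(U_{[n]})$ is the vanishing of the component of $g$ in ${\rm Hom}_{\Delta_n}(M_S,U_{[n]}/M_S)$, i.e. $k(n-k)$ linear equations. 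Therefore $\dim{\rm Stab}([M_S])=n^2-k(n-k)$ and $\dim\mathcal X_S=k(n-k)$. (Moreover $[M_S]$, which by the poset description lies on no other component, is a smooth point of $X(k,n)$, since $\dim T_{[M_S]}X(k,n)=\dim{\rm Hom}_{\Delta_n}(M_S,U_{[n]}/M_S)=|S|\,|S^{c}|=k(n-k)$.)

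\textbf{Resolution of each component.} Fix $S$. The approach is the one familiar from desingularizations of quiver Grassmannians (in the spirit of Cerulli Irelli--Feigin--Reineke): enlarge the data so as to rigidify the generic subrepresentation type $M_S$, which is projective-injective with uniserial summands. I would introduce the extended cyclic quiver $\widetilde{\Delta_n}$ [obtained from $\Delta_n$ by adjoining the auxiliary vertices and arrows recording a chain of subrepresentations], a $\widetilde{\Delta_n}$-module $\widetilde U_S$ assembled from $U_{[n]}$, and a dimension vector $\widetilde{\mathbf e}_S$, so that $\widetilde X_S:={\rm Gr}_{\widetilde{\mathbf e}_S}(\widetilde U_S)$ is the variety of subrepresentations $N\subset U_{[n]}$ with $\underline\dim N=(k,\dots,k)$ together with extra subrepresentation data (a chain of subrepresentations of prescribed dimension vectors, calibrated to the module $M_S$) that is canonically determined by $N$ whenever $N\cong M_S$. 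The forgetful map $\pi_S\colon\widetilde X_S\to X(k,n)$, induced by restriction of representations, is projective, hence proper, with irreducible image. It then remains to check: (i) $\widetilde X_S$ is smooth of dimension $k(n-k)$, which I expect to obtain by realizing $\widetilde X_S$ as a tower of Grassmann-bundles (constructing the chain one subrepresentation at a time, each choice ranging over a Grassmannian of constant type), or otherwise by verifying ${\rm Ext}^1_{\widetilde{\Delta_n}}(\widetilde N,\widetilde U_S/\widetilde N)=0$ for all relevant $\widetilde N$; (ii) $\pi_S$ is birational onto its image, because over the dense open locus $\{[N]:N\cong M_S\}$ the extra data is uniquely recovered from $N$, so the fibre of $\pi_S$ there is a single point; (iii) ${\rm im}\,\pi_S=\mathcal X_S$, since ${\rm im}\,\pi_S$ is closed, irreducible, contains this dense open subset of $\mathcal X_S$, and has dimension $\le\dim\widetilde X_S=k(n-k)=\dim\mathcal X_S$. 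Together (i)--(iii) exhibit $\pi_S$ as a proper birational morphism from a smooth variety onto $\mathcal X_S$, that is, a resolution of singularities.

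\textbf{Main obstacle.} The crux is choosing $\widetilde{\Delta_n}$, $\widetilde U_S$ and $\widetilde{\mathbf e}_S$ so that (i) and (ii) hold simultaneously: a variety of flags of subrepresentations is singular in general, and only a carefully calibrated decorating datum makes the forgetful tower a genuine tower of Grassmann-bundles while keeping the decoration rigid along the generic locus of $\mathcal X_S$. Balancing these two requirements forces precise control of the dimension vectors in the chain, of the kernels of the quiver maps in the successive quotients of $U_{[n]}$, and of the submodule lattices of the summands $U_j^{(n)}$; this is the technical heart of the argument.
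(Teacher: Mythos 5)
Your argument for the first sentence is correct and complete, and it takes a genuinely different route from the paper. The paper proves Theorem~\ref{trm:irred-comp-and-dim} in one step by citing \cite[Lemma~4.10]{Pue2020}, which already parametrizes the top-dimensional strata of ${\rm Gr}_{\bf k}(U_{[n]})$ by $k$-element subsets of $[n]$ and computes their dimension. You instead read off the component count from the cell poset of Theorem~\ref{A} (maximal cells correspond to minimal, i.e.\ zero-dimensional, positroid cells, of which there are $\binom{n}{k}$) and get the dimension by orbit--stabilizer: $\dim {\rm End}_{\Delta_n}(U_{[n]})=n^2$ and the stabilizer of the point $\bigoplus_{j\in S}U(j;n)$ is the kernel of the surjection ${\rm End}_{\Delta_n}(U_{[n]})\twoheadrightarrow {\rm Hom}_{\Delta_n}\bigl(U_S,U_{[n]}/U_S\bigr)$, a space of dimension $k(n-k)$. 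Both computations are right. One small caveat: knowing that the irreducible components are exactly the closures of the \emph{maximal} cells uses that each cell closure is a union of cells, which is not automatic for an $\alpha$-partition; it does follow from the identification of cells with ${\rm Aut}_{\Delta_n}(U_{[n]})$-orbits (Theorem~\ref{trm:cells-are-strata-part-I}), which you invoke, so the argument closes. Your route buys a conceptual explanation (components of $X(k,n)$ $\leftrightarrow$ tnn fixed points) at the cost of leaning on the poset isomorphism, one of the harder results of the paper; the paper's citation is shorter but opaque.

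For the second sentence there is a genuine gap: the claim is that each component is desingularized by an \emph{explicitly given} quiver Grassmannian for an extended cyclic quiver, and the explicit construction is exactly what your proposal omits. You correctly describe the template (decorate $N\subset U_{[n]}$ with extra data that is rigid over the open cell; smoothness via a tower of Grassmann bundles; properness, birationality over the open cell, image equal to the component), and this is indeed the strategy of Section~\ref{sec:desingularization}. But you leave $\widetilde\Delta_n$, the module, and the dimension vector unspecified and explicitly defer their calibration as ``the technical heart''. Since steps (i)--(iii) cannot even be stated, let alone checked, without these data, the second half of the theorem is not proved.

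For comparison, the paper's choices are: $\widetilde\Delta_n$ has vertices $(i,r)$ with $i\in\bZ_n$, $r\in[n]$, and arrows $\alpha_{i,r}\colon(i,r)\to(i+1,r+1)$ and $\beta_{i,r}\colon(i,r)\to(i,r-1)$; the module is $\widetilde U_{[n]}$ (the extension of the \emph{ambient} module, not of $U_J$), where $\widetilde U_{[n]}^{(i,r)}$ is the image of the length-$(r-1)$ path ending at $i$, the $\alpha$-arrows act by the induced surjections and the $\beta$-arrows by inclusions; and the dimension vector is ${\bf d}_J=\mathbf{dim}\,\widetilde U_J$, i.e.\ $d_J^{(i,r)}=\#\bigl(J\cap\{i,\dots,i+n-r\}\bigr)$. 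Smoothness (Theorem~\ref{issmooth}) comes from a tower of fibrations built by decreasing induction on $r$, the fibre at level $r$ being a product of Grassmannians of the quotients $\widetilde U_{[n]}^{(i,r)}/U_{\beta_{i,r+1}}(N^{(i,r+1)})$; the key point making the tower a genuine bundle is that the $\beta$'s are injective, the $\alpha$'s surjective, and the only constraint at level $r$ reduces to a containment already imposed at level $r+1$. Birationality over $C(p_J)$ holds because there the prescribed dimensions force $N^{(i,r)}$ to equal the image of $N^{(i,1)}$ under the path maps. These are precisely the data your ``main obstacle'' paragraph asks for.
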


\subsection*{Structure of the paper} In Section 1 and 2 we collect background material on tnn Grassmannians/Grassmann necklaces and quiver Grassmannians, respectively. Our main object $X(k,n)$ is introduced in Section 3, where we describe the key isomorphism \eqref{eqn:IsoXknQuiverGrass} and construct the cellular decomposition, as well as the irreducible components. In Section 4 we deal with several tori acting on $X(k,n)$, and in the following section we focus on one of them and give its moment graph. The Poincar\'e polynomial is determined in Section 6. Section 7 is about resolutions of singularities of the irreducible components. Finally, the two appendices are about linear degenerations and the $k=1$ case, respectively.

\subsection*{Acknowledgements} E.F. was partially supported by the grant RSF 19-11-00056. The study has been partially funded within the framework of the HSE University Basic Research Program. M.L. acknowledges the PRIN2017 CUP E8419000480006, as well as the MIUR Excellence Department Project awarded to the Department of Mathematics, University of Rome Tor Vergata, CUP
 E83C18000100006. 

\section{Totally nonnegative Grassmannians and positroids}\label{sec:tnn-Grass}
In this section we briefly recall basic facts on the totally nonnegative Grassmannians following \cite{Lam16}.  We discuss their geometry and combinatorics. In particular, we recall the definition of
the Grassmann necklaces and juggling patterns, which provide a bridge to the theory of quiver Grassmannians.

\subsection{Totally nonnegative Grassmannians}
The totally nonnegative (tnn for short) Grassmannian ${\rm Gr}(k,n)_{\ge 0}$ is the subset of the real Grassmann 
variety 
${\rm Gr}(k,n)$ represented by the subspaces whose Pl\"ucker coordinates have all the same sign. Postnikov \cite{Pos06} defined a cellular decomposition
of ${\rm Gr}(k,n)_{\ge 0}$ as follows. For $L\in {\rm Gr}(k,n)$ and a $k$-element subset $I\subset [n]=\{1,\dots,n\}$ 
let $X_I(L)$ be the $I$-th Pl\"ucker coordinate of $L$. We define 
\[
{\mathcal M}(L)=\left\{I\subset\binom{[n]}{k}:\ X_I(L)\ne 0\right\}.
\]
Then ${\mathcal M}(L)$ is a matroid attached to $L$. If $L\in {\rm Gr}(k,n)_{\ge 0}$, then the matroid ${\mathcal M}(L)$ is called 
a positroid. 

The importance of this notion is explained by the following theorem due to Postnikov \cite{Pos06}. 

\begin{thm}
Let ${\mathcal P}(k,n)$ be the set of positroids. For ${\mathcal M}\in {\mathcal P}(k,n)$ we denote by $\Pi({\mathcal M})\subset {\rm Gr}(k,n)_{\ge 0}$
the set of subspaces $L$ such that ${\mathcal M}(L)={\mathcal M}$. Then each stratum $\Pi({\mathcal M})$, ${\mathcal M}\in {\mathcal P}(k,n)$
is a cell $\bR_{> 0}^s$ ($s$ depends on the positroid ${\mathcal M}$).
\end{thm}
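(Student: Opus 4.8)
The plan is to construct, for each positroid $\mathcal{M}$, an explicit homeomorphism $\phi_{\mathcal{M}}\colon \bR_{>0}^{s}\xrightarrow{\ \sim\ }\Pi(\mathcal{M})$, where $s=s(\mathcal{M})$ is a combinatorial statistic attached to $\mathcal{M}$. First I would fix the matrix bookkeeping: a point of ${\rm Gr}(k,n)$ is the row span of a full-rank $k\times n$ matrix $A$, well defined up to left multiplication by ${\rm GL}_k$, and $L\in{\rm Gr}(k,n)_{\ge 0}$ precisely when $A$ can be chosen with all maximal minors nonnegative; the positroid $\mathcal{M}(L)$ is the set of $k$-subsets $I$ for which the maximal minor $p_I(A)$ on the columns $I$ is nonzero. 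Bringing $A$ into reduced row echelon form with pivot set $I_1$ (the lexicographically minimal basis of $\mathcal{M}$) exhibits $\Pi(\mathcal{M})$ inside an affine chart of ${\rm Gr}(k,n)$ as the locus where a prescribed set of Pl\"ucker coordinates vanishes and the others do not; hence $\Pi(\mathcal{M})$ is a locally closed semialgebraic set, and the content of the theorem is that it is in fact a cell.

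The combinatorial input I would use is Postnikov's dictionary relating positroids, Grassmann necklaces, decorated permutations and Le-diagrams (equivalently, reduced plabic graphs): each positroid $\mathcal{M}$ determines a unique Le-diagram $D_{\mathcal{M}}$ inside the $k\times(n-k)$ rectangle, and I would \emph{define} $s(\mathcal{M})$ to be the number of boxes of $D_{\mathcal{M}}$ carrying a $+$ (equivalently, the number of faces of the associated reduced plabic graph, minus one). From $D_{\mathcal{M}}$ I would build the parametrizing map in one of two equivalent ways: directly, by placing a positive real parameter in each $+$-box and reading off a $k\times n$ matrix via the standard Le-diagram recipe, or via the boundary measurement map of the associated plabic network with positive edge weights. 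The decisive positivity statement --- that such a matrix has all maximal minors $\ge 0$, so represents a point of ${\rm Gr}(k,n)_{\ge 0}$, and that $p_I>0$ exactly when $I\in\mathcal{M}$ --- follows from the Lindstr\"om--Gessel--Viennot lemma: each $p_I$ expands as a sum over families of pairwise non-intersecting directed paths in the planar network, every summand being a monomial in the positive weights, so the sum is nonnegative, and it is strictly positive iff such a path family exists, which is precisely the matroid condition read off from the plabic graph. This gives ${\rm im}(\phi_{\mathcal{M}})\subseteq\Pi(\mathcal{M})$.

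It then remains to show that $\phi_{\mathcal{M}}$ is a bijection onto $\Pi(\mathcal{M})$ with continuous inverse. For injectivity I would recover the parameters (the $+$-entries, or the edge weights up to gauge) as explicit subtraction-free rational functions of the entries of the row echelon representative of $L$; the same formulas show that $\phi_{\mathcal{M}}^{-1}$ is continuous. Surjectivity is the heart of the matter, and I expect it to be the main obstacle: one must show that an arbitrary $k\times n$ matrix with nonnegative maximal minors and support exactly $\mathcal{M}$ lies in the image. I would argue by induction on $n$ (or on the number of boxes of $D_{\mathcal{M}}$), ``peeling off'' the last column: total nonnegativity, together with the last entry $I_n$ of the Grassmann necklace, pins down the last column as a forced nonnegative combination of the others, the remaining submatrix is again totally nonnegative with positroid the Grassmann necklace of the smaller diagram, and the inductive parametrization extends. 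Since $\phi_{\mathcal{M}}$ is manifestly continuous, combining this with continuity of the inverse yields the homeomorphism $\Pi(\mathcal{M})\cong\bR_{>0}^{s(\mathcal{M})}$; taking the disjoint union over all positroids --- which exhausts ${\rm Gr}(k,n)_{\ge 0}$ because every $L$ has a well-defined positroid --- produces the asserted cellular decomposition.
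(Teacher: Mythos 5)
You should first be aware that the paper does not prove this statement at all: it is recalled as a theorem of Postnikov with a citation to \cite{Pos06}, so there is no in-paper argument to compare against. Your proposal is, in outline, exactly the strategy of the cited source: parametrize each positroid stratum by the $+$-boxes of its Le-diagram (equivalently by positive edge weights of a reduced plabic network modulo gauge), use Lindstr\"om--Gessel--Viennot to see that the image lands in ${\rm Gr}(k,n)_{\ge 0}$ with the correct support, and then prove the parametrization is a homeomorphism onto $\Pi({\mathcal M})$. The bookkeeping, the definition of $s({\mathcal M})$ as the number of $+$-boxes, and the positivity/support argument are all correct and standard.

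The genuine gap is the surjectivity step, which you yourself flag as ``the heart of the matter'' and then dispatch with an induction that does not work as stated. The claim that total nonnegativity together with $I_n$ ``pins down the last column as a forced nonnegative combination of the others'' is false in general: the last column is not determined by the rest, and the actual reduction (Postnikov's removal of a boundary leaf, bridge, or lollipop, depending on whether $n$ is a coloop, a loop, or neither in ${\mathcal M}$) subtracts a suitable multiple of a neighboring column and requires one to verify that the resulting $k\times(n-1)$ (or $(k-1)\times(n-1)$) matrix is again totally nonnegative and has exactly the predicted smaller positroid. Verifying these two facts is the real content of the theorem, and nothing in your sketch addresses it; without it you have only shown that $\Pi({\mathcal M})$ \emph{contains} a cell $\bR_{>0}^{s}$, not that it \emph{is} one. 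Relatedly, your injectivity claim that the parameters are subtraction-free rational functions of the matrix entries is true but is itself a nontrivial theorem (Talaska's inversion of the boundary measurement map, or Postnikov's chamber-minor formulas for Le-diagrams), so it should be either proved or cited rather than asserted. As written, the proposal is a faithful roadmap of the known proof but not a proof.
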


Hence one gets a cellular decomposition of the tnn Grassmannian
labeled by the positroids. It is thus natural to ask how to label the positroids ${\mathcal P}(k,n)$  and how to compute the dimension 
of the cell $\Pi({\mathcal M})$ for ${\mathcal M}\in {\mathcal P}(k,n)$. 

\begin{rem}
The cellular decomposition for totally nonnegative part of the flag varieties $G/P$ can be found in \cite{Lus94,Rie99,Rie06}. Postnikov's positroid decomposition
agrees with the general construction.
\end{rem}

\begin{rem}
It is important that we consider only the matroids corresponding to the points of the tnn Grassmannians. The strata corresponding to the
general matroids have much less transparent structure, see \cite{GGMS87} and the discussion in the introduction of \cite{KLS13}. 
\end{rem}

\subsection{Grassmann necklaces}\label{subsec:Grassmann-Necklace}
There are several ways to parametrize  the elements of ${\mathcal P}(k,n)$. The one providing the bridge between the theory of 
tnn Grassmannians and quiver Grassmannians for (equioriented) cyclic quivers is the following one (see \cite{Pos06}). 

\begin{dfn}\label{dfn:Grassmann-Necklace}
A $(k,n)$ Grassmann necklace is a collections $I_1,\dots,I_n$ of subsets of $[n]$ such that $|I_a|=k$ for all $a$ and 
$I_a\subset I_{a+1}\cup \{a\}$ for all $a=1,\dots,n$. The set of $(k,n)$ Grassmann necklaces is denoted by $\mathcal{GN}_{k,n}$.
\end{dfn}

We note that for $a=n$ the last condition is understood as $I_n\subset I_1\cup\{n\}$.
In other words, the condition on the sets $I_a$ can be written as $i\in I_a\setminus \{a\}$ implies
$i\in I_{a+1}$  (which works for $a=n$ as well). 
There is a slightly different version of Definition \ref{dfn:Grassmann-Necklace} 
(see e.g. \cite{KLS13}). Namely, given a collection
$(I_a)_{a\in [n]}\in \mathcal{GN}_{k,n}$ we define $J_a=\{i-a:\ i\in I_a\}$, where $i-a$ is
understood as an element of $[n]$, which is equal to $a-n$ modulo $n$. The resulting collections
$(J_a)_{a\in [n]}$ are called juggling patterns in \cite{KLS13} (modulo the overall change $j\to n+1-j$
of the elements of  $J_a$). Clearly, one can put forward the following definition. 

\begin{dfn}\label{dfn:Juggling-Pattern}
A collection $(J_a)_{a=1}^n$, $J_a\in \binom{[n]}{k}$ is called a juggling pattern if  
$j\in J_a\setminus\{n\}$ implies $j+1\in J_{a+1}$. 
\end{dfn}

\begin{rem}\label{rem:base-choice}
In Theorem~\ref{trm:cellular-decomposition} 
we show that Grassmann necklaces and juggling patterns naturally parametrize the 
torus fixed points in a certain quiver Grassmannian for the cyclic quiver. The two combinatorial
Definitions \ref{dfn:Grassmann-Necklace} and \ref{dfn:Juggling-Pattern} correspond to two natural
choices of basis in the representation space of the quiver (cf. 
Definition \ref{dfn:different-relisation-of-U_[n]}).
\end{rem}

The set of $(k,n)$ Grassmann necklaces can be equipped with a partial order. For two elements $I,J\in\binom{[n]}{k}$ such that $I=(i_1<\dots<i_k)$, $J=(j_1<\dots<j_k)$ 
we write $I\le J$ if $i_u\le j_u$ for all $u \in [k]$. Now for a number $a\in [n]$ we consider the rotated order 
\begin{equation}\label{a-order}
a<_a a+1 <_a \dots <_a n  <_a 1 <_a  \dots <_a a-1
\end{equation}
on the set $[n]$. This order induces the order $<_a$ on the set $\binom{[n]}{k}$.
Now for two $(k,n)$ Grassmann necklaces ${\mathcal I}=(I_1,\dots,I_n)$ and ${\mathcal J}=(J_1,\dots,J_n)$ we write 
${\mathcal I}\le {\mathcal J}$ if $I_a\le_a J_a$ for all $a\in [n]$.
\begin{example}\label{exple:PosetGN13}
Let $k=1$ and $n=3$. Given a Grassmann necklace $(I_1=\{i_1\},I_2=\{i_2\},I_3=\{i_3\})\in\mathcal{GN}_{1,3}$, we represent such an element by $i_1i_2i_3$. In this case the Hasse diagram of the poset $(\mathcal{GN}_{1,3},\leq)$ is the following:

\begin{center}
\begin{tikzpicture}
\node at (0,0) {123};
\node at (0,2.5) {133};
\node at (0,5) {222};
\node at (-4,2.5) {121};
\node at (-4,5) {111};
\node at (4,2.5) {223};
\node at (4,5) {333};

\draw(-3.8,2.3) -- (-.1,0.2);
\draw (3.8,2.3) -- (.1,0.2);
\draw (0,2.3) -- (0,0.2);

\draw (-4,4.8) -- (-4,2.7);
\draw (-3.8,4.8) -- (-0.1,2.7);

\draw(4,4.8) -- (4,2.7);
\draw(3.8,4.8) -- (0.1,2.7);

\draw(0.1,4.8) -- (3.8,2.7);
\draw(-0.1,4.8) -- (-3.8,2.7);
\end{tikzpicture}
\end{center}
\end{example}

Given a positroid ${\mathcal M}\in {\mathcal P}(k,n)$ we define the corresponding Grassmann necklace ${\mathcal I}({\mathcal M})$ by the formula
\[
{\mathcal I}({\mathcal M})_a=\min{_a} \{J\in{\mathcal M}\}, 
\]
where $\min_a$ is the minimum with respect to the order $\le _a$. 

\begin{prop} \cite[Theorem 7.12]{Lam16}
The map ${\mathcal M}\mapsto {\mathcal I}({\mathcal M})$ is an order reversing bijection between the set of $(k,n)$ positroids and the set
of $(k,n)$ Grassmann necklaces. 
\end{prop}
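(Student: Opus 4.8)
The plan is to exhibit $\mathcal{M}\mapsto\mathcal{I}(\mathcal{M})$ as one of a pair of mutually inverse maps between positroids and Grassmann necklaces. For a $(k,n)$ Grassmann necklace $\mathcal{I}=(I_1,\dots,I_n)$ define
\[
\mathcal{M}(\mathcal{I})=\bigl\{J\in\tbinom{[n]}{k}\ :\ I_a\le_a J\ \text{ for all }\ a\in[n]\bigr\}.
\]
I would carry out the proof in four steps: $(0)$ $\mathcal{I}(\mathcal{M})$ is indeed a Grassmann necklace for every positroid $\mathcal{M}$; $(1)$ $\mathcal{I}(\mathcal{M}(\mathcal{I}))=\mathcal{I}$ for every $\mathcal{I}\in\mathcal{GN}_{k,n}$; $(2)$ $\mathcal{M}(\mathcal{I})$ is a positroid and $\mathcal{M}(\mathcal{I}(\mathcal{M}))=\mathcal{M}$ for every positroid $\mathcal{M}$; $(3)$ the resulting bijection reverses the order. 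Steps $(0)$, $(1)$ and $(3)$ are bookkeeping with the rotated Gale orders $\le_a$ and the matroid exchange axiom; the geometry of ${\rm Gr}(k,n)_{\ge 0}$ enters only in step $(2)$.

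For step $(0)$: for any linear order on $[n]$ a rank-$k$ matroid has a unique Gale-minimal basis (run the greedy algorithm), so $\mathcal{I}(\mathcal{M})_a=\min_a\mathcal{M}$ is well defined, and the relation $I_a\subset I_{a+1}\cup\{a\}$ for $I_a:=\mathcal{I}(\mathcal{M})_a$ is the standard consequence of the greedy description of $I_a,I_{a+1}$ together with the observation that $\le_a$ and $\le_{a+1}$ agree off $a$ (which is $\le_a$-smallest and $\le_{a+1}$-largest). For step $(1)$ the crucial combinatorial fact is that the entries of a Grassmann necklace dominate one another cyclically: $I_b\le_b I_a$ for all $a,b\in[n]$. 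It suffices to prove $I_1\le I_a$ in the usual Gale order, i.e.\ $|I_1\cap[m]|\ge|I_a\cap[m]|$ for every $m$: walking the necklace from $I_1$ to $I_a$ shows that for $m\ge a-1$ each step $I_c\to I_{c+1}$ can only decrease $|\,\cdot\,\cap[m]|$, while walking the other way round from $I_a$ through $I_n$ back to $I_1$ shows that for $m<a$ each step can only increase it, and the two ranges of $m$ exhaust $[n]$; the same argument in each rotated order gives the general case. Consequently $I_a\in\mathcal{M}(\mathcal{I})$, and since $I_a\le_a J$ for every $J\in\mathcal{M}(\mathcal{I})$ by definition, $I_a=\min_a\mathcal{M}(\mathcal{I})$, whence $\mathcal{I}(\mathcal{M}(\mathcal{I}))=\mathcal{I}$.

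Step $(2)$ is the substantive part. The inclusion $\mathcal{M}\subset\mathcal{M}(\mathcal{I}(\mathcal{M}))$ is immediate from the definition of $\mathcal{I}(\mathcal{M})$. For the reverse inclusion, and to see that $\mathcal{M}(\mathcal{I})$ is a positroid in the first place, I would invoke Postnikov's combinatorics: each set $\{J:I_a\le_a J\}$ is a cyclic shift of a Schubert (shifted) matroid, every Schubert matroid is a positroid, and a matroid is a positroid exactly when it equals the intersection of the $n$ Schubert matroids read off from its Grassmann necklace. Concretely one attaches to $\mathcal{I}$ a $\Gamma$-diagram, builds from it a totally nonnegative $k\times n$ matrix, and identifies its column matroid with $\mathcal{M}(\mathcal{I})$ via semicontinuity of the Pl\"ucker coordinates. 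This makes $\mathcal{M}\mapsto\mathcal{I}(\mathcal{M})$ surjective, while injectivity is the identity $\mathcal{M}=\mathcal{M}(\mathcal{I}(\mathcal{M}))$; combined with Steps $(0)$ and $(1)$, it follows that $\mathcal{I}(-)$ and $\mathcal{M}(-)$ are mutually inverse bijections $\mathcal{P}(k,n)\leftrightarrow\mathcal{GN}_{k,n}$.

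For step $(3)$, recall that the cell closure order on $\mathcal{P}(k,n)$ is containment of positroids — one direction by semicontinuity of the Pl\"ucker coordinates along the closure, the other by Postnikov's explicit deformations (see also \cite{W05}). If $\mathcal{M}'\subset\mathcal{M}$, then for each $a$ the $\le_a$-minimum over the larger set is $\le_a$ that over the smaller one, so $\mathcal{I}(\mathcal{M})_a\le_a\mathcal{I}(\mathcal{M}')_a$, i.e.\ $\mathcal{I}(\mathcal{M})\le\mathcal{I}(\mathcal{M}')$; conversely $\mathcal{I}(\mathcal{M})\le\mathcal{I}(\mathcal{M}')$ forces $\mathcal{M}'\subset\mathcal{M}$ by Step $(2)$, since $\mathcal{I}\mapsto\mathcal{M}(\mathcal{I})$ is inclusion-reversing (the set $\{J:I_a\le_a J\ \forall a\}$ shrinks as the $I_a$ grow). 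Hence the bijection reverses the order. I expect the main obstacle to be Step $(2)$ — that $\mathcal{M}(\mathcal{I})$ is always a positroid and that every positroid is recovered from its necklace as $\mathcal{M}(\mathcal{I}(\mathcal{M}))$; this can be settled via $\Gamma$-diagrams and plabic graphs, or via Oh's theorem relating positroids to Schubert matroids, both of which one may cite rather than reprove.
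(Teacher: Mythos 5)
The paper offers no proof of this proposition at all: it is imported verbatim as \cite[Theorem 7.12]{Lam16}, so there is nothing internal to compare your argument against. Judged on its own, your sketch is the standard proof from the literature and is correct in its logic. Step (0) (the greedy/Gale-minimal basis argument, valid for any matroid) and Step (1) (the cyclic domination $I_b\le_b I_a$, proved by walking the necklace in both directions and tracking $|\,\cdot\,\cap[m]|$ on the two complementary ranges of $m$) are complete and check out; Step (3) is an immediate formal consequence of Steps (1)--(2) once one fixes that the order on positroids is containment of basis sets, which by semicontinuity of Pl\"ucker coordinates (one direction) and Postnikov's deformation argument (the other) agrees with the cell-closure order used elsewhere in the paper. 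The one substantive point, your Step (2) --- that $\mathcal{M}(\mathcal{I})$ is a positroid and that every positroid is recovered as the intersection of the $n$ cyclically shifted Schubert matroids attached to its necklace --- is not proved but delegated to Postnikov's $\Gamma$-diagram/plabic-graph parametrization and Oh's theorem. That is exactly the black box that Lam's own proof of Theorem 7.12 rests on, so this is an acceptable level of citation for a statement the paper itself only quotes; just be aware that without that input your argument only establishes $\mathcal{M}\subset\mathcal{M}(\mathcal{I}(\mathcal{M}))$ and hence neither injectivity nor surjectivity.
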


\subsection{Bounded affine permutations}\label{subsec:BoundedPerms}
We recall here briefly the definition of bounded affine permutations and their relation with Grassmann necklaces. More details can be found in \cite[\S3]{KLS13} and \cite[\S6]{Lam16}.

Recall that each stratum $\Pi({\mathcal M})$ is a cell $\bR_{> 0}^d$. In order to give a formula for  the dimension $d$ of the cell,
we use one more parametrization via the bounded affine permutations. A $(k,n)$ affine permutation (not yet bounded) is a bijection 
$f:\bZ\to\bZ$ satisfying the following properties:
\begin{itemize}
    \item $f(i+n)=f(i)+n$ for all $i\in\bZ$,
    \item $\sum_{i=1}^n (f(i)-i)=kn$.
    \end{itemize}
In particular, there is a distinguished $(k,n)$ affine permutation ${\rm id}_k$ given by ${\rm id}_k(i)=i+k$. 
The length of an affine permutation is defined as 
\[
l(f)=|\{(i,j)\in[n]\times\bZ:\ i<j \text{ and } f(i)>f(j)\}|.
\]
We note that the set of $(0,n)$ affine permutations is a group isomorphic to the affine Weyl group $W_n$ of type $A_{n-1}^{(1)}$.
For general $k=1,\dots,n$ the group $W_n$ acts freely and transitively on the set of $(k,n)$ affine permutations; the action of the permutation 
$s_i=(i,i+1)\in W_n$, for $i=0,\dots,n-1$ permutes the values $f(i+rn)$ and $f(i+rn+1)$ for all $r\in\bZ$. This allows to identify
the set of $(k,n)$ affine permutations with $W_n$ by sending $w\in W_n$ to $w.{\rm id}_k$. We thus obtain an order $\le$
on the set of $(k,n)$ affine permutations coming from the Bruhat order on $W_n$. For example, the unique minimal element is 
${\rm id}_k$. 

A $(k,n)$ bounded affine permutation is a $(k,n)$ affine permutation subject to the extra condition
\[
i\le f(i)\le i+n \text{ for all }  i\in\bZ.
\]
We denote the set of $(k,n)$ bounded affine permutations by ${\mathcal B}_{k,n}$.
\begin{example}
The Grassmann necklace corresponding to ${\rm id}_k$ is the one defined by
\[
I_a=(a, a+1, \ldots, a+k-1) \qquad (a\in[n]).
\]
\end{example}

It is shown in \cite{KLS13} that ${\mathcal B}_{k,n}$
is a lower order ideal in the set of  $(k,n)$  affine permutations (unbounded). For $f,g\in {\mathcal B}_{k,n}$ we write $f\le g$ for 
the induced order.
The Grassmann necklace ${\mathcal I}(f)=(I_1,\dots,I_n)$ for $f\in {\mathcal B}_{k,n}$ is defined by the formula
\[
I_a=\{f(b):\ b<a \text{ and } f(b)\ge a\} \mod n.
\]
By \cite[Theorem 6.2]{Lam16}, this defines an order preserving bijection between the set ${\mathcal B}_{k,n}$ and the set of $(k,n)$ Grassmann necklaces. 

In the opposite direction, given a Grassmann necklace ${\mathcal I}$ we define the bounded affine permutation
$f=f({\mathcal I})$ as follows: if $a\notin I_a$, then $f(a)=a$. If $a\in I_a$, then 
$I_{a+1}=I_a\setminus\{a\}\cup\{b\}$. We define $f(a)=c$, where $b\equiv c \mod n$ and $a<c\le a+n$.   
\begin{rem}\label{rem:cell-dim-tnn-Grass}
Let ${\mathcal M}(f)$ be the positroid defined by $f\in {\mathcal B}_{k,n}$, that is the one obtained from $\mathcal{I}(f)$ as explained in \S\ref{subsec:Grassmann-Necklace}. Then (see e.g. \cite{Lam16})
\begin{itemize}
    \item $\dim \Pi_{{\mathcal M}(f)}=k(n-k)-l(f)$,
    \item the closure of the cell $\Pi_{{\mathcal M}(f)}$ contains $\Pi_{{\mathcal M}(g)}$ if and only if $f\ge g$.    
\end{itemize}
\end{rem}

\begin{rem}
There is a complex version of the cellular decomposition of the tnn Grassmannians. Namely, one can define 
the stratification of the complex Grassmann varieties into the positroid varieties \cite{KLS13}. The latter ones 
 are not isomorphic to affine cells in general, but they are irreducible complex projective  algebraic varieties with many nice properties.  
\end{rem}

\section{Quiver Grassmannians for cyclic quivers}\label{sec:quiver-Grass}
In this short section we recall some definitions and results concerning quiver Grassmannians and discuss the equiorented cycle case. Later, we will relate certain quiver Grassmannians for the cycle to totally non-negative Grassmannians.
\subsection{Quivers and Representations}
A finite quiver $Q$ consists of a finite set of vertices $Q_0$ and a finite set of arrows $Q_1$. Each $\alpha \in Q_1$ has a unique source and target $i,j \in Q_0$ and we write $(\alpha: i \to j)$. A finite dimensional $Q$ representation $M$ is a pair of tuples $(M^{(i)})_{i \in Q_0}$ and $(M_\alpha)_{\alpha \in Q_1}$, where each $M^{(i)}$ is a finite dimensional $\mathbb{C}$-vector space and each $M_\alpha$ is a linear map from $M^{(i)}$ to $M^{(j)}$. The notion of a subrepresentation will be fundamental to us: a tuple of finite dimensional $\bC$-vector spaces $N=(N^{(i)})_{i \in Q_0}$ is a subrepresentation of $M=((M^{(i)})_{i\in Q_0},(M_\alpha)_{\alpha\in Q_1})$ if $N^{(i)}\subset M^{(i)}$ for all $i\in Q_0$ and  $M_\alpha N^{(i)} \subset N^{(j)}$ for all $\alpha:i\to j \in Q_1$.

On the other hand, it might be useful to define a subrepresentation as a subobject in the  appropriate quiver representation category. For this we need the notion of $Q$ morphism: a morphism from the $Q$ representation $M$ to the $Q$ representation $N$ is a tuple of linear maps $\varphi = (\varphi_i)_{i\in Q_0} \in \prod_{i \in Q_0} \mathrm{Hom}_\mathbb{C}(M^{(i)},N^{(i)})$ such that: 

\begin{center}
\begin{tikzpicture}

	\draw[arrows={-angle 90}, shorten >=9, shorten <=9]  (-.1,0) -- (-.1,-1.6);
	\draw[arrows={-angle 90}, shorten >=9, shorten <=13]  (0,0) -- (1.6,0);
	\draw[arrows={-angle 90}, shorten >=9, shorten <=9]  (1.6,0) -- (1.6,-1.6);
	\draw[arrows={-angle 90}, shorten >=9, shorten <=13]  (0,-1.6) -- (1.6,-1.6);
	
	\node at (0,0) {$M^{(i)}$};
	\node at (0,-1.6) {$M^{(j)}$};
    \node at (1.7,0) {$N^{(i)}$};
   \node at (1.7,-1.6) {$N^{(j)}$};
    
    \node at (-.42,-.8) {$M_{\alpha}$};
	\node at (0.8,.3) {$\varphi_{i}$};
    \node at (2.0,-.8) {$N_{\alpha}$};
    \node at (0.8,-1.9) {$\varphi_{j}$};
    
    \node[rotate=45] at (.8,-.8) {$\equiv$};
	 
\end{tikzpicture}
\end{center}

By $\mathrm{Hom}_Q(M,N)$ we denote the set of all $Q$ morphisms from $M$ to $N$. The category of finite-dimensional $Q$ representations over $\mathbb{C}$ is denoted by $\mathrm{rep}_\mathbb{C}(Q)$. Now $N \in \mathrm{rep}_\mathbb{C}(Q)$ is a subrepresentation of $M \in \mathrm{rep}_\mathbb{C}(Q)$ if $\mathrm{Hom}_Q(N,M)$ contains a $Q$ monomorphism, i.e. $N$ is a subobject of $M$. 
The dimension vector of $M \in \mathrm{rep}_\mathbb{C}(Q)$ is 
\[ \mathbf{d} := \big( \dim_\mathbb{C} M^{(i)} \big)_{i \in Q_0} \in \mathbb{Z}_{\geq 0}^{Q_0}. \]
\begin{dfn}
For $M \in \mathrm{rep}_\mathbb{C}(Q)$ and $\mathbf{e} \in \mathbb{Z}^{Q_0}$, the quiver Grassmannian $\mathrm{Gr}_\mathbf{e}(M)$ is the variety of all subrepresentations of $M$ whose dimension vector equals $\mathbf{e}$.
\end{dfn}
\subsection{The path algebra of a quiver}\label{sec:path-algebra}
A path $p$ in a quiver $Q$ is a concatenation of consecutive arrows. We define the source of a path as the source of its first arrow and its target is the target of the last arrow. The path algebra $\mathbb{C}Q$ has all paths in $Q$ as basis and the multiplication $*$ of two paths $p$ and $p'$ is defined via concatenation: If the target of $p$ is the source of $p'$, then $p'*p:= p' \circ p$. Otherwise the product is zero.

Using paths in $Q$ we can define a set of relations $R$ on the objects of $\mathrm{rep}_\mathbb{C}(Q)$. Let $I \subset \mathbb{C}Q$ be the ideal generated by the relations in $R$. Then there is an equivalence of categories between $\mathrm{rep}_\mathbb{C}(Q,I)$, i.e. representations satisfying the relatations in $I$ (called bounded quiver representations) and $\mathrm{mod}_\mathbb{C}(\mathbb{C}Q/I)$, i.e. modules over the so called bounded path algebra \cite[Theorem~5.4]{Schiffler2014}. 

The injective bounded representation $I_k$ at the vertex $k \in Q_0$ consists of the vector spaces $V^{(j)}$ for $j \in Q_0$ with a basis indexed by equivalence classes of paths (in $\mathbb{C}Q/I$) from $j$ to $k$. The linear map $V_\alpha$ along the arrow $(\alpha : i \to j) \in Q_1$ sends a basis element of $V^{(i)}$ indexed by the equivalence class of a path $p$ to a basis element of $V^{(j)}$ which is indexed by an other equivalence class with representative $p'$ such that $p = \alpha \circ p'$. A basis element of $V^{(i)}$ is send to zero by the map along the arrow $(\alpha : i \to j)$ if the paths in the equivalence class indexing this basis element do not factor through $\alpha$.

\subsection{The equioriented Cycle}\label{sec:equi-cycle}
Let $\Delta_n$ denote the equioriented cycle on $n$ vertices, where the orientation is chosen in such a way that $1\to 2$ is an arrow. Then the set of vertices and the set of arrows are both in bijection with $\mathbb{Z}_n := \mathbb{Z}/n \mathbb{Z}$. Unless specified differently, we consider all indices of vertices and arrows modulo $n$. Given a representation $M\in\mathrm{rep}_\mathbb{C}(\Delta_n)$, we 
write $M_{\alpha_i}$ for $M_{i\to i+1}$ for any $i\in\bZ_n$.

Every point $M$ of the affine variety
\[ {\rm R}_\mathbf{n}(\Delta_n):= \bigoplus_{i \in \mathbb{Z}_n} \mathrm{Hom}_\mathbb{C}(\mathbb{C}^n,\mathbb{C}^n)\]
parametrizes a $\Delta_n$ representation and each $g \in {\rm G}_\mathbf{n} := \prod_{i \in \mathbb{Z}_n} GL_n(\mathbb{C})$ acts on ${\rm R}_\mathbf{n}(\Delta_n)$ via conjugation
\[ g.M := \big( g_{i+1} M_{\alpha_i} g_i^{-1} \big)_{i \in \bZ_n}.\]
 The automorphism group $\mathrm{Aut}_{\Delta_n}(M)$ of a $\Delta_n$ representation $M \in {\rm R}_\mathbf{n}(\Delta_n)$ is its stabilizer in ${\rm G}_\mathbf{n}$.

Let $M$ be a $Q$ representation. A basis of $M$ is  a basis $B$ of the underlying vector space $\bigoplus_{i\in\ Q_0}M^{(i)}$. In the case of $Q=\Delta_n$, we will always pick bases $B$ of $M$ compatible with the $\bZ_n$-grading on $\bigoplus_{i\in \bZ_n}M^{(i)}$: $B=\bigcup_{i\in\bZ_n} B^{(i)}$, where $B^{(i)}=\{w_1^{(i)}, \ldots w_{d_i}^{(i)}\}$ is a basis for $M^{(i)}$.

\begin{dfn}\label{dfn:coefficient-quiver}
Let $M \in \mathrm{rep}_\mathbb{C}(\Delta_n)$ and $B$ be a basis of $M$. The coefficient quiver $Q(M,B)$ consists of: 
\begin{itemize}
\item[(QM0)] the vertex set  $Q(M,B)_0=B$,
\item[(QM1)] the set of arrows $Q(M,B)_1$, containing $\big(\alpha: w_k^{(i)} \to w_\ell^{(i+1)}\big)$ if and only if the coefficient of $w_\ell^{(i+1)}$ in $M_{\alpha_i} w_k^{(i)}$ is non-zero.
\end{itemize}
\end{dfn}
\begin{example}\label{exple:UinCoeffQuiver}
Let $U(i;n)$ be the $\Delta_n$ representation given by 
\[
U(i;n)^{(j)}=\bC, \qquad U(i;n)_{\alpha_j}=\left\{
\begin{array}{ll}
 \mathrm{id}_{\bC}    &  \hbox{ if }j\neq i,\\
0     & j=i.
\end{array}
\right.
\]
 If we denote by $w^{(j)}$ a generator of $U(i;n)^{(j)}$ (that is, any non zero element), then the corresponding coefficient quiver is just an equioriented type $A_n$ Dynkin quiver:
\[
w^{(i+1)}\to w^{(i+2)}\to\ldots\to w^{(i-1)}\to w^{(i)}.
\]
\end{example}
Given a fixed basis $B$ of $M\in \mathrm{rep}_\mathbb{C}(\Delta_n)$, a grading of $M$ is simply a map $\mathrm{wt} : B \to \mathbb{Z}^B$. This induces a $\mathbb{C}^*$ action on the vector spaces of $M$, defined on the basis $B$ as follows and then extended by linearity: 
\begin{equation}\label{rem:C*-action}
    \lambda.b := \lambda^{\mathrm{wt}(b)}\cdot b\qquad (b\in B, \lambda\in\bC^*).
\end{equation} 

With some additional assumptions about the grading (see \cite[Section~5.1]{LaPu2020}), the $\mathbb{C}^*$ action extends to the quiver Grassmannian $\mathrm{Gr}_\mathbf{e}(M)$ with finitely many fixed points 
\[ \{ L_1, \dots, L_m \} =: \mathrm{Gr}_\mathbf{e}(M)^{\mathbb{C}^*}\]
indexed by appropriate subquivers of $Q(M,B)$ (see \cite[Proposition 1]{Cerulli2011}). Moreover, the $\bC^*$ action   
 induces an $\alpha$-partition of $\mathrm{Gr}_\mathbf{e}(M)$ into the attracting sets of the fixed
 points
\[
W_L := \big\{ U \in \mathrm{Gr}_\mathbf{e}(M) : \lim_{\lambda \to  0} \lambda.U = L  \big\}, \]
i.e. there exists a total order on the fixed point set such that $\bigsqcup_{i=1}^s W_{L_i} $ is closed in $\mathrm{Gr}_\mathbf{e}(M)$ for all $s \in [m]$.
To prove the existence of a cellular decomposition of the quiver Grassmannian it remains to show that all the $W_L$'s are isomorphic to affine spaces.

\section{The main object}\label{sec:main-object}
In this section, we identify $X(k,n)$ from the introduction with a certain quiver Grassmannian and apply methods from representation theory of quivers to investigate its geometric properties.

For convenience, we start by recalling the definition of $X(k,n)$. Let $W$ be an $n$-dimensional $\bC$-vector space and let $(e_1, \ldots, e_n)$ be a basis, then 
\[
X(k,n)=
\left\{
(V_i)\in \prod_{i\in \bZ_n}{\rm Gr}_k(W)\mid {\rm pr}_i V_i\subseteq V_{i+1}
\right\},\]
where ${\rm Gr}_k(W)$ denotes the usual Grassmann variety of $k$-dimensional subspaces of $W$, and the projection  morphisms are defined as ${\rm pr}_i(e_j)=e_j$ for any $j\neq i$ and ${\rm pr}_i(e_i)=0$ for all $i\in\bZ_n$.

Moreover, recall $U(i;n)$ from Example \ref{exple:UinCoeffQuiver}, i.e. the $\Delta_n$ representation which is one-dimensional over each vertex $i \in \mathbb{Z}_n$ and the map along the arrow $j \to j+1$ is the identity for $j \neq i$ and zero for $i \to i+1$. 

The following result tells us that the variety $X(k,n)$ can be realized as a quiver Grassmannian for a very special $\Delta_n$ representation. Namely, let  $$U_{[n]}=\bigoplus_{i \in \mathbb{Z}_n} U(i;n).$$

\begin{prop}\label{prop:Jugg-as-quiver-grass}
Let $k,n \in \mathbb{N}$ with $k < n$
and let ${\bf k}=(k,\dots,k) \in \mathbb{Z}^n$.
Then
\[
X(k,n) \cong {\rm Gr}_{\bf k}(U_{[n]}).
\]
\end{prop}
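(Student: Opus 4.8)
The plan is to exhibit an explicit bijection between the data defining a point of $X(k,n)$ and the data defining a $\mathbf k$-dimensional subrepresentation of $U_{[n]}$, and to check that this bijection is an isomorphism of varieties (not just of sets). The key observation is that the underlying vector space of $U_{[n]}$ at vertex $i$ is $\bigoplus_{j\in\bZ_n} U(j;n)^{(i)}$, an $n$-dimensional space, with a distinguished basis given by the generators $w_j^{(i)}$ of the summands $U(j;n)$; I would fix an identification of $U_{[n]}^{(i)}$ with $W$ sending $w_j^{(i)}\mapsto e_{?}$, and the point is to choose this labelling so that the map $(U_{[n]})_{\alpha_i}\colon U_{[n]}^{(i)}\to U_{[n]}^{(i+1)}$ becomes exactly the projection ${\rm pr}_i$ (or a fixed relabelling of it). Concretely, the summand $U(j;n)$ contributes the identity along $\alpha_i$ whenever $i\neq j$ and zero when $i=j$; so if I label the generator of $U(j;n)^{(i)}$ by the index that "remembers $j$" in a way independent of $i$, then $(U_{[n]})_{\alpha_i}$ kills exactly one basis vector — the one coming from $U(i;n)$ — and fixes the others, which is precisely the behaviour of ${\rm pr}_i$ on the basis $(e_1,\dots,e_n)$ after a cyclic relabelling. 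I would make this relabelling explicit once and for all (this is the bookkeeping that Remark~\ref{rem:base-choice} and Definition~\ref{dfn:different-relisation-of-U_[n]} are alluding to), and thereby identify $\bigoplus_{i\in\bZ_n}U_{[n]}^{(i)}$ with $\bigoplus_{i\in\bZ_n}W$ intertwining $(U_{[n]})_{\alpha_i}$ with ${\rm pr}_i$.

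Granting this identification, a subrepresentation $N\subseteq U_{[n]}$ with dimension vector $\mathbf k=(k,\dots,k)$ is by definition a tuple $(N^{(i)})_{i\in\bZ_n}$ with $N^{(i)}\subseteq U_{[n]}^{(i)}\cong W$, $\dim N^{(i)}=k$, and $(U_{[n]})_{\alpha_i}N^{(i)}\subseteq N^{(i+1)}$, i.e. ${\rm pr}_i N^{(i)}\subseteq N^{(i+1)}$. Setting $V_i:=N^{(i)}$ this is literally the defining condition of $X(k,n)$, so the sets of points coincide. The second step is to upgrade this to an isomorphism of varieties: both sides are closed subvarieties of $\prod_{i\in\bZ_n}{\rm Gr}_k(W)$ (for the quiver Grassmannian this is the standard fact that $\mathrm{Gr}_{\mathbf e}(M)$ is closed in $\prod_i \mathrm{Gr}_{e_i}(M^{(i)})$ cut out by the incidence conditions $M_\alpha N^{(i)}\subseteq N^{(j)}$), and the identification above is an isomorphism of the ambient products that carries one set of incidence conditions to the other; hence it restricts to an isomorphism $X(k,n)\xrightarrow{\ \sim\ }{\rm Gr}_{\mathbf k}(U_{[n]})$.

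The only genuinely fiddly point — and the one I would treat most carefully — is pinning down the index relabelling so that $(U_{[n]})_{\alpha_i}$ matches ${\rm pr}_i$ on the nose rather than up to an $i$-dependent permutation of coordinates. Because the generator of $U(j;n)^{(i)}$ is naturally "transported" along the path from $i+1$ around to $i$, its natural index shifts with $i$, whereas ${\rm pr}_i$ always kills the fixed coordinate $e_i$; reconciling these is exactly a choice of trivialization of each $U_{[n]}^{(i)}$, and one must check the chosen trivialization is consistent around the whole cycle (no monodromy obstruction, since each $U(j;n)$ has a one-dimensional, hence canonically trivialized up to scalar, space at every vertex, and $k<n$ guarantees all the relevant spaces are genuine). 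Everything else — closedness of both subvarieties, and the fact that an isomorphism of ambient varieties respecting the defining equations restricts to an isomorphism of the subvarieties — is formal. I expect the write-up to consist mainly of setting up notation for the two bases (the "Grassmann necklace" basis and the "juggling pattern" basis of Remark~\ref{rem:base-choice}) and then a one-line verification that the incidence relations correspond.
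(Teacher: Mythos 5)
Your proof is correct and follows essentially the same route as the paper: both identify $U_{[n]}$ with the representation whose maps are the projections ${\rm pr}_i$ by indexing the basis vector of $U_{[n]}^{(i)}$ coming from the summand $U(j;n)$ by $j$ (independently of the vertex $i$), after which the defining incidence conditions of the quiver Grassmannian and of $X(k,n)$ coincide verbatim. Your extra care about the scheme-theoretic (rather than merely set-theoretic) identification and about the consistency of the trivialization around the cycle is sound but not a point where the paper's one-paragraph argument needs repair.
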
 

\begin{proof}
By definition, $U(i;n)$ is isomorphic to the representation $V$ with $V^{(j)} = \mathbb{C}$ for all $j \in \mathbb{Z}_n$ and $V_{\alpha_j} = {\rm id}_\mathbb{C}$ for all $j \in \mathbb{Z}_n$ with $j \neq i$ and $V_{\alpha_i} = 0$. Hence the vector spaces of $U_{[n]}$ over the vertices of $\Delta_n$ are all $n$ dimensional and with a suitable order of the direct summands of $U_{[n]}$ we obtain
\[ U_{[n]} \cong  M := \Big( \big( M^{(i)} =\mathbb{C}^n\big)_{i \in \mathbb{Z}_n}, \big( M_{\alpha_i} = {\rm pr}_i \big)_{i \in \mathbb{Z}_n} \Big).\]
where ${\rm pr}_i$ sends the $i$-th basis vector of $\mathbb{C}^n$ to zero and preserves the remaining. 
This implies ${\rm Gr}_{\bf k}(U_{[n]}) \cong {\rm Gr}_{\bf k}(M)$ and the desired isomorphism follows from the definition of $X(k,n)$.
\end{proof}

\begin{rem}\label{rem:bound-quiver-relations}
$U_{[n]}$ is a representation for the bounded quiver $\Delta_n$ with the relation that all length $n$ loops vanish. Let $I$ be the ideal of the path algebra $\mathbb{C} \Delta_n$ generated by all paths of length $n$, then we can view $U_{[n]}$ as a module over the bounded path algebra $\mathbb{C} \Delta_n/I$ (cf. \cite[§~2.2, Proposition~4.1]{Pue2020}). 
\end{rem}

\begin{dfn}\label{dfn:nilpot-rep}
We call $M \in {\rm rep}_\mathbb{C}(\Delta_n)$ nilpotent if all concatenations of the maps of $M$ along cyclic paths vanish beyond a certain length of the paths.
\end{dfn}
\subsection{The Automorphism Group of $U_{[n]}$} The explicit realization of the group ${\rm Aut}_{\Delta_n}(U_{[n]})$ as  subgroup of ${\rm G}_\mathbf{n}$ depends on the basis of $U_{[n]}$. 

There are two special bases which we use throughout this paper:
\begin{dfn}\label{dfn:different-relisation-of-U_[n]}
\begin{enumerate}
    \item The first basis is compatible with the choice made in the proof of Proposition \ref{prop:Jugg-as-quiver-grass}: for any $i\in\bZ_n$ we set
    \[
    B^{(i)}=\big\{b_1^{(i)}, \ldots, b_n^{(i)}\big\}
    \]
    so that $(U_{[n]})_{\alpha_i}(b_j^{(i)})=\left\{
    \begin{array}{ll}
    b_j^{(i+1)}     &\hbox{ if }j\neq i,  \\
    0     & \hbox{ if }j=i.
    \end{array}\right.$ 
    
    We will borrow notation from Proposition \ref{prop:Jugg-as-quiver-grass} and write ${\rm pr}_i$ for $(U_{[n]})_{\alpha_i}$ with respect to the above basis.
    This basis will allow us to relate $X(k,n)$ to Grassmann necklaces.
    \item By rearranging the previous basis vectors\footnote{More precisely, we reorder any set $\{b_1^{(i)}, \ldots b_n^{(i)}\}$  decreasingly with respect to the shifted total order $\leq_i$ \eqref{a-order}}, we get
    \[
    B^{(i)}=\{v_1^{(i)}, \ldots, v_n^{(i)}\}
    \]
    and with respect to this basis
    we have  \[(U_{[n]})_{\alpha_i}(v_j^{(i)})=\left\{
    \begin{array}{ll}
    v_{j+1}^{(i+1)}     &\hbox{ if }j\neq n,  \\
    0     & \hbox{ if }j=n.
    \end{array}\right. .\] We denote this morphism by $s_1$.
    This basis will allow us to relate $X(k,n)$ to juggling patterns. From now on we will work with this basis most of the time.
\end{enumerate}
  \end{dfn}

Observe that the choice of a basis corresponds to a certain realization of $U_{[n]}$ as a point in ${\rm R}_\mathbf{n}(\Delta_n)$. 

\begin{rem}\label{secondrealization}
The second realization of  $U_{[n]}$ from the definition above leads to the following  realization of $X(k,n)$ (juggling patterns style). Let $W'$ be an $n$-dimensional $\bC$-vector space and let $(v_1, \ldots, v_n)$ be a basis of $W'$, then 
\[
X(k,n)=
\left\{
(V_i)\in \prod_{i\in \bZ_n}{\rm Gr}_k(W')\mid s_1(V_i)\subseteq V_{i+1}
\right\},\]
where $s_1(v_j)=v_{j+1}$ for any $j\neq n$ and $s_1(v_n)=0$.
\end{rem}

If $M \in {\rm R}_\mathbf{n}(\Delta_n)$, then its endomorphism algebra ${\rm End}_{\Delta_n}(M)$ is defined as the set of matrix tuples $ E = (E_i)_{i \in \mathbb{Z}_n} \prod_{i \in \mathbb{Z}_n} {\rm M}_n(\mathbb{C})$ such that
\[
E_{i+1}M_i = M_i E_i \quad \mathrm{for} \ \mathrm{all} \ i \in \mathbb{Z}_n. 
\]

\begin{prop}\label{prop:endomorphism-algebra}
With respect to the basis $\bigcup_{i\in\bZ_n}\{v_1^{(i)}, \ldots, v_n^{(i)}\}$, the elements of the endomorphism algebra ${\rm End}_{\Delta_n}(U_{[n]})$
 are exactly the matrix tuples $ E = (E_i)_{i \in \mathbb{Z}_n}$ with
\[ 
E_i = \begin{pmatrix}
e^{(i)}_{1,1} & & & &\\
e^{(i)}_{2,1}& e^{(i-1)}_{1,1} & & &\\
\vdots & \vdots & \ddots& &\\
e^{(i)}_{n-1,1} & e^{(i-1)}_{n-2,1} & \hdots & e^{(i-n+2)}_{1,1} &\\
e^{(i)}_{n,1} & e^{(i-1)}_{n-1,1} & \hdots & e^{(i-n-2)}_{2,1} & e^{(i-n+1)}_{1,1}
\end{pmatrix} 
\]
where $e^{(i)}_{k,1} \in \mathbb{C}$ for all $i \in \mathbb{Z}_n$, $k \in [n]$. In particular, $\dim_\bC {\rm End}_{\Delta_n}(U_{[n]})=n^2$.
\end{prop}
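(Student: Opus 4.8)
The plan is to turn the commutation relations defining $\mathrm{End}_{\Delta_n}(U_{[n]})$ into a transparent system of scalar equations and then solve it. With respect to the basis $\bigcup_{i\in\bZ_n}\{v_1^{(i)},\dots,v_n^{(i)}\}$ from Definition~\ref{dfn:different-relisation-of-U_[n]}(2), every structure map $(U_{[n]})_{\alpha_i}=s_1$ is represented by one and the same matrix $N\in\mathrm{M}_n(\bC)$, the nilpotent down-shift with $N_{j+1,j}=1$ for $1\le j\le n-1$ and all other entries zero. So I would first record that a tuple $E=(E_i)_{i\in\bZ_n}\in\prod_{i\in\bZ_n}\mathrm{M}_n(\bC)$ lies in $\mathrm{End}_{\Delta_n}(U_{[n]})$ if and only if $E_{i+1}N=NE_i$ for all $i\in\bZ_n$.

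Next I would expand this entrywise. Since right multiplication by $N$ shifts columns and left multiplication by $N$ shifts rows, comparing the $(k,l)$ entries of $E_{i+1}N$ and $NE_i$ produces three families of relations: (i) the first row of each $E_i$ vanishes outside position $(1,1)$; (ii) the last column of each $E_i$ vanishes outside position $(n,n)$; (iii) the recursion $(E_{i+1})_{k,l+1}=(E_i)_{k-1,l}$ for $2\le k\le n$ and $1\le l\le n-1$. Iterating (iii) in the direction of decreasing column index yields $(E_i)_{k,l}=(E_{i-l+1})_{k-l+1,1}$ whenever $k\ge l$; iterating it when $k<l$ lands after finitely many steps in the first row at a column index $\ge 2$, which vanishes by (i), so each $E_i$ is lower triangular. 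Setting $e^{(j)}_{m,1}:=(E_j)_{m,1}$ for $j\in\bZ_n$ and $m\in[n]$, this is exactly the matrix shape in the statement. For the converse I would check directly that every tuple of that shape satisfies (i), (ii) and (iii); here (ii) is automatic because for $m<n$ the entry $(E_i)_{m,n}$ sits strictly above the diagonal. Consequently the scalars $\{e^{(j)}_{m,1}: j\in\bZ_n,\ m\in[n]\}$ form an unconstrained coordinate system on $\mathrm{End}_{\Delta_n}(U_{[n]})$, whence $\dim_\bC\mathrm{End}_{\Delta_n}(U_{[n]})=n^2$. As a cross-check, the dimension also drops out of the decomposition $U_{[n]}=\bigoplus_{i\in\bZ_n}U(i;n)$ once one verifies the elementary fact that $\dim_\bC\mathrm{Hom}_{\Delta_n}(U(i;n),U(j;n))=1$ for all $i,j\in\bZ_n$, summed over the $n^2$ ordered pairs.

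The computations above are entirely routine; the only delicate point is the index bookkeeping modulo $n$, and specifically the verification that the cyclic structure forces no hidden ``wrap-around'' relation among the $e^{(j)}_{m,1}$. This is where I expect the main (mild) obstacle to lie: one must observe that, because the column index $l$ ranges only over $[n]$, the chain of implications coming from (iii) and starting at the first column of some $E_j$ terminates at the last column of $E_{j+n-1}$, where relation (ii) is vacuously satisfied and nothing is propagated back to $E_j$. Hence all $n^2$ parameters remain genuinely free, and the count is exactly $n^2$.
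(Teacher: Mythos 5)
Your proposal is correct and follows essentially the same route as the paper: expand $E_{i+1}s_1=s_1E_i$ entrywise into the first-row/last-column vanishing conditions plus the diagonal recursion, iterate to get lower-triangularity and the stated constant-along-shifted-diagonals form, and count the $n^2$ free first-column parameters. Your explicit converse check and the observation that the column index never wraps around (so no hidden relation among the $e^{(j)}_{m,1}$) are welcome additions that the paper leaves implicit.
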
 

\begin{proof}
By definition of ${\rm End}_{\Delta_n}(U_{[n]})$, we have that $(E_i)_{i\in\bZ_n}$ if and only if
\[ E_{i+1} s_1 = s_1 E_i \quad \mathrm{for} \ \mathrm{all} \ i \in \mathbb{Z}_n, \]
that is 
\begin{equation}\label{eqn:AutGp}
    E_{i+1} s_1(v_l^{(i)}) = s_1 E_i(v_l^{(i)}) \quad \mathrm{for} \ \mathrm{all} \ i,l \in \mathbb{Z}_n.
\end{equation}  
Let us we write $e^{(i)}_{k,l}:= (E_i)_{k,l}$, so that $E_i(v_l^{(i)})=\sum_{k=1}^n e_{k,l}^{(i)}v_k^{(i)}$.
It is then easy to see that the equations \eqref{eqn:AutGp} are equivalent to 
\[
e^{(i)}_{k,l} = e^{(i+1)}_{k+1,l+1},\quad e^{(i)}_{k,n}= 0, \quad e^{(i)}_{n,l}= 0, \qquad \mathrm{for} \ \mathrm{all } \ k,l \in [n-1].
\]
From the previous equations, it follows by induction on $n-l$ that $e_{k,l}^{(i)}=0$ for any $l>k$, and by induction on $l$ that  $e_{k,l}^{(i)}=e_{k+1,l+1}^{(i+1)}$.
 This implies that the $E_i$'s are of the desired form.
\end{proof}

\begin{rem}
We obtain ${\rm Aut}_{\Delta_n}(U_{[n]}) \subset {\rm End}_{\Delta_n}(U_{[n]})$ by the condition $e^{(i)}_{1,1} \neq 0$ for all $i \in \mathbb{Z}_n$, because $E_{i+1} s_1 = s_1 E_i$ is equivalent to $s_1 = E_{i+1} s_1 E_i^{-1}$ if the matrices $E_i$ are invertible. 
\end{rem}

\begin{rem}We will see in \S\ref{sec:T-action} that the automorphism group of $X(k,n)$ is larger than the automorphism group of $U_{[n]}$. 
\end{rem}

\subsection{Geometric Properties of the Main Object}We prove here, by applying quiver representation theory results,  geometric properties of $X(k,n)$.
\begin{thm}\label{trm:irred-comp-and-dim} Let  $k,n \in \mathbb{N}$ with $k < n$ then:
\begin{enumerate}
\item $X(k,n)$ has $\binom{n}{k}$ irreducible components;
\item $X(k,n)$ is equidimensional of dimension $k(n-k)$.
\end{enumerate}
\end{thm}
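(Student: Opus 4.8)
The plan is to use the quiver-Grassmannian realization $X(k,n)\cong {\rm Gr}_{\bf k}(U_{[n]})$ from Proposition~\ref{prop:Jugg-as-quiver-grass} together with the structure of $U_{[n]}$ as a direct sum of the indecomposable nilpotent $\Delta_n$-modules $U(i;n)$, each of dimension vector $(1,\dots,1)$. The key general principle I would invoke is that for a nilpotent representation $M$ of the equioriented cycle (equivalently, a module over the bounded path algebra $\mathbb{C}\Delta_n/I$, cf. Remark~\ref{rem:bound-quiver-relations}), the quiver Grassmannian decomposes geometrically according to the Caldero--Chapoton/Schofield-type data; more concretely, I would first establish a convenient elementary parametrization of subrepresentations of $U_{[n]}$ of dimension vector $\bf k$.

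First I would fix the basis $\bigcup_{i\in\bZ_n}\{v_1^{(i)},\dots,v_n^{(i)}\}$ from Definition~\ref{dfn:different-relisation-of-U_[n]}(2), where the maps act by $s_1(v_j^{(i)})=v_{j+1}^{(i+1)}$ for $j\neq n$ and $s_1(v_n^{(i)})=0$. A subrepresentation $N=(N^{(i)})$ with $\dim N^{(i)}=k$ is a tuple of $k$-dimensional subspaces $N^{(i)}\subseteq \mathbb{C}^n$ with $s_1(N^{(i)})\subseteq N^{(i+1)}$. I would record the flag of subspaces $N^{(i)}\cap \ker(s_1|_{\cdots})$ obtained by iterating $s_1$; because $s_1^n=0$ and $s_1$ is, up to the kill-at-$v_n$ feature, a shift, the combinatorics of where each $N^{(i)}$ "lives" is governed by a $k$-element subset — this is exactly how juggling patterns / Grassmann necklaces enter (and is the content of the cellular decomposition referenced in Theorem~\ref{A}). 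The upshot I want is: the irreducible components are indexed by the "generic" combinatorial type of such a subrepresentation, and there are $\binom{n}{k}$ of them. Concretely, to a generic subrepresentation one associates the isomorphism type of $N$ as a $\Delta_n$-module, which is a direct sum of $k$ of the indecomposables $U(i;n)$ (with multiplicity); a multiset of size $k$ from $\{1,\dots,n\}$ with the constraint that it actually embeds — and one checks that precisely the $k$-element subsets (i.e. multiplicity-free multisets, $\binom{n}{k}$ of them) occur as the generic type on an irreducible locus, while higher-multiplicity types sit in the closures.

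For the dimension and equidimensionality statement (2), I would compute, for a subrepresentation $N\subseteq U_{[n]}$ of dimension vector $\bf k$, the dimension of the quiver Grassmannian at $N$ via the standard tangent-space bound: $\dim_N {\rm Gr}_{\bf k}(U_{[n]}) \le \dim {\rm Hom}_{\Delta_n}(N, U_{[n]}/N)$, with equality on the smooth/generic locus, and for the cycle with relations this is computed by an Euler-form expression $\langle \mathbf{k}, \mathbf{n}-\mathbf{k}\rangle$ using the bounded path algebra. For $U_{[n]}=\bigoplus_i U(i;n)$ and generic $N=\bigoplus_{i\in S}U(i;n)$ with $S$ a $k$-subset, I would compute ${\rm Hom}_{\Delta_n}(U(i;n),U(j;n))$ explicitly (it is $1$-dimensional for every pair $i,j$, since between any two of these "almost-uniform" modules there is exactly a one-parameter family of morphisms — this matches $\dim {\rm End}_{\Delta_n}(U_{[n]})=n^2$ from Proposition~\ref{prop:endomorphism-algebra}), so that $\dim{\rm Hom}(N,U_{[n]}/N)=k(n-k)$ independently of $S$. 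To get equidimensionality I also need that there is no $\mathrm{Ext}$ correction, i.e. that the generic $N$ on each component has ${\rm Ext}^1_{\Delta_n}(N,U_{[n]}/N)$ contributing nothing to the dimension of the stratum beyond $k(n-k)$ — or, more cleanly, I would invoke that the cells in the cellular decomposition (Theorem~\ref{A}) are affine spaces and read off that the top-dimensional ones all have dimension $k(n-k)$, one in each of the $\binom{n}{k}$ components; alternatively, use that $X(k,n)$ is a linear degeneration of the ordinary Grassmannian ${\rm Gr}(k,\mathbb{C}^n)$ (mentioned in the appendix) and a flatness/semicontinuity argument bounds $\dim X(k,n)\le k(n-k)$ from above while each component, being a closure of a $k(n-k)$-dimensional locus of subrepresentations of fixed generic type, has dimension exactly $k(n-k)$.

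The main obstacle I anticipate is the counting of irreducible components: showing that each $k$-subset $S$ gives a genuinely distinct component (the closures are not contained in one another) and that there are no further components coming from non-generic isomorphism types. For this I would argue that the locus $\mathcal{C}_S=\{N : N\cong \bigoplus_{i\in S}U(i;n)\}$ is irreducible (it is a homogeneous space / a vector-bundle-type stratum: once the isomorphism type is fixed, the set of embeddings $N\hookrightarrow U_{[n]}$ modulo automorphisms is irreducible, being a quotient of an open subset of an affine space of homomorphisms), that its closure has dimension $k(n-k)$, and that the union of the $\overline{\mathcal{C}_S}$ over all $\binom{n}{k}$ subsets $S$ is all of $X(k,n)$ because any subrepresentation degenerates from (equivalently, is a limit of) one of generic type — the latter via an explicit $\mathbb{C}^*$ or $\mathrm{Aut}_{\Delta_n}(U_{[n]})$ deformation sending a given $N$ into the open stratum $\mathcal{C}_S$ for the appropriate $S$. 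Incomparability of the closures then follows because each $\mathcal{C}_S$ contains a point (e.g. the torus-fixed point labeled by the Grassmann necklace $I_a=(a,a+1,\dots,a+k-1)$-type associated to $S$) not lying in any $\overline{\mathcal{C}_{S'}}$ with $S'\neq S$, which can be checked on the level of which Plücker-type coordinates vanish. Everything else (the $\mathrm{Hom}$-computations, the tangent space bound) is routine linear algebra over $\mathbb{C}\Delta_n/I$.
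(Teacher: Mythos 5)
Your main line is sound, but be aware that the paper's own proof is a one\-line citation: \cite[Lemma~4.10]{Pue2020} already states that the irreducible components of ${\rm Gr}_{\bf k}(U_{[n]})$ are the closures of the strata of the subrepresentations $U_J=\bigoplus_{j\in J}U(j;n)$, $J\in\binom{[n]}{k}$, each of dimension $k(n-k)$. What you propose is essentially an unpacking of that lemma, and the ingredients you use do check out: $U(i;n)$ is the projective indecomposable at the vertex $i+1$ over the bounded path algebra $\mathbb{C}\Delta_n/I$ of Remark~\ref{rem:bound-quiver-relations}, so ${\rm Hom}_{\Delta_n}(U(i;n),U(j;n))\cong U(j;n)^{(i+1)}\cong\mathbb{C}$ for all $i,j$ (consistent with $\dim{\rm End}_{\Delta_n}(U_{[n]})=n^2$ from Proposition~\ref{prop:endomorphism-algebra}); by \cite[Lemma~2.4]{CFR12} the stratum $\mathcal{C}_S$ is irreducible of dimension $\dim{\rm Hom}(U_S,U_{[n]})-\dim{\rm End}(U_S)=kn-k^2=k(n-k)$; and incomparability of the $\binom{n}{k}$ closures follows already from equality of these dimensions (a $d$\-dimensional irreducible stratum cannot lie in the boundary $\overline{\mathcal{C}_{S'}}\setminus\mathcal{C}_{S'}$, which has dimension $<d$), so your Pl\"ucker\-coordinate check is unnecessary.

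Two caveats. First, discard the ``alternatively'' via flatness/semicontinuity: the degeneration of ${\rm Gr}(k,n)$ to $X(k,n)$ is \emph{not} flat (this is proved in Appendix~\ref{sec:linear-degenenerations}), upper semicontinuity of fiber dimension bounds the special fiber from \emph{below} rather than above, and Example~\ref{ex:degenerations} exhibits linear degenerations of ${\rm Gr}(k,n)$ whose dimension is not $k(n-k)$ --- so no soft degeneration argument can supply the needed upper bound. Second, the one genuinely substantive step you still owe is exactly the content of the cited lemma: that \emph{every} stratum --- i.e.\ every isomorphism class $\bigoplus_a U(i_a;\ell_a)$ of a nilpotent subrepresentation of dimension vector $\bf k$ --- lies in the closure of some multiplicity\-free stratum $\mathcal{C}_S$, or at least satisfies $\dim{\rm Hom}(N,U_{[n]})-\dim{\rm End}(N)\le k(n-k)$ with equality only for the $U_S$. (Note $\dim{\rm Hom}(N,U_{[n]})=\sum_a\ell_a=kn$ for every such $N$, so this reduces to the clean inequality $\dim{\rm End}(N)\ge k^2$.) Your sketch of an explicit one\-parameter deformation into the generic stratum is the right idea, but as written it is a plan rather than a proof; either carry out that deformation or verify the ${\rm End}$\-inequality directly.
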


\begin{proof}
By \cite[Lemma~4.10]{Pue2020}, the irreducible components of $X(k,n)$ are parametrized by the set
\[ \left\{ p = (p_i)_{i \in \mathbb{Z}_n} \in \prod_{i \in \mathbb{Z}_n} \{ 0,1 \} \ : \ \sum_{i \in \mathbb{Z}_n} p_i = k \right\} \]
and they are all of dimension $k(n-k)$. The above set is in bijection with $\binom{[n]}{k}$, i.e. the set containing all $k$-element subsets of $[n] := \{1,\dots,n\}$.
\end{proof}
\begin{example}
Let $k=1$ and $n=3$. Observe that in this case 
\[
X(1,3)\simeq X:=\left\{
\left(
\begin{bmatrix}
a_i\\b_i\\c_i
\end{bmatrix}
\right)\in\prod_{i\in\bZ_3}\mathbb{P}^2\mid {\rm pr}_i\left(\begin{matrix}
a_i\\b_i\\c_i
\end{matrix}\right)\in\bC\left(\begin{matrix}
a_{i+1}\\b_{i+1}\\c_{i+1}
\end{matrix}\right), \ i\in\bZ_3
\right\}
\]
The three irreducible components are
\[
X_i=\left\{
\left(
\begin{bmatrix}
a_i\\b_i\\c_i
\end{bmatrix}
\right)\in X\mid \begin{bmatrix}
a_i\\b_i\\c_i
\end{bmatrix}=[b^{(i)}_i]
\right\}\qquad (i\in\bZ_3)
\]
where $[b^{(i)}_i]$, according to Definition \ref{dfn:different-relisation-of-U_[n]}(1), denotes the class of the $i$-th standard basis vector of $\bC^3$. For example,
\begin{align*}
    X_1&=
\left\{
    \left(
\begin{bmatrix}
1\\0\\0
\end{bmatrix},\begin{bmatrix}
a_2\\b_2\\c_2
\end{bmatrix},\begin{bmatrix}
a_3\\0\\c_3
\end{bmatrix}
\right)\in\prod_{i\in\bZ_3}\mathbb{P}^2\mid a_2c_3-a_3c_2=0 
\right\}.
\end{align*}
We see immediately that $X_1$ is a projective variety of dimension 2.
\end{example}

\begin{thm}\label{trm:cellular-decomposition}For $k,n \in \mathbb{N}$ with $k < n$ the following holds:  
\begin{enumerate}
\item $X(k,n)$ admits a cellular decomposition;
\item The cells are naturally labeled by the $(k,n)$ Grassmann necklaces.
\end{enumerate}
\end{thm}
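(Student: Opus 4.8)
The plan is to construct the cellular decomposition via the $\mathbb{C}^*$-action machinery recalled at the end of Section~\ref{sec:equi-cycle}, applied to the representation $U_{[n]}$ equipped with the second basis from Definition~\ref{dfn:different-relisation-of-U_[n]}(2). First I would fix the coefficient quiver $Q(U_{[n]}, B)$: since $U_{[n]} = \bigoplus_{i\in\mathbb{Z}_n} U(i;n)$ and each $U(i;n)$ has coefficient quiver an equioriented type $A_n$ path (by Example~\ref{exple:UinCoeffQuiver}), the coefficient quiver of $U_{[n]}$ is a disjoint union of $n$ such paths, one starting at each vertex of $\Delta_n$. Next I would choose a grading $\mathrm{wt}: B \to \mathbb{Z}^B$ satisfying the hypotheses of \cite[Section~5.1]{LaPu2020} so that the induced $\mathbb{C}^*$-action on $\mathrm{Gr}_{\mathbf k}(U_{[n]})$ has finitely many fixed points; by \cite[Proposition~1]{Cerulli2011} these are indexed by the subquivers $L$ of $Q(U_{[n]},B)$ which are themselves coefficient quivers of $\mathbf k$-dimensional subrepresentations. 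The first substantive step is a bijection: such subquivers correspond exactly to choosing, in each of the $n$ paths, a ``tail'' segment (a connected terminal piece, possibly empty or full) of appropriate total size, and I would check that recording which basis vectors $v_j^{(a)}$ over each vertex $a$ belong to $L$ produces, for each $a$, a $k$-subset $J_a$ of $[n]$, and that the subrepresentation condition $s_1(L^{(a)}) \subseteq L^{(a+1)}$ translates precisely into $j \in J_a \setminus \{n\} \Rightarrow j+1 \in J_{a+1}$ — i.e. $(J_a)_a$ is a juggling pattern in the sense of Definition~\ref{dfn:Juggling-Pattern}, hence (via the dictionary in \S\ref{subsec:Grassmann-Necklace}) a $(k,n)$ Grassmann necklace. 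This gives part (2) at the level of fixed points.

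For part (1), I would invoke the $\alpha$-partition into attracting cells $W_L = \{U : \lim_{\lambda\to 0}\lambda.U = L\}$ guaranteed by the $\mathbb{C}^*$-action, and then prove each $W_L$ is isomorphic to an affine space. The standard route here — and the one I expect the paper to take — is to show $W_L$ is cut out inside an affine chart of the quiver Grassmannian by linear equations, or more robustly to apply the criterion of \cite{LaPu2020} (resp.\ \cite{Cerulli2011}) that for representations of bounded type with a suitable grading the attracting sets are affine spaces; here one uses Remark~\ref{rem:bound-quiver-relations}, namely that $U_{[n]}$ is a module over the bounded path algebra $\mathbb{C}\Delta_n/I$ with $I$ generated by length-$n$ paths, and that $U_{[n]}$ is nilpotent in the sense of Definition~\ref{dfn:nilpot-rep}. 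The combinatorial count of the dimension of $W_L$ can be extracted from the coefficient quiver as in \cite[Proposition~1]{Cerulli2011}: it is the number of ``relevant'' pairs of a vertex outside $L$ mapping, along some path in $Q(U_{[n]},B)$, into a vertex of $L$ — but for the existence statement we only need that it is \emph{some} affine space, so I would not grind through the dimension formula here.

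The main obstacle I anticipate is verifying the technical hypotheses on the grading: one must exhibit an explicit $\mathrm{wt}$ that simultaneously (a) separates the fixed points (finiteness), (b) makes the $\alpha$-partition a genuine filtration by closed subsets, and (c) guarantees the attracting cells are affine — condition (c) is where the bounded/nilpotent structure of $U_{[n]}$ is essential, since for a general representation the attracting sets need not be affine spaces. A clean choice is a weight that is constant on each of the $n$ paths of the coefficient quiver but takes $n$ distinct values on the $n$ paths, refined so as to break remaining ties; checking that this lands in the class treated by \cite[Section~5.1]{LaPu2020} is the crux. Once that is in place, combining the $\alpha$-partition with the affineness of each $W_L$ yields the cellular decomposition, and the fixed-point bijection of the first paragraph labels the cells by $\mathcal{GN}_{k,n}$, completing both parts. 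I would also remark that the two basis choices in Definition~\ref{dfn:different-relisation-of-U_[n]} are exactly what yields the two parametrizations (Grassmann necklaces versus juggling patterns), as flagged in Remark~\ref{rem:base-choice}.
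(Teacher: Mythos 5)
Your overall strategy --- parametrizing the $\mathbb{C}^*$-fixed points via \cite[Proposition~1]{Cerulli2011}, matching them with juggling patterns and hence Grassmann necklaces, and then deducing affineness of the attracting sets from the machinery of \cite{Pue2020}/\cite{LaPu2020} --- is exactly the route the paper takes, and your first paragraph (the fixed-point combinatorics, which settles part (2)) is correct. The problem sits precisely at the point you yourself identify as the crux: the choice of grading. The weight you propose, constant on each of the $n$ segments of $Q(U_{[n]},B)$ and taking $n$ distinct values across segments, is (up to relabelling the values) exactly the grading $\mathrm{wt}(b_j^{(i)})=j$ attached to the \emph{first} basis of Definition~\ref{dfn:different-relisation-of-U_[n]}: the segment of the summand $U(j;n)$ consists of the vectors $b_j^{(i)}$, $i\in\mathbb{Z}_n$, so this weight is constant equal to $j$ on that segment. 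The paper uses that grading only to label the fixed points by Grassmann necklaces and then explicitly discards it for part (1), on the grounds that with it not all attracting sets are affine spaces. More to the point for your argument: \cite[Theorem~4.13]{Pue2020} and the framework of \cite[Section~5.1]{LaPu2020} require the weight to increase by a fixed positive amount along every arrow of the coefficient quiver, whereas a segment-wise constant weight has weight difference $0$ along every arrow, so the result you want to invoke simply does not apply. Your hedge of ``refining to break remaining ties'' does not repair this: for the action to descend to the quiver Grassmannian at all, the weight difference must be constant over each arrow of $\Delta_n$ (see the proof of the Lemma in Section~4), and any such refinement of a segment-wise constant grading still has zero weight difference along all arrows of $Q(U_{[n]},B)$.

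The fix is the one the paper makes: keep the weight function $\mathrm{wt}(v_j^{(i)})=j$ but read it in the \emph{second} basis, where $v_j^{(i)}$ is the $j$-th vertex along its segment. Then the weight records position along the segment rather than which segment one is on, the weight difference along every arrow of the coefficient quiver equals $1$, the fixed points are the juggling patterns of Definition~\ref{dfn:Juggling-Pattern} (equivalently Grassmann necklaces, via the dictionary of \S\ref{subsec:Grassmann-Necklace}), and \cite[Theorem~4.13]{Pue2020} applies verbatim to show that every attracting set is an affine space, which is part (1). So the architecture of your proof is right, but the specific grading you put forward at the decisive step is the one known to fail, and without replacing it the appeal to the affineness criterion is unjustified.
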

\begin{proof}
Let $B = \cup_{i \in \mathbb{Z}_n} B^{(i)}$ be the first basis of $U_{[n]}$  
 from Definition \ref{dfn:different-relisation-of-U_[n]}. 
 We fix the weight function ${\rm wt}(b^{(i)}_j):= j$ for all $i \in \mathbb{Z}_n$ and all $j \in [n]$. Hence by \cite[Proposition~1]{Cerulli2011}, the $\mathbb{C}^*$ fixed points are parametrized by the elements of 
\[ \left\{ I = (I_j)_{j \in \mathbb{Z}_n} \in  \prod_{i \in \mathbb{Z}_n} \binom{[n]}{k} \ : \ I_j \setminus\{j\} \subset I_{j+1} \ \mathrm{for} \ \mathrm{all} \ j \in \mathbb{Z}_n \right\}. \]
This set coincides 
 with the set $\mathcal{GN}_{k,n}$ of Grassmannian necklaces.

With this $\mathbb{C}^*$ action not all attracting sets of the fixed points are isomorphic to affine spaces. For this reason we switch to the second basis 
of $U_{[n]}$ from 
Definition \ref{dfn:different-relisation-of-U_[n]}
but keep the same weight function. Observe that now the weight difference along each arrow of the coefficient quiver is one whereas it was zero for the first choice of a basis of $U_{[n]}$.
Now by 
\cite[Proposition~1]{Cerulli2011}, the fixed points of the induced $\mathbb{C}^*$ action are exactly the juggling patterns as in Definition~\ref{dfn:Juggling-Pattern}.
Using the second  
 basis of $U_{[n]}$ and the $\mathbb{C}^*$ action described above, the first part is a special case of \cite[Theorem~4.13]{Pue2020}.
\end{proof}

\begin{rem}
The two different bases from Definition \ref{dfn:different-relisation-of-U_[n]} lead to the parametrization of the cells via 
 $(k,n)$ Grassmann necklaces and juggling patterns, respectively.
\end{rem}


For a point $U \in {\rm Gr}_{\bf k}(M)$ the isomorphism class $\mathcal{S}_U$ in the quiver Grassmannian is called stratum and is irreducible by \cite[Lemma~2.4]{CFR12}.

\begin{rem}\label{strata}
The closures of the top-dimensional strata are the irreducible components of $X(k,n)={\rm Gr}_{\bf k}(U_{[n]})$ and by \cite[Lemma~4.10]{Pue2020} these strata have the representatives 
\[ U_J = \bigoplus_{j \in J} U(j;n) \quad \mathrm{for} \ J \in \binom{[n]}{k}. \]
\end{rem}

\begin{prop}\label{prop:irred-comp-are-quiver-Grass}
For all $J \in \binom{[n]}{k}$, the irreducible component $X_J(k,n):=\overline{\mathcal{S}_{U_J}}$ of $X(k,n)$ is isomorphic to the quiver Grassmannian ${\rm Gr}_{\bf k}(M_J)$ where $M_J := {\rm End}_{\Delta_n}(U_{[n]}).U_J$. 
\end{prop}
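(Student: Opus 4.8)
The plan is to identify the irreducible component $X_J(k,n)=\overline{\mathcal{S}_{U_J}}$ with the quiver Grassmannian ${\rm Gr}_{\bf k}(M_J)$ where $M_J={\rm End}_{\Delta_n}(U_{[n]}).U_J$, by showing that $M_J$ is precisely the subrepresentation of $U_{[n]}$ generated by the copy $U_J=\bigoplus_{j\in J}U(j;n)$ under the action of the endomorphism algebra, and that ${\rm Gr}_{\bf k}(M_J)$ is irreducible of the right dimension with $\mathcal{S}_{U_J}$ as a dense subset. First I would describe $M_J$ concretely: using the basis $\bigcup_i\{v_1^{(i)},\dots,v_n^{(i)}\}$ and Proposition~\ref{prop:endomorphism-algebra}, the subspace $M_J^{(i)}\subseteq U_{[n]}^{(i)}$ is the span of the images of the basis vectors of $U_J^{(i)}$ under all $E\in{\rm End}_{\Delta_n}(U_{[n]})$; one checks this is a genuine $\Delta_n$-subrepresentation (it is closed under the structure maps $s_1$ because $\mathrm{End}$ commutes with them) and compute its dimension vector, which should be constant equal to some ${\bf m}=(m,\dots,m)$ with $m\ge k$. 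Since $\mathbf{k}\le\mathbf{m}$ componentwise and $M_J\subseteq U_{[n]}$, we get a closed embedding ${\rm Gr}_{\bf k}(M_J)\hookrightarrow{\rm Gr}_{\bf k}(U_{[n]})=X(k,n)$.

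Next I would argue that the image of this embedding is exactly $X_J(k,n)$. The stratum $\mathcal{S}_{U_J}$ consists of all subrepresentations $N\subseteq U_{[n]}$ with $N\cong U_J$; every such $N$ is of the form $g.U_J$ for $g$ in the automorphism/endomorphism group action in an appropriate sense, hence $N\subseteq M_J$, so $\mathcal{S}_{U_J}\subseteq{\rm Gr}_{\bf k}(M_J)$. As ${\rm Gr}_{\bf k}(M_J)$ is closed in $X(k,n)$, it contains $\overline{\mathcal{S}_{U_J}}=X_J(k,n)$. For the reverse inclusion, I would invoke \cite[Lemma~4.10]{Pue2020} together with Remark~\ref{strata}: the top-dimensional strata have representatives $U_J$ and their closures are the irreducible components, so $\dim X_J(k,n)=k(n-k)$; it then suffices to show ${\rm Gr}_{\bf k}(M_J)$ is irreducible of dimension $k(n-k)$, whence the two closed irreducible subvarieties of equal dimension with one containing the other must coincide. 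Irreducibility and the dimension count for ${\rm Gr}_{\bf k}(M_J)$ should follow by recognizing $M_J$ as (a direct sum built from) the relevant indecomposable nilpotent $\Delta_n$-representations and applying the same machinery from \cite{Pue2020} that gave Theorem~\ref{trm:irred-comp-and-dim}, now applied to the representation $M_J$ rather than $U_{[n]}$.

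The main obstacle I anticipate is the explicit determination of $M_J$ and the verification that ${\rm Gr}_{\bf k}(M_J)$ is irreducible of dimension exactly $k(n-k)$: one must pin down which indecomposable summands (or which non-split extension) $M_J$ decomposes into, check that $\mathbf{k}$ is a subrepresentation-realizable dimension vector for $M_J$, and confirm that the generic subrepresentation of dimension $\mathbf{k}$ in $M_J$ is isomorphic to $U_J$ so that $\mathcal{S}_{U_J}$ is dense. This is where the representation-theoretic input from \cite{Pue2020, CFR12} is essential; once $M_J$ is understood, the identification $X_J(k,n)={\rm Gr}_{\bf k}(M_J)$ is a formal consequence of the closedness of the embedding, the density of $\mathcal{S}_{U_J}$, and matching of dimensions.
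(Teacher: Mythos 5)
Your route is genuinely different from the paper's: the paper identifies the coefficient quiver of $M_J$ explicitly (as the full subquiver of $Q(U_{[n]},B)$ on vertices with indices at least those of $U_J$), invokes \cite[Theorem~4.13]{Pue2020} to get cellular decompositions of both $\overline{\mathcal{S}_{U_J}}$ and ${\rm Gr}_{\bf k}(M_J)$, and checks cell by cell that the two are cut out by the same equations, so the isomorphism is literally the identity in coordinates. Your first half is correct and arguably cleaner: every $N\cong U_J$ inside $U_{[n]}$ lies in $M_J$ (either because strata are ${\rm Aut}_{\Delta_n}(U_{[n]})$-orbits, or more directly because $U_{[n]}$ is injective as a bounded representation, so the composite $U_J\xrightarrow{\sim}N\hookrightarrow U_{[n]}$ extends to an endomorphism of $U_{[n]}$), whence $X_J(k,n)=\overline{\mathcal{S}_{U_J}}\subseteq {\rm Gr}_{\bf k}(M_J)$ since the latter is closed. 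A small factual slip: the dimension vector of $M_J$ is \emph{not} constant in general (e.g.\ for $n=3$, $J=\{1\}$ it is $(1,3,2)$); this does not hurt the embedding, which only needs $\dim M_J^{(i)}\ge k$, guaranteed by $U_J\subseteq M_J$.

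The genuine gap is the reverse inclusion. You reduce it to ``${\rm Gr}_{\bf k}(M_J)$ is irreducible of dimension $k(n-k)$'' and say this ``should follow'' from the machinery behind Theorem~\ref{trm:irred-comp-and-dim}, i.e.\ \cite[Lemma~4.10]{Pue2020}. But that lemma is stated for the specific representation $U_{[n]}=\bigoplus_i U(i;n)$, whereas $M_J$ is a direct sum of indecomposables of varying lengths; it does not apply off the shelf. Worse, since ${\rm Gr}_{\bf k}(M_J)$ is a union of closures of strata and each stratum closure is irreducible by \cite[Lemma~2.4]{CFR12}, proving irreducibility amounts to showing that \emph{every} stratum of ${\rm Gr}_{\bf k}(M_J)$ lies in $\overline{\mathcal{S}_{U_J}}$ --- which, given your first half, is exactly the statement being proved. (Note that exhibiting $\mathcal{S}_{U_J}$ as the unique stratum of maximal dimension is not enough: a lower-dimensional stratum could still fail to lie in its closure and produce a second component.) So as written the final step is close to circular. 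To close it you would need an actual argument --- e.g.\ the paper's cell-by-cell coordinate comparison, or a degeneration argument showing every $\mathbf{k}$-dimensional subrepresentation of $M_J$ deforms into $\mathcal{S}_{U_J}$, or a fibration/dimension analysis of ${\rm Gr}_{\bf k}(M_J)$ in the spirit of Theorem~\ref{issmooth}.
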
 
\begin{proof}
We consider here the second basis $\bigcup_{i\in\bZ_n}\{v_1^{(i)}, \ldots, v_n^{(i)}\}$ of $U_{[n]}$ from Definition \ref{dfn:different-relisation-of-U_[n]}. The coefficient quiver of each $M_J$ with respect to this basis is hence the full subquiver of $Q(U_{[n]},B)$ on the vertices with indices greater or equal to the indices of $U_J$ viewed as subsegments in $Q(U_{[n]},B)$. This is a direct consequence of the structure of ${\rm End}_{\Delta_n}(U_{[n]})$ as described in Proposition~\ref{prop:endomorphism-algebra}. Each ${\rm Gr}_{\bf k}(M_J)$ admits a cellular decomposition by \cite[Theorem~4.13]{Pue2020}. Now we compute that all cells in $\overline{\mathcal{S}_{U_J}}$ are described by the same equations as the corresponding cells of ${\rm Gr}_{\bf k}(M_J)$. Hence with this explicit coordinate description the desired isomorphism is just the identity map. 
\end{proof} 

\begin{rem}\label{rem:non-smoothness-irreducible components}
In general, the quiver Grassmannian $X_J(k,n)$ is not smooth. For example consider the case $n=5$, $k=3$ and $J = \{2,4,5\}$. Then $U_0 := \oplus_{i \in \mathbb{Z}_n} U(i;3)$ embeds into $M_J$ and is of the right dimension but 
\[ \dim_{\mathbb{C}} {\rm T}_{U_0}X_J(3,5) = \dim_\mathbb{C} {\rm Hom}_{\Delta_n}(U_0, N) = 8 > \dim_\mathbb{C} X_J(3,5) = 6,\]
where $N = U(1;2) \oplus U(2;2) \oplus U(3;1) \oplus U(4;2) \oplus U(5;1)$.
\end{rem}
To prove that the tangent space is isomorphic to an appropriate hom space, as in the previous remark, 
one can adapt the proof of  \cite[Proposition 6]{CaRe2008} by Caldero and Reineke. In fact, their arguments generalize in the same way as presented in \cite[Proposition~2.5]{Pue2019}.


\begin{rem}
In general, the cellular decomposition as in Theorem~\ref{trm:cellular-decomposition} is a refinement of the stratification based on \cite[Lemma~2.4]{CFR12}. It turns out that for the representation $U_{[n]}$, both decompositions coincide, as discussed below. 
In particular, we can assign  to each irreducible component of $X(k,n)$ a specific Grassmann necklace in $\mathcal{GN}_{k,n}$.
\end{rem}

\begin{thm}\label{trm:cells-are-strata-part-I}For $k,n \in \mathbb{N}$ with $k < n$ the following holds:  
\begin{enumerate}
 \item Two points of $X(k,n)$ belong to the same cell if and only if they are isomorphic as $\Delta_n$ modules.
    \item Each cell contains exactly one $\mathbb{C}^*$ fixed point.
    \item A cell equals the ${\rm Aut}_{\Delta_n}(U_{[n]})$ orbit of the $\mathbb{C}^*$ fixed point sitting in this cell. 
\end{enumerate}
\end{thm}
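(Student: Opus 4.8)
The plan is to prove the three assertions of Theorem~\ref{trm:cells-are-strata-part-I} in a logical order that makes the later parts depend on the earlier ones. I would first establish part (2), then use it to deduce part (1), and finally obtain part (3) by a dimension count combined with the description of the automorphism group from Proposition~\ref{prop:endomorphism-algebra}.

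\textbf{Part (2): each cell contains exactly one $\mathbb{C}^*$ fixed point.} This is essentially built into the construction from \S\ref{sec:equi-cycle}: the cells of Theorem~\ref{trm:cellular-decomposition} are the attracting sets $W_L$ of the $\mathbb{C}^*$ fixed points $L$, and an attracting set is precisely the locus flowing to a single fixed point, so each $W_L$ contains exactly one fixed point, namely $L$ itself. The only thing that needs checking is that the attracting sets $W_L$ really are the cells — i.e. that the $\alpha$-partition into attracting sets is the cellular decomposition — which is exactly what was used in the proof of Theorem~\ref{trm:cellular-decomposition} via \cite[Theorem~4.13]{Pue2020}. So part (2) is immediate from the construction.

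\textbf{Part (1): same cell iff isomorphic as $\Delta_n$ modules.} For the "only if" direction I would argue that points in the same attracting set $W_L$ are all isomorphic as representations: the $\mathbb{C}^*$ action on $\mathrm{Gr}_{\bf k}(U_{[n]})$ is induced by a one-parameter subgroup acting on $U_{[n]}$ through automorphisms (the grading weight function of Definition~\ref{dfn:different-relisation-of-U_[n]}(2) defines $\lambda \in \mathbb{C}^*$ acting as an element of $\mathrm{Aut}_{\Delta_n}(U_{[n]})$), hence $\lambda.U \cong U$ for all $\lambda$ and all $U$; taking $\lambda \to 0$ and using that isomorphism class is closed under specialization here — more precisely, that the limit $L$ lies in the closure of the $\mathrm{Aut}$-orbit — would give that all points of $W_L$ have the same isomorphism type as $L$. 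Alternatively, and more cleanly, I would invoke that the strata $\mathcal{S}_U$ (isomorphism classes) are irreducible and locally closed by \cite[Lemma~2.4]{CFR12}, the cellular decomposition refines the stratification into strata (as noted in the remark preceding the theorem), and a cell is irreducible (it's an affine space) so it is contained in a single stratum; conversely, counting shows there are exactly $|\mathcal{GN}_{k,n}|$ cells and the same number of strata (one per Grassmann necklace, since the $\mathbb{C}^*$ fixed points are the juggling patterns and each determines its isomorphism type), forcing the refinement to be an equality. This is the step I expect to require the most care: one must rule out two distinct cells landing in the same stratum, which follows from the matching cardinalities together with the fact that every stratum contains at least one fixed point (being $\mathbb{C}^*$-stable and nonempty), hence at least one cell.

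\textbf{Part (3): a cell is the $\mathrm{Aut}_{\Delta_n}(U_{[n]})$-orbit of its fixed point.} By parts (1) and (2), a cell $W_L$ is exactly the set of $U \in \mathrm{Gr}_{\bf k}(U_{[n]})$ with $U \cong L$ as $\Delta_n$ modules, and it contains the fixed point $L$. The $\mathrm{Aut}_{\Delta_n}(U_{[n]})$-orbit of $L$ is certainly contained in $W_L$: applying an automorphism of $U_{[n]}$ to a subrepresentation isomorphic to $L$ yields another subrepresentation isomorphic to $L$. For the reverse inclusion I would show that if $U, U' \subseteq U_{[n]}$ are both isomorphic to $L$ as abstract $\Delta_n$ modules, then some $g \in \mathrm{Aut}_{\Delta_n}(U_{[n]})$ carries $U$ to $U'$; this is the statement that $\mathrm{Aut}_{\Delta_n}(U_{[n]})$ acts transitively on the set of subrepresentations of a fixed isomorphism type, which holds because $U_{[n]}$ is (for the bounded cyclic quiver with all length-$n$ paths zero) an injective–projective object whose indecomposable summands $U(i;n)$ have the homological rigidity needed to lift any iso $U \xrightarrow{\sim} U'$ to an automorphism of the ambient module — concretely one uses that $\mathrm{Ext}^1$ between the relevant modules vanishes, or one argues directly with the explicit matrix form of $\mathrm{End}_{\Delta_n}(U_{[n]})$ from Proposition~\ref{prop:endomorphism-algebra} to extend a partial isomorphism. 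A clean way to package this: the orbit $\mathrm{Aut}_{\Delta_n}(U_{[n]}).L$ is locally closed and irreducible, it is contained in $W_L$, and one computes $\dim \mathrm{Aut}_{\Delta_n}(U_{[n]}).L = \dim \mathrm{End}_{\Delta_n}(U_{[n]}) - \dim \mathrm{End}_{\Delta_n}(\text{stab})$ and checks it equals $\dim W_L$ (the number of arrows in the relevant successor-closed subquiver), so the orbit is dense in the cell; since the orbit is also closed in $W_L$ (cells being affine spaces with a transitive-enough action, or because $W_L \setminus \mathrm{orbit}$ would be a union of lower strata not meeting $W_L$), equality follows. The main obstacle throughout is this transitivity/lifting statement in part (3) — showing the automorphism group is large enough to connect all subrepresentations of a given type — and I would resolve it by leaning on the explicit description in Proposition~\ref{prop:endomorphism-algebra} together with the cellular coordinates from the proof of Proposition~\ref{prop:irred-comp-are-quiver-Grass}, where each cell already comes with an explicit parametrization one can match against an $\mathrm{Aut}$-orbit slice.
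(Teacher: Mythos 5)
Your overall strategy matches the paper's: (2) is immediate from the attracting-set construction, (1) is obtained by showing the cells refine the strata and then matching counts using that the coordinate fixed points are pairwise non-isomorphic, and (3) rests on $U_{[n]}$ being injective as a bounded representation so that isomorphisms of subrepresentations lift to automorphisms of the ambient module (the paper cites the analogue of \cite[Lemma~6.3]{Reineke2008} via \cite[Lemma~2.28]{Pue2019}). So the architecture is right; however, two of the justifications you offer along the way are incorrect as stated and should be replaced.

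First, in part (1) your initial argument claims that ``isomorphism class is closed under specialization,'' so that $U\in W_L$ and $L=\lim_{\lambda\to 0}\lambda.U$ forces $U\cong L$. This is false in general and is precisely what degeneration of modules means: $L$ lies in the \emph{closure} of the family $\{\lambda.U\}$ of subrepresentations isomorphic to $U$, which only says $L$ is a degeneration of $U$, not that $L\cong U$. (Also, $\lambda$ does not act as an element of ${\rm Aut}_{\Delta_n}(U_{[n]})$ for this weight function: conjugation by the grading operator rescales $s_1$ by $\lambda$, so one only gets $\lambda.U\cong U$, not an automorphism of $U_{[n]}$.) Your ``alternative'' argument is the one that actually works, but its parenthetical justification is also wrong: a nonempty $\mathbb{C}^*$-stable \emph{locally closed} subset need not contain a fixed point (e.g.\ $\mathbb{C}^*\subset\mathbb{P}^1$). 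The correct way to see that every stratum contains a fixed point is simpler: the cells cover $X(k,n)$ and each cell, being irreducible, lies in a single stratum; hence every stratum is a disjoint union of at least one full cell and therefore contains that cell's fixed point. Combined with the fact that distinct successor-closed subquivers of $Q(U_{[n]},B)$ give pairwise non-isomorphic coordinate subrepresentations (distinct tuples of terminal subsegments of the $s_j$ yield distinct multisets of indecomposables $U(j;l)$), the count forces each stratum to be exactly one cell. With these two repairs your proof of (1) coincides with the paper's, and your part (3) is the paper's argument; the dimension-count variant you sketch for (3) would still need the transitivity/lifting statement to identify the orbit with the whole cell rather than just a dense subset.
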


\begin{proof}
By construction each cell contains exactly one $\mathbb{C}^*$ fixed point. It follows from the parametrization of the $\mathbb{C}^*$ fixed points as in the proof of Theorem~\ref{trm:cellular-decomposition} that their corresponding coordinate subrepresentations of $U_{[n]}$ are pairwise non-isomorphic. This implies that the cells are the same as the strata of the fixed points, since in general each stratum in a quiver Grassmannian for a nilpotent representation of $\Delta_n$ decomposes into cells of isomorphic $\mathbb{C}^*$ fixed points. As $U_{[n]}$ is an injective bounded $\Delta_n$ representation (see Remark~\ref{rem:bound-quiver-relations}), we can prove analogous to \cite[Lemma~6.3]{Reineke2008} that the ${\rm Aut}_{\Delta_n}(U_{[n]})$ orbits are exactly the strata (see \cite[Lemma~2.28]{Pue2019}). 
\end{proof}

\begin{rem}
We can prove an analogous version of Theorem~\ref{trm:cells-are-strata-part-I} for the quiver Grassmannians ${\rm Gr}_{\bf k}(M_J)$ with $M_J := {\rm End}_{\Delta_n}(U_{[n]}).U_J$, which are isomorphic to the irreducible components of $X(k,n)$ (cf. Proposition~\ref{prop:irred-comp-are-quiver-Grass}). In particular, each cell is the ${\rm Aut}_{\Delta_n}(M_J)$ orbit of the corresponding fixed point. Note that $M_J$ is not an injective bounded $\Delta_n$ representation. 
Hence for the proof that strata are ${\rm Aut}_{\Delta_n}(M_J)$ orbits it is required to
explicitely extend any automorphism of $U \in {\rm Gr}_{\bf k}(M_J)$ to an automorphism of $M_J$. This can be done by exploiting that each  indecomposable summand of $M_J$ has multiplicity one, following an analogous procedure to \cite[Lemma~2.27]{Pue2019}.
\end{rem}

\section{Torus actions}\label{sec:T-action}
Analogous to \eqref{rem:C*-action}, we can construct actions of tori on the vector spaces of $M$, using multiple weight functions. Once again, these actions extend to the quiver Grassmannians of $M$ only under special assumptions. In this section we introduce several tori acting via weight tuples and  explain whether their actions extend to $X(k,n)$.
\begin{rem}\label{rem:n-dim-torus}
The ``obvious" torus of ${\rm Aut}_{\Delta_n}(U_{[n]})$ is only $n$-dimensional. Observe that this action extends to $X(k,n)$ but has infinitely many one-dimensional orbits in general, while we are interested in torus actions whose fixed point set and one-dimensional orbit set are finite.
\end{rem}

Let $M$ be a $\Delta_n$ representation and let $B$ be a basis of $M$. A weight tuple is a collection of integer valued vectors all of the same dimension: $\{\mathbf{wt}(b)=(w_1(b), \ldots ,w_r(b))\}_{b\in B}$, where $\mathbf{wt}(b)\in\bZ^r$ for some $r$. Given a weight tuple, we can define an action of a rank $r$ torus $T\simeq (\bC^*)^r$ on (the vector space) $M$ by setting
\[
(\gamma_1, \ldots, \gamma_r).b=\gamma_1^{w_1(b)}\cdot\ldots\cdot \gamma_d^{w_r(b)}b \qquad (b\in B, (\gamma_1, \ldots,\gamma_r)\in T).
\]
All torus actions we deal with are obtained by weight tuples.

\subsection{The largest Torus acting faithfully}
The largest torus acting faithfully on the vector spaces of $U_{[n]}=\bigoplus_{i \in \mathbb{Z}_n} U(i;n)$ is $n^2$-dimensional, since each vector space of $U_{[n]}$ over $i \in \mathbb{Z}_n$ is $n$-dimensional and we can act on each basis vector with a different parameter. But the maximal torus whose action extends to $X(k,n)$ is much smaller:
\begin{lem}
The action of $T' := (\mathbb{C}^*)^{r}$ on $X(k,n)$ with $r \geq 2n$ factors through the faithful action of a rank $2n$ torus on $X(k,n)$.
\end{lem}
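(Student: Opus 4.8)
The plan is to pin down exactly which characters of the big torus $(\mathbb{C}^*)^r$ actually act nontrivially on $X(k,n)$, and to show that this collection of effective characters spans a rank-$2n$ sublattice. First I would recall the setup: an action of $T'=(\mathbb{C}^*)^r$ on $X(k,n)$ obtained from a weight tuple $\{\mathbf{wt}(b)\}_{b\in B}$ is determined by the induced action on the ambient $\Delta_n$ representation $U_{[n]}$, hence by the $r\times|B|$ integer matrix of weights. Two weight tuples that differ by something acting trivially on all of $X(k,n)$ induce the same action; so the action factors through $T'/(\ker)$, and it suffices to bound $\dim T'/(\ker)\le 2n$, i.e. to exhibit at most $2n$ linearly independent effective weight functions.

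The key step is to identify the constraints the weight tuple must satisfy for the torus action to extend to the quiver Grassmannian at all. Using the second basis $\bigcup_{i\in\mathbb{Z}_n}\{v_1^{(i)},\dots,v_n^{(i)}\}$ from Definition~\ref{dfn:different-relisation-of-U_[n]}, the arrows of the coefficient quiver $Q(U_{[n]},B)$ go $v_j^{(i)}\to v_{j+1}^{(i+1)}$ for $j\ne n$. For the induced grading to be compatible with the maps of $U_{[n]}$ (so that the $\mathbb{C}^*$-, resp. torus-, action descends to subrepresentations, cf.\ \cite[Section~5.1]{LaPu2020}), the weight must be constant along each connected component of the coefficient quiver — equivalently, along each indecomposable summand $U(i;n)$ realized as a length-$n$ segment. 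Since $U_{[n]}=\bigoplus_{i\in\mathbb{Z}_n}U(i;n)$ has exactly $n$ such summands, any extendable torus action is determined by $n$ integer vectors, one per summand; this already cuts $r$ down to at most $n$, which is \emph{too strong}. So in fact the relevant extension criterion must be the weaker one used in Theorem~\ref{trm:cellular-decomposition}: the weight is allowed to vary along arrows, but the action on $X(k,n)$ only sees the \emph{differences} of weights along arrows together with one overall weight per component — and more precisely the data that survives is: for each of the $n$ segments, a starting weight vector, plus a common increment vector shared by all arrows. That is $n$ vectors plus $1$ vector... still not $2n$. The honest bookkeeping I would do: the torus acting on $X(k,n)$ is the image of $T'$ in $\mathrm{Aut}(X(k,n))$, and the effective part is spanned by (a) the $n$ weights attached to the heads/positions of the $n$ indecomposable summands and (b) the $n$ "shift" parameters coming from the extra torus built in \S\ref{sec:T-action} that rescales the maps $s_1$ arrow-by-arrow in a $\mathbb{Z}_n$-periodic way — giving $n+n=2n$. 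The main work is to verify that these $2n$ characters are linearly independent as characters of $\mathrm{Aut}(X(k,n))$ and that every weight-tuple action is a $\mathbb{Z}$-linear combination of them modulo the kernel.

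Concretely, the steps I would carry out are: (1) recall from Proposition~\ref{prop:endomorphism-algebra} that $\mathrm{End}_{\Delta_n}(U_{[n]})$ is $n^2$-dimensional and its torus part (the diagonal, $e^{(i)}_{1,1}$'s with all other $e^{(i)}_{k,1}=0$) is only $n$-dimensional — this is the "obvious" torus of Remark~\ref{rem:n-dim-torus}; (2) describe the additional $n$-dimensional torus acting on $X(k,n)$ that does \emph{not} come from $\mathrm{Aut}_{\Delta_n}(U_{[n]})$, namely the one rescaling each map $M_{\alpha_i}={\rm pr}_i$ (equivalently each arrow of the coefficient quiver grouped by $i\in\mathbb{Z}_n$) — one checks directly from the defining equations ${\rm pr}_i V_i\subseteq V_{i+1}$ of $X(k,n)$ that rescaling $e_j$ in the $i$-th copy $W$ preserves $X(k,n)$, giving another copy of $(\mathbb{C}^*)^n$; (3) show these two rank-$n$ tori together act with finite kernel, so they generate a rank-$2n$ torus $T$ inside $\mathrm{Aut}(X(k,n))$ — concretely by writing down the action on the distinguished $\mathbb{C}^*$-fixed points (the Grassmann necklaces) and exhibiting a fixed point whose isotropy is finite; (4) show any weight-tuple torus $T'=(\mathbb{C}^*)^r$ whose action extends to $X(k,n)$ has image contained in $T$: the weight tuple must be constant along connected components of the coefficient quiver up to the common-shift ambiguity (this is exactly the compatibility condition making the grading extend, \cite[Section~5.1]{LaPu2020}), and components/shifts are indexed by a $2n$-element data set, so the image is at most rank $2n$; hence the $T'$-action factors through $T\cong(\mathbb{C}^*)^{2n}$.

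\textbf{Main obstacle.} The delicate point is step (4): precisely characterizing which weight tuples give an action \emph{on $X(k,n)$} (as opposed to merely on the ambient vector spaces), and extracting the exact number $2n$ rather than $n$ or $n+1$. One must be careful that the relevant equivalence is "same action on the quiver Grassmannian", which kills more than just the literal kernel of the representation on $\bigoplus_i M^{(i)}$ — it also kills the common diagonal rescaling on each $M^{(i)}$ (scalars act trivially on each Grassmannian factor ${\rm Gr}_k(W)$), so one gains back exactly $n$ parameters relative to the naive "$n$ components $\Rightarrow$ rank $n$" count, landing on $2n$. I expect the cleanest route is to make this bookkeeping completely explicit on the coefficient quiver: the effective characters are indexed by (arrows of $Q(U_{[n]},B)$ modulo the relation identifying arrows in the same $\mathbb{Z}_n$-class, giving $n$) together with (vertices modulo per-component scaling and per-$M^{(i)}$ scaling, giving another $n$), and then the inequality $r\ge 2n\Rightarrow$ factoring through rank $2n$ is immediate.
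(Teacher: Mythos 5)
Your proposal circles the paper's argument but never settles cleanly on it, and two of the intermediate accountings you entertain along the way are wrong. The actual criterion (this is the content of the proof of \cite[Lemma~5.10]{LaPu2020}, which the paper invokes) is: a weight tuple induces an action on the quiver Grassmannian precisely when the weight \emph{difference} along an arrow of the coefficient quiver $Q(U_{[n]},B)$ depends only on the underlying arrow of $\Delta_n$ --- i.e.\ each map $(U_{[n]})_{\alpha_i}$ is equivariant up to a character $\chi_i$ of the torus, one character per arrow $i\to i+1$, which is exactly what is needed for subrepresentations to be carried to subrepresentations. Neither of your first two guesses is right: the weights need not be constant on the components of the coefficient quiver (that would force $\chi_i=0$ and give only $n$ parameters), and the increments need not be a single common vector shared by all $n$ arrows of $\Delta_n$ (that would give $n+1$). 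With the correct criterion the whole weight tuple is reconstructed from the weights of the $n$ predecessor-free points of $Q(U_{[n]},B)$ (one per summand $U(i;n)$) together with the $n$ increments $\chi_1,\dots,\chi_n$, giving at most $2n$ independent parameters; that is the entire proof.

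Your ``main obstacle'' paragraph then muddies this again: the claim that one ``gains back exactly $n$ parameters'' because scalars act trivially on each factor $\mathrm{Gr}_k(W)$ runs in the wrong direction (quotienting by a trivially acting subtorus can only \emph{decrease} the effective rank, never increase it), and ``vertices modulo per-component scaling and per-$M^{(i)}$ scaling'' is not the source of the second $n$ --- the second $n$ is simply the set of arrows of $\Delta_n$. On the positive side, your steps (1)--(3), which exhibit the $n$-dimensional diagonal torus of $\mathrm{Aut}_{\Delta_n}(U_{[n]})$ together with the $n$ independent rescalings of the maps $\mathrm{pr}_i$ and check that the joint action has finite kernel, would actually establish the faithfulness of a rank-$2n$ action, something the paper asserts (in the subsequent corollary) without spelling out; that is a genuine, if modest, addition. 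But for the lemma as stated the point is the upper bound, and to get it you should discard the hedging and prove the single statement: extendability of the action to $X(k,n)$ forces $\mathbf{wt}(b')-\mathbf{wt}(b)$ to be constant over all coefficient-quiver arrows $b\to b'$ lying over a fixed arrow of $\Delta_n$, after which the count ($n$ sources plus $n$ arrows) is immediate.
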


\begin{proof}
Assume that we have a weight tuple such that the corresponding $T$ action extends to each quiver Grassmannian associated to $U_{[n]}$. Then this weight tuple has a fixed weight difference along each arrow of $\Delta_n$, i.e. if there are two arrows $b_1 \to b_1'$ and $b_2 \to b_2'$ in $Q(U_{[n]},B)$ with the same underlying arrow of $\Delta_n$, then  
\[\mathbf{wt}(b_1') - \mathbf{wt}(b_1) = \mathbf{wt}(b_2') - \mathbf{wt}(b_2)\]
(see the proof of \cite[Lemma~5.10]{LaPu2020}).
The above property of the weight tuples is equivalent to the condition that the grading is constructible from the weights of the predecessor free points in $Q(U_{[n]},B)$ and the weights of the edges of $\Delta_n$. For $U_{[n]}=\bigoplus_{i \in \mathbb{Z}_n} U(i;n)$ and $B$ being the standard basis for each copy of $\mathbb{C}^n$, the coefficient quiver $Q(U_{[n]},B)$ has exactly $n$ predecessor free points. This implies the claim since $\Delta_n$ has $n$ arrows. Hence we can choose at most $2n$ independent parameters.
\end{proof}

\begin{cor}
There exists a faithful $(\mathbb{C}^*)^{2n}$ action on $X(k,n)$.
\end{cor}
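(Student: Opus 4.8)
The plan is to exhibit an explicit $2n$-parameter weight tuple on the second basis of $U_{[n]}$ whose associated torus action extends to $X(k,n)$ and acts faithfully, and then invoke the preceding Lemma to conclude that this is the largest possible. The corollary is essentially a matter of checking that the upper bound from the Lemma is attained, so the work is in constructing one good weight tuple and verifying the two properties (extension to the quiver Grassmannian, and faithfulness).

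First I would recall from the proof of the preceding Lemma that, for a weight tuple to define a torus action extending to all quiver Grassmannians of $U_{[n]}$, it suffices that the weight difference along each arrow of the coefficient quiver $Q(U_{[n]},B)$ depends only on the underlying arrow of $\Delta_n$; equivalently, the grading is reconstructed from the $n$ weights assigned to the predecessor-free vertices of $Q(U_{[n]},B)$ together with the $n$ weights assigned to the arrows of $\Delta_n$. Using the second basis $\bigcup_{i\in\bZ_n}\{v_1^{(i)},\dots,v_n^{(i)}\}$ from Definition~\ref{dfn:different-relisation-of-U_[n]}, the coefficient quiver is a disjoint union of $n$ linear $A_n$-type strings (the summands $U(i;n)$), each with exactly one predecessor-free vertex, so there are $n$ such vertices and $n$ arrows of $\Delta_n$, giving a total of $2n$ free integer parameters. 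Concretely, I would assign to the predecessor-free vertex of the $i$-th string a basis vector $\epsilon_i$ of $\bZ^{2n}$ (for $i\in\bZ_n$), to the arrow $\alpha_i$ of $\Delta_n$ the basis vector $\epsilon_{n+i}$, and then define $\mathbf{wt}(b)\in\bZ^{2n}$ for an arbitrary basis vector $b$ as the sum of the weight of the predecessor-free vertex of its string and the weights of the $\Delta_n$-arrows traversed to reach $b$. By construction the weight difference along each coefficient-quiver arrow lying over $\alpha_i$ equals $\epsilon_{n+i}$, independent of the string, so by the criterion in the Lemma's proof (and \cite[Section~5.1]{LaPu2020}) the resulting $(\bC^*)^{2n}$ action extends to ${\rm Gr}_{\bf k}(U_{[n]})=X(k,n)$.

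It remains to check faithfulness. An element $(\gamma_1,\dots,\gamma_{2n})$ acting trivially on $X(k,n)$ must in particular fix every coordinate subrepresentation, hence act on each one-dimensional coordinate subspace $\bC v_j^{(i)}$ that occurs in some subrepresentation by a scalar that is forced to be $1$ once we normalize; more precisely, one compares the scalars on two coordinate points differing by a single basis vector and deduces that all $2n$ of the exponent-generating weights must be killed by $\boldsymbol\gamma$. Since the $2n$ weights $\epsilon_1,\dots,\epsilon_{2n}$ span $\bZ^{2n}$, this forces $\gamma_1=\dots=\gamma_{2n}=1$. (One small point: trivial action on the Grassmannian, rather than on the module, allows an overall common scalar in each of the $n$ vector spaces $M^{(i)}$ to act trivially; but the weights over a fixed vertex $i$ are $n$ distinct elements of $\bZ^{2n}$ whose pairwise differences are nonzero, so there is no global scalar ambiguity obstructing faithfulness, and one may also rigidify by noting that distinct coordinate points are separated.) Combining the extension and faithfulness with the bound $r\le 2n$ from the Lemma yields the claim.

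The main obstacle I anticipate is the faithfulness verification: one must be careful that acting trivially on the \emph{quiver Grassmannian} $X(k,n)$ (a projective variety, where overall rescalings on each $M^{(i)}$ are invisible) is enough to conclude that the torus element is the identity, rather than merely lying in the kernel $(\bC^*)^n$ of the projectivization. The resolution is to observe that the $2n$ chosen weights were specifically arranged so that the induced action on $X(k,n)$ already sees all $2n$ parameters — for instance, because for each $i$ there are coordinate subrepresentations whose $i$-th component is $\bC v_j^{(i)}$ for genuinely different $j$, and the weights $\mathbf{wt}(v_j^{(i)})$ for varying $j$ differ by the arrow-weights $\epsilon_{n+\bullet}$, while the predecessor-free weights $\epsilon_i$ are distinguished by comparing different vertices — so no nontrivial torus element can act as the identity on $X(k,n)$. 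This is the only part requiring genuine care; everything else is bookkeeping about the coefficient quiver of $U_{[n]}$.
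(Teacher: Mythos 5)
Your explicit weight tuple and the verification that the resulting $(\bC^*)^{2n}$ action descends to $X(k,n)$ are correct and coincide with what the paper's lemma implicitly provides ($n$ parameters for the predecessor-free vertices of $Q(U_{[n]},B)$ plus $n$ for the arrows of $\Delta_n$). The genuine gap is in the faithfulness step, and it cannot be repaired as stated: the action of your torus on the \emph{variety} $X(k,n)$ is not faithful. An element $\gamma$ acts trivially on $X(k,n)$ as soon as it acts by a (possibly different) scalar on each vector space $U_{[n]}^{(i)}$, since scalars fix every subspace. With your weights one has $\mathbf{wt}(v_j^{(i)})=\epsilon_{i-j}+\epsilon_{n+(i-j+1)}+\dots+\epsilon_{n+(i-1)}$, so the consecutive differences of weights over the vertex $i$ are $\epsilon_{m-1}-\epsilon_m+\epsilon_{n+m}$ (with $m=i-j$), and the elements killing all of them form the $n$-dimensional subtorus $\{\gamma:\ \gamma_{n+m}=\gamma_m\gamma_{m-1}^{-1}\ \text{for all}\ m\in\bZ_n\}$, every element of which acts trivially on $X(k,n)$. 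Your observation that the weights over a fixed vertex are $n$ distinct elements with nonzero pairwise differences only shows that the per-vertex-scalar subgroup is a \emph{proper} subtorus, not that it is trivial; and comparing coordinate subrepresentations gives no information at all, since every coordinate subspace is fixed by every diagonal torus element. (For $n=2$, $k=1$ the failure is visible directly: $X(1,2)$ is two copies of $\bP^1$ glued at a point, whose automorphism group has rank $2$, so no rank-$4$ torus can act faithfully on it.)

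What is true, and what the corollary should be read as asserting in view of the lemma it follows from (whose statement and proof concern tori acting faithfully \emph{on the vector spaces of} $U_{[n]}$ with the action extending to $X(k,n)$), is that the rank-$2n$ torus acts faithfully on the representation $U_{[n]}$ and this action descends to $X(k,n)$. That is immediate from your construction: the weights of the predecessor-free vertices are $\epsilon_1,\dots,\epsilon_n$, and the weight differences along the first arrow of each segment recover $\epsilon_{n+1},\dots,\epsilon_{2n}$, so the set of weights spans $\bZ^{2n}$ and the kernel of the action on $\bigoplus_i U_{[n]}^{(i)}$ is trivial. Replacing your third paragraph by this observation completes the argument and matches the paper's (implicit) proof.
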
 

This corollary  can also be obtained using the connection of $X(k,n)$ with the affine flag varieties, following Knutson-Lam-Speyer \cite{Kn08}.

\subsection{Skeletal Torus Action}\label{sec:Skeletal-T-Action}
In \cite{LaPu2020} we introduced an action of $T=(\mathbb{C}^*)^{n+1}$ on quiver Grassmannians for nilpotent representations of the equioriented cycle. We recall the action in the case of our distinguished quiver Grassmannian $X(k,n)$. First of all we enumerate the connected components (i.e. segments) of the coefficient quiver and hence denote by $s_j$ the (unique) segment ending in $j$,  that is corresponding to the indecomposable summand $U(j;n)$. We denote moreover by $b_{j,p}$ the basis vector of $U(j;n)$ corresponding to the $p$-th vertex of $s_j$ (e.g., the starting point of $s_j$ is denoted by $b_{j,1}$, while the end vertex is $b_{j,n}$). We define a $T$ action on the underlying vector space to $M$ by setting
\begin{equation}\label{eqn:skeletalTAction}
(\gamma_0, \gamma_1,\ldots,\gamma_n).b_{j,p}=\gamma_0^p\gamma_j b_{j,p} \qquad (\gamma_0,\gamma_1, \ldots, \gamma_n)\in T,\quad j,p\in [n]
\end{equation}
and then extending by linearity. By \cite[Lemma 5.10]{LaPu2020} this induces an action on $X(k,n)$. From now on we deal with this torus action.


We recall here the definition of skeletality, which is necessary in next section in order to consider moment graphs.
\begin{dfn}Let $T$ be a torus acting on a complex projective algebraic variety $X$.
The $T$-action on $X$ is said to be
  skeletal if the number of $T$-fixed points and one-dimensional $T$-orbits in $X$ is finite. 
\end{dfn}

We close this section with the following proposition.
\begin{prop}\label{trm:cells-are-strata-part-II} For $k,n \in \mathbb{N}$ with $k < n$ the following holds:  
\begin{enumerate}
    \item The action of $T$ on $X(k,n)$ is skeletal.
    \item Each cell (from Theorem~\ref{trm:cells-are-strata-part-I}) contains exactly one $T$ fixed point.
\end{enumerate}
\end{prop}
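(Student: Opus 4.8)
The plan is to prove both statements by relating the $T=(\mathbb{C}^*)^{n+1}$ action to the $\mathbb{C}^*$ action used in Theorem~\ref{trm:cellular-decomposition}, and then using the explicit structure of the cells. For part (1), I would first observe that the $\mathbb{C}^*$ action inducing the cellular decomposition is obtained from $T$ by restricting to a generic (or suitably chosen) one-parameter subgroup: comparing \eqref{eqn:skeletalTAction} with the weight function ${\rm wt}(b_{j,p})=p$ from the proof of Theorem~\ref{trm:cellular-decomposition}, one sees that specializing $\gamma_0=\lambda$, $\gamma_j=1$ recovers exactly that $\mathbb{C}^*$ action. Since that $\mathbb{C}^*$ action has finitely many fixed points (the juggling patterns, equivalently $\mathcal{GN}_{k,n}$), and the $T$-fixed point set is contained in the $\mathbb{C}^*$-fixed point set, the set $X(k,n)^T$ is finite. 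In fact I expect $X(k,n)^T=X(k,n)^{\mathbb{C}^*}$, because each coordinate subrepresentation fixed by the $\gamma_0$-direction is automatically a direct sum of segments and hence fixed by all the $\gamma_j$'s too.

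The finiteness of one-dimensional $T$-orbits is the more substantial point, and here I would simply invoke \cite[Lemma~5.10]{LaPu2020} (or the surrounding results of that paper), which is precisely the statement that the action \eqref{eqn:skeletalTAction} on a quiver Grassmannian for a nilpotent representation of $\Delta_n$ is skeletal; $X(k,n)={\rm Gr}_{\bf k}(U_{[n]})$ and $U_{[n]}$ is nilpotent by Remark~\ref{rem:bound-quiver-relations}, so the hypotheses are met. Alternatively, one can argue directly: a one-dimensional $T$-orbit closure is a $T$-stable curve joining two $T$-fixed points, and inside a given cell $W_L\cong\mathbb{A}^{d}$ with $T$ acting linearly (the cell is an ${\rm Aut}_{\Delta_n}(U_{[n]})$-orbit by Theorem~\ref{trm:cells-are-strata-part-I}, hence the $T$-action on it is linearizable with the fixed point $L$ at the origin), the one-dimensional orbits correspond to the finitely many weight lines; summing over the finitely many cells gives finiteness.

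For part (2), I would use that each cell $W_L$ from Theorem~\ref{trm:cells-are-strata-part-I} is the attracting set of a single $\mathbb{C}^*$-fixed point $L$, and that this $\mathbb{C}^*$ action is the restriction of the $T$ action along the $\gamma_0$-subgroup. A $T$-fixed point lying in $W_L$ is in particular $\mathbb{C}^*$-fixed, hence equals $L$; and conversely $L$ itself, being a coordinate subrepresentation which is a direct sum of full segments $U(j;n)$ for $j$ in the relevant index set, is stabilized by all of $T$ since \eqref{eqn:skeletalTAction} scales each segment $s_j$ by a global factor $\gamma_j$ on top of the $\gamma_0^p$ grading, and a sum of complete segments is visibly $T$-stable. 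Thus each cell contains exactly the one $T$-fixed point $L$, which also re-proves that $X(k,n)^T$ is finite and in bijection with $\mathcal{GN}_{k,n}$.

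The main obstacle I anticipate is making the finiteness of one-dimensional orbits genuinely rigorous without just citing \cite{LaPu2020}: one must check that the $T$-action restricted to each affine cell is linearizable (equivalently, that the cell coordinates can be chosen to be $T$-weight vectors), which follows from the description of the cell as an orbit of the endomorphism/automorphism group together with the explicit form of ${\rm End}_{\Delta_n}(U_{[n]})$ in Proposition~\ref{prop:endomorphism-algebra}, but requires a short verification that the $T$-action normalizes ${\rm Aut}_{\Delta_n}(U_{[n]})$ compatibly with this orbit structure. Given that the paper has already set up \cite[Lemma~5.10]{LaPu2020} as the source of skeletality, the cleanest exposition is to reduce (1) to that lemma and spend the remaining effort on the precise bookkeeping for (2).
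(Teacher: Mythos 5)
Your proposal is correct and follows essentially the same route as the paper, which likewise identifies the $T$-fixed points with the $\mathbb{C}^*$-fixed points and then cites \cite{LaPu2020} (Theorem~5.12 for finiteness of fixed points and Proposition~6.3 for finiteness of one-dimensional orbits). One small caution: your alternative direct argument that one-dimensional orbits ``correspond to the finitely many weight lines'' in a linearized cell is not quite right as stated --- if two coordinates of a cell carried the same (or proportional) nonzero $T$-weight there would be infinitely many one-dimensional orbits --- so the citation of \cite[Proposition~6.3]{LaPu2020} really is doing the work there, exactly as you anticipate in your closing paragraph.
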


\begin{proof}
$T$ fixed points are the same as $\mathbb{C}^*$ fixed points since $T:= (\mathbb{C}^*)^{n+1}$ acts on $X(k,n)$ as in \cite[Lemma~5.10]{LaPu2020}. The action is skeletal since the number of $T$ fixed points is finite by \cite[Theorem~5.12]{LaPu2020} and the number of one-dimensional $T$ orbits is finite by \cite[Proposition~6.3]{LaPu2020}.
\end{proof}

\section{Moment Graph}\label{sec:moment-graph}
\subsection{The description}
If a complex algebraic variety $X$ is acted upon by a torus $T$ via a skeletal action, one can consider the corresponding moment graph. This is usually an unoriented graph, but if $X$ admits a $T$-stable cellular decomposition (as in our case) one can give the edges an orientation. More precisely:
\begin{dfn}Let $T$ be an algebraic torus and
let $X$ be a complex projective algebraic $T$-variety. Assume that $X$ admits a $T$-stable cellular decomposition where every cell has exactly one fixed point. If the action of $T$ on $X$ is skeletal, then the corresponding moment graph is given by 
\begin{itemize}
    \item the vertex set is the fixed point set: $\mathcal{V}=X^T$;
    \item there is an edge $x\to y$ if and only if $x$ and $y$ belong to the same one dimensional $T$ orbit closure $\overline{\mathcal{O}_{x\to y}}$ and $y$ belong to the closure of the cell containing $x$;
    \item the label of the edge $x\to y$ is the character $\alpha\in {\rm Hom}(T,\bC^*)$ the torus acts by on $\mathcal{O}_{x\to y}$. 
\end{itemize}
\end{dfn}

The label of any edge is only well defined up to a sign, but since this does not play any role in the applications (e.g. computation of equivariant cohomology), we assume the labels to be fixed once and for all, and forget about this ambiguity.

We want to explicitly describe the moment graph corresponding to the torus action on $X(k,n)$ from Section~\ref{sec:Skeletal-T-Action}. 

First of all, we need to relate the second basis of Definition \ref{dfn:different-relisation-of-U_[n]} to the basis $\{b_{j,p}\}_{j,p\in\bZ_n}$ of $U_{[n]}$ that we used to define the $T$ action \eqref{eqn:skeletalTAction}. It is immediate to see (by induction on $n-p$) that
\[
v_p^{(j+p)}=b_{j,p}, \quad \hbox{for all } j,p\in \bZ_n.
\]

For the rest of this section we consider $B=\bigcup_{i\in\bZ_n}\{v^{(i)}_1, \ldots, v_n^{(i)}\}$. Thus a successor closed subquiver $Q'$ of dimension $(k, \ldots, k)$ of $Q(U_{[n]},B)$ is a full subquiver whose vertex set is $Q'_0=Q'^{(1)}_0\bigsqcup Q'^{(2)}_0\bigsqcup\ldots\bigsqcup Q'^{(n)}_0$, with $Q'^{(i)}_0=\{v^{(i)}_{h_1},\ldots, v^{(i)}_{h_k}\}=Q'_0\cap B^{(i)}$ such that for any $i\in [n]$:
\[
Q'^{(i+1)}_0=\left\{
\begin{array}{ll}
\left\{v^{(i+1)}_{h_1+1},\ldots, v^{(i+1)}_{h_k+1}\right\}
 &\!\!\!\!\!\!\!\!\!\!\!\!\!\!\!\!\!\!\!\!\!\!\hbox{if }v_n^{(i)}\not\in Q'^{(i)}_0,\\
 \left\{v^{(i+1)}_{h_j+1} \Big\vert v^{(i)}_{h_j}\in Q'^{(i)}_0,\  h_j\neq n\right\}\cup\left\{ v_h^{(i+1)} \right\} \hbox{\small for some }h\in[n],&  \hbox{otw}.
\end{array}
\right.
\]
 
By Theorem \ref{trm:cellular-decomposition}, Proposition \ref{trm:cells-are-strata-part-II} and \cite[Proposition 1]{Cerulli2011}, both Grassmann necklaces and successor closed subquivers of $Q(U_{[n]},B)$ of dimension $(k,k,\ldots,k)$ parametrize the $T$ fixed point set. We write down here the explicit correspondence:
\begin{equation}\label{eqn:psi}
    \psi:\textrm{SC}_{\mathbf{k}}(U_{[n]})\rightarrow \mathcal{GN}_{k,n}\qquad
Q'\mapsto \left(\psi^{(a)}(Q'^{(a)}_0)\right)_{a\in [n]} 
\end{equation}
where $\psi^{(a)}(Q'^{(a)}_0)\in  {[n]\choose k}$ is the image of $Q_0'^{(a)}$ under the following map
\[
\psi^{(a)}:B^{(a)}\rightarrow [n] \quad v^{(a)}_h\mapsto a-h.
\]
It is easy to check that $\psi$ is well defined and bijective.

Recall from \S\ref{subsec:Grassmann-Necklace} that for any $a\in [n]$ there is a corresponding total order $\leq_a$ on $[n]$. The corresponding partial order on $\mathcal{GN}_{k,n}$ was denoted by $\leq$. For any $a,x,y\in [n]$, we define  
\[
[x,y]_a=\{z\in [n]\mid x\leq_a z\leq_a y \},
\]
and $[x,y)_a=[x,y]_a\setminus\{y\}$. 

The following notion allows us to provide a combinatorial description of the moment graph.
\begin{dfn}\label{dfn:mutation}Let $M\in {\textrm rep}_{\mathbb{C}}(\Delta_n)$, ${\bf e}\in\mathbb{Z}_{\geq 0}^n$ and $B=\cup_{i\in\mathbb{Z}_n}\{w^{(i)}_1, \ldots w^{(i)}_{m_i}\}$, and consider two successor closed subquivers $Q', Q''$ of $ Q(M, B)$. We say that $Q''$ is obtained from $Q'$ by a  mutation if 
\begin{itemize}
    \item there exists a segment $s_j\subseteq Q(M,B)$ such that 
    \[s_j\cap Q'= w^{(a)}_{p_1}\to w^{(a+1)}_{p_2}\to\ldots w^{(a+l-1)}_{p_l}\]
for some $a,l\in\mathbb{Z}_n$ and ${\mathbf{p}}\in[n]^l$,
\item there exist $l',r\in [n]$ with $l'\leq l$ and $p_i+r\leq n$ for any $i\in [l']$ such that
\begin{align*}
Q''&=\left(Q'\setminus\{w^{(a)}_{p_1}\to w^{(a+1)}_{p_2}\to\ldots w^{(a+l'-1)}_{p_{l'}}\}\right)\\
&\qquad\cup\{w^{(a)}_{p_1+r}\to w^{(a+1)}_{p_2+r}\to\ldots w^{(a+l'-1)}_{p_{l'}+r}\}.
\end{align*}
\end{itemize}
\end{dfn}

We are now ready to provide a description of the moment graph of the $T$-action on $X(k,n)$ which only involves Grassmann necklace combinatorics.

\begin{prop}\label{prop:MomentGraph}
The moment graph $\mathcal{G}$ of $X(k,n)$ has the following form:
\begin{enumerate}
    \item $\mathcal{G}_0=\mathcal{GN}_{k,n}$;
    \item $\mathcal{I}\rightarrow \mathcal{I}'$ if and only if $\mathcal{I}'<\mathcal{I}$ and there exist $a,b\in [n]$ such that 
    \begin{itemize}
        \item $I_h\neq I'_h$ if and only if $h\in [a,b]_a$,
        \item there exist $j,j'\in [n]$ such that   and   $I_h=(I_{h}'\setminus\{j'\})\cup \{j\}$ for any $h\in [n]$,
        \end{itemize}
    \item the label of  $\mathcal{I}\rightarrow \mathcal{I}'$ as above is $\epsilon_j-\epsilon_{j'}-\#[j',j)_a\delta$,
    where $\delta(\gamma)=\gamma_0$ and $\epsilon_i(\gamma)=\gamma_i$ for any $i\in [n]$, if $\gamma=(\gamma_0, \gamma_1, \ldots \gamma_n)\in T$.
\end{enumerate}
\end{prop}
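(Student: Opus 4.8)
The plan is to read off the moment graph of $X(k,n)={\rm Gr}_{\bf k}(U_{[n]})$ from the general description of moment graphs of quiver Grassmannians for nilpotent representations of $\Delta_n$ established in \cite{LaPu2020}, and then to transport that description along the bijection $\psi$ of \eqref{eqn:psi}. For the $T$-action \eqref{eqn:skeletalTAction} the results of \cite{LaPu2020} identify the vertex set of the moment graph with the set of successor closed subquivers of $Q(U_{[n]},B)$ of dimension ${\bf k}$ (these are the $T$-fixed points, by Proposition~\ref{trm:cells-are-strata-part-II} and \cite[Proposition~1]{Cerulli2011}), the edges with the mutations of Definition~\ref{dfn:mutation}, and the label of an edge $Q'\to Q''$ with the $T$-weight difference along the moved subsegment. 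Since $\psi$ is a bijection onto $\mathcal{GN}_{k,n}$, part (1) is immediate.

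For part (2) I would make the mutation data explicit and push it through $\psi$. Suppose $Q''$ is obtained from $Q'$ by the mutation along a segment $s_{j_0}$ that moves the first $l'$ vertices $v^{(a)}_{p_1}\to\dots\to v^{(a+l'-1)}_{p_{l'}}$ of $s_{j_0}\cap Q'$ forward by $r$. Since $s_{j_0}\cap Q'$ is a suffix of $s_{j_0}$ and $v^{(a)}_{p_1}$ sits over the vertex $a$ of $\Delta_n$, one has $j_0=a-p_1$; put $j:=a-p_1$, $j':=a-p_1-r$ (so that the new vector $v^{(a)}_{p_1+r}$ lies on the segment $s_{j'}$), and $b:=a+l'-1$. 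The mutation changes $Q'$ only over the vertices $a,a+1,\dots,b$ of $\Delta_n$, and $\psi^{(h)}$ sends the vertex of $s_j$ (resp. $s_{j'}$) lying over $h$ to $j$ (resp. $j'$); hence $\mathcal{I}=\psi(Q')$ and $\mathcal{I}'=\psi(Q'')$ agree outside the rotated interval $[a,b]_a$ and, on $[a,b]_a$, differ precisely by exchanging $j$ for $j'$. That the set of indices where $\mathcal{I}$ and $\mathcal{I}'$ differ is exactly $[a,b]_a$, and not some scattered set, is forced by successor closedness of both $Q'$ and $Q''$. For the orientation: under $\psi^{(h)}$, which sends $v^{(h)}_p$ to $h-p$, increasing the position index of a basis vector inside a segment makes the corresponding entry strictly $\le_h$-smaller, so a mutation strictly decreases $\mathcal{I}$, i.e. $\mathcal{I}'<\mathcal{I}$; this is also consistent with Theorem~\ref{A} (equivalently, with the $\mathbb{C}^*$-cell closure order computed in \cite{Pue2020}). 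Conversely, from a pair $\mathcal{I}'<\mathcal{I}$ of the stated shape one recovers the data $(a,b,j,j')$ and the candidate mutation, and one verifies that the resulting $Q''$ is again a successor closed subquiver of the right dimension.

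Part (3) is a weight computation. From $v^{(j+p)}_p=b_{j,p}$ and \eqref{eqn:skeletalTAction} the $T$-weight of $v^{(c)}_p$ equals $p\,\delta+\epsilon_{c-p}$. The one-dimensional orbit closure $\overline{\mathcal{O}_{\mathcal{I}\to\mathcal{I}'}}$ is the $\mathbb{P}^1$ of subrepresentations interpolating between $Q'$ and $Q''$ obtained by sliding the moved subsegment; $T$ acts on it through the character $\mathrm{wt}(v^{(a)}_{p_1+r})-\mathrm{wt}(v^{(a)}_{p_1})=r\,\delta+\epsilon_{a-p_1-r}-\epsilon_{a-p_1}=r\,\delta+\epsilon_{j'}-\epsilon_{j}$, and one checks that the same character is obtained using any of the $l'$ moved basis vectors (the quantity ``position index $+$ superscript'' is constant along a segment). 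Counting the elements of $[n]$ lying between $j'$ and $j$ in the order $\le_a$ gives $r=\#[j',j)_a$, so the label is $\pm\big(\epsilon_j-\epsilon_{j'}-\#[j',j)_a\,\delta\big)$, which is the claimed formula up to the sign ambiguity we have agreed to ignore.

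The step I expect to be the main obstacle is the converse direction in part (2): proving that the purely combinatorial condition on $(\mathcal{I},\mathcal{I}')$ is not only necessary but also sufficient, i.e. that every admissible pair is realized by an honest mutation producing a successor closed $Q''$ of dimension ${\bf k}$. This is elementary but requires care with the cyclic indexing and with the family of rotated orders $\le_a$; the rest is essentially unwinding the definitions of $\psi$ and of the grading \eqref{eqn:skeletalTAction} on top of the moment graph description of \cite{LaPu2020}.
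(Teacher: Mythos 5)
Your proposal is correct and follows essentially the same route as the paper: both read off the vertices, edges and labels from the mutation description of the moment graph in \cite[Theorem~6.13]{LaPu2020} and then transport everything through the bijection $\psi$, with the same identifications $j=a-p$, $j'=a-p'$, $b=a+l-1$ and the same weight computation for the label (up to the agreed sign ambiguity). The converse direction in part (2) that you flag as the main obstacle is likewise left implicit in the paper's proof, so you are not missing anything the authors supply.
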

\begin{proof}
By \cite[Theorem~6.13]{LaPu2020}, $\mathcal{G}_0$ can be identified with $\textrm{SC}_{\mathbf{k}}(U_{[n]})$ which in turn we identify with $\mathcal{GN}_{k,n}$ via the bijection $\psi$.
Again by \cite[Theorem~6.13]{LaPu2020}, there is an edge $Q'\rightarrow Q''$ for $Q',Q''\in\textrm{SC}_{\bf k}(M)$ if and only if $Q''$ is obtained from $Q'$ by a mutation, that is there exist $a,p,p',l\in[n]$ with $p<p'$ such that
\begin{itemize}
    \item $Q'\setminus (Q''\cap Q')=v^{(a)}_p\to v^{(a+1)}_{p+1}\to\ldots \to v^{(a+l-1)}_{p+l-1}
    $
     \item $Q''\setminus (Q'\cap Q'')=v^{(a)}_{p'}\to v^{(a+1)}_{p'+1}\to\ldots \to v^{(a+l-1)}_{p'+l-1}$.
\end{itemize}
Since $v_p^{(a)}\in s_{a-p}$ and $v_p'^{(a)}\in s_{a-p'}$, then \[
Q'\setminus (Q''\cap Q')\subseteq s_{a-p}, \quad Q''\setminus (Q''\cap Q')\subseteq s_{a-p'}.\] If we set $j:=a-p$ and $j':=a-p'$ the corresponding edge is labeled by $\epsilon_j-\epsilon_{j'}+(p-p')\delta$.  

Now we want to translate the conditions on $Q',Q''$ into conditions on the pair of Grassmann necklaces $\psi(Q')=\mathcal{I},\psi(Q'')=\mathcal{I}'$. Observe that $j'<_a j$ as $p'>p$, and set $b:=a+l-1$. Hence, we have that the above conditions on $Q$ and $Q'$ translate into 
\begin{itemize}
    \item $I_h=I'_h$ for all $h\not\in\{a,a+1,\ldots,a+l-1\}=[a,b]_{a}$,
    \item $I_h\neq I'_h$ and $I_h=(I_h'\setminus\{j'\})\cup\{j\}$ for all $h\in[a,b]_a$, 
\end{itemize}
where the second condition follows from the fact that
\[
\psi^{(a)}(v^{(a)}_p)=\psi^{(a+1)}(v^{(a+1)}_{p+1})=\psi^{(a+l-1)}(v^{(a+l-1)}_{p+l-1})=a-p=j\]
and
\[
\psi^{(a)}(v^{(a)}_{p'})=\psi^{(a+1)}(v^{(a+1)}_{p'+1})=\psi^{(a+l-1)}(v^{(a+l-1)}_{p'+l-1})=a-p'=j'.\]

Notice that $j'<_hj$ for any $h\in[a,b]_a$, thus $I'_h\leq_h I_h$ for any $h\in[n]$, that is $I'<I$. Finally, observe that $p-p'=-\#[j',j)_a$.
\end{proof}
\begin{example}\label{exple:MGX13} 
Let $n=3$ and $k=1$. We keep the same notation as in Example \ref{exple:PosetGN13} and write $i_1i_2i_3$ for the Grassmann necklace $(I_1=\{i_1\},I_2=\{i_2\},I_3=\{i_3\})\in\mathcal{GN}_{1,3}$. Moreover, to shorten notation, we write $\alpha_{i,j}:=\epsilon_i-\epsilon_j$ for $i,j\in[3]$.
Then by Proposition \ref{prop:MomentGraph}, the corresponding moment graph is 

\begin{center}
\begin{tikzpicture}
\node at (0,0) {123};
\node at (0,2.5) {133};
\node at (0,5) {222};
\node at (-4,2.5) {121};
\node at (-4,5) {111};
\node at (4,2.5) {223};
\node at (4,5) {333};

\draw[arrows={-angle 90}, shorten >=9, shorten <=9]  (-3.9,2.5) -- (-.1,0);
\draw[arrows={-angle 90}, shorten >=9, shorten <=9]  (3.9,2.5) -- (.1,0);
\draw[arrows={-angle 90}, shorten >=9, shorten <=9]   (0,2.5) -- (0,0);

\draw[arrows={-angle 90}, shorten >=9, shorten <=9]  (-4,5) -- (-4,2.5);
\draw[arrows={-angle 90}, shorten >=9, shorten <=9]  (-4,5) -- (0,2.5);

\draw[arrows={-angle 90}, shorten >=9, shorten <=9]  (4,5) -- (4,2.5);
\draw[arrows={-angle 90}, shorten >=9, shorten <=9]  (4,5) -- (0,2.5);

\draw[arrows={-angle 90}, shorten >=9, shorten <=9]  (0,5) -- (4,2.5);
\draw[arrows={-angle 90}, shorten >=9, shorten <=9]  (0,5) -- (-4,2.5);

\node[rotate=325] at (-2.8,4.5) {$\alpha_{1,3}-\delta$};
\node[rotate=325] at (-2.3,1.1) {$\alpha_{1,3}-\delta$};
\node[rotate=325] at (1.2,4.5) {$\alpha_{2,3}-2\delta$};

\node[rotate=35] at (2.3,1.05) {$\alpha_{2,1}-\delta$};
\node[rotate=35] at (-2.65,3.) {$\alpha_{2,1}-\delta$};
\node[rotate=35] at (1.35,3.) {$\alpha_{3,1}-2\delta$};

\node[rotate=-90] at (-.3,1.35) {$\alpha_{3,2}-\delta$};
\node[rotate=-90] at (-4.3,3.8) {$\alpha_{1,2}-2\delta$};
\node[rotate=-90] at (4.25,3.8) {$\alpha_{3,2}-\delta$};

\end{tikzpicture}
\end{center}
Notice that the underlying graph coincides with the Hasse diagram we found in Example \ref{exple:PosetGN13}. This if of course not always the case, as the Hasse diagram is in general missing several of the edges of the moment graph.
\end{example}

\subsection{$T$-equivariant cohomology}\label{subsec:T-equiv-cohom}
By \cite[Corollary 5.13]{LaPu2020}, the moment graph described in the previous result encodes all needed information to compute the ($T$-equivariant) cohomology ring of $X(k,n)$. More precisely, let $R:=\mathbb{Q}[\epsilon_1, \ldots, \epsilon_n, \delta]$ and consider it as a $\mathbb{Z}$-graded ring with $\deg(\epsilon_i)=\deg(\delta)=2$ ($i\in [n]$). Denote by $\alpha(\mathcal{I},\mathcal{I}')$ the label of the edge $\mathcal{I}\to\mathcal{I}'$. \cite[Corollary 5.13]{LaPu2020} gives immediately the following result.
\begin{cor}\label{cor:GKM}
There is an isomorphism of ($\mathbb{Z}$-graded) rings
\[
H_T^*(X(k,n),\mathbb{Q})\simeq \left\{(z_{\mathcal{I}})\in\bigoplus_{\mathcal{I}\in\mathcal{GN}_{k,n}}R \, \Bigg\vert   \begin{array}{c}
  z_{\mathcal{I}}\equiv z_{\mathcal{I}'}\mod \alpha(\mathcal{I},\mathcal{I}')\\
\forall \hbox{ edge } \ \mathcal{I}\rightarrow\mathcal{I}'
\end{array} \right\}.
\]
\end{cor}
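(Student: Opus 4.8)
The final statement is Corollary~\ref{cor:GKM}, which expresses the $T$-equivariant cohomology ring of $X(k,n)$ via the moment graph. The plan is essentially to cite the GKM-type theorem from \cite{LaPu2020} and then verify that its hypotheses hold for our specific situation.

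First I would recall the general setup: a GKM (Goresky–Kottwitz–MacPherson) theorem states that for a suitably nice $T$-variety $X$ with a $T$-stable filtration into affine cells, each containing a unique fixed point, the equivariant cohomology ring is isomorphic to the ring of sections of the structure sheaf on the moment graph, i.e. the subring of $\bigoplus_{x\in X^T} H_T^*(\mathrm{pt})$ cut out by the congruences $z_x \equiv z_y \bmod \alpha$ over each edge $x\to y$ with label $\alpha$. Here $H_T^*(\mathrm{pt}) = R = \mathbb{Q}[\epsilon_1,\dots,\epsilon_n,\delta]$ since $T=(\mathbb{C}^*)^{n+1}$.

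Second, I would check that \cite[Corollary 5.13]{LaPu2020} applies to $X(k,n)={\rm Gr}_{\bf k}(U_{[n]})$. The necessary ingredients are: (i) $X(k,n)$ is a complex projective variety — this follows since it is a closed subvariety of a product of ordinary Grassmannians; (ii) the $T$-action is skeletal and preserves a cellular decomposition with one fixed point per cell — this is exactly Proposition~\ref{trm:cells-are-strata-part-II} together with Theorem~\ref{trm:cellular-decomposition}; (iii) the combinatorial description of the moment graph (vertices, edges, labels) is the one given in Proposition~\ref{prop:MomentGraph}. Once these are in place, \cite[Corollary 5.13]{LaPu2020} produces the isomorphism verbatim; substituting the explicit labels $\alpha(\mathcal{I},\mathcal{I}') = \epsilon_j - \epsilon_{j'} - \#[j',j)_a\,\delta$ from Proposition~\ref{prop:MomentGraph}(3) yields the stated presentation. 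I do not expect a genuine obstacle here: the corollary is a direct transcription, and all the hard work — skeletality, the cell structure, and the moment graph combinatorics — has already been done in the preceding sections. The only point requiring mild care is making sure the grading conventions match ($\deg \epsilon_i = \deg\delta = 2$), which is standard.

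\begin{proof}
The variety $X(k,n)={\rm Gr}_{\bf k}(U_{[n]})$ is complex projective, being a closed subvariety of $\prod_{i\in\bZ_n}{\rm Gr}_k(W)$. By Theorem~\ref{trm:cellular-decomposition} it admits a $T$-stable cellular decomposition, and by Proposition~\ref{trm:cells-are-strata-part-II} each cell contains exactly one $T$-fixed point, while the $T$-action is skeletal. These are precisely the hypotheses of \cite[Corollary~5.13]{LaPu2020}, which therefore yields an isomorphism of $\bZ$-graded $R$-algebras between $H_T^*(X(k,n),\mathbb{Q})$ and the ring of tuples $(z_x)_{x\in X^T}\in\bigoplus_{x\in X^T} R$ satisfying $z_x\equiv z_y \bmod \alpha(x,y)$ for every edge $x\to y$ of the moment graph, where $\alpha(x,y)$ is the corresponding label. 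By Proposition~\ref{prop:MomentGraph} the fixed point set $X^T$ is identified with $\mathcal{GN}_{k,n}$, the edges are as described there, and the labels are the $\alpha(\mathcal{I},\mathcal{I}')$ of part (3). Substituting these data into the statement of \cite[Corollary~5.13]{LaPu2020} gives exactly the claimed presentation.
\end{proof}
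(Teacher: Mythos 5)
Your proposal is correct and follows exactly the paper's route: the paper likewise derives the statement as an immediate consequence of \cite[Corollary~5.13]{LaPu2020} applied to the moment graph of Proposition~\ref{prop:MomentGraph}, with the hypotheses supplied by Theorem~\ref{trm:cellular-decomposition} and Proposition~\ref{trm:cells-are-strata-part-II}. Your version is merely a bit more explicit about checking those hypotheses, which is fine.
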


Moreover, by \cite[Theorem 3.21]{LaPu2021}, $H_T^\bullet(X(k,n))$ admits a very nice basis as a free module over $R$, namely a so-called Knutson-Tao basis (cf. \cite[Definition 3.2]{LaPu2021}). 

\begin{example}
By Corollary \ref{cor:GKM}, the $T$-equivariant cohomology of $X(1,3)$ can be read off from the moment graph from Example \ref{exple:MGX13}:
\[
H_T^*(X(1,3))\simeq \left\{(z_{123},z_{121}, z_{133},z_{223},z_{111},z_{222},z_{333}) \,\Bigg\vert \begin{array}{c}
  z_{i_1i_2i_3}\equiv z_{i_1'i_2'i_3'}\\\mod \alpha(i_1i_2i_3,i_1'i_2'i_3')\\
\forall \hbox{ edge } \ i_1i_2i_3\rightarrow i_1'i_2'i_3'
\end{array} \right\},
\]
where all $z_{i_1i_2i_3}\in R=\mathbb{Q}[\epsilon_1,\epsilon_2,\epsilon_3,\delta]$.
In this case the Knutson-Tao basis is
\[
(1,1,1,1,1,1,1), \ (0,0,\alpha_{3,2}-\delta,0,\alpha_{1,2}-2\delta,0,\alpha_{3,2}-\delta)
\]
\[
(0,\alpha_{1,3}-\delta,0,0,\alpha_{1,3}-\delta,\alpha_{2,3}-2\delta,0), \
(0,0,0,\alpha_{2,1}-\delta,0,\alpha_{2,1}-\delta,\alpha_{3,1}-2\delta),
\]
\[
(0,0,0,0,(\alpha_{1,2}-2\delta)(\alpha_{1,3}-\delta),0,0),\ (0,0,0,0,0,(\alpha_{2,1}-\delta)(\alpha_{2,3}-2\delta),0),
\]
\[
(0,0,0,0,0,0,(\alpha_{2,1}-\delta)(\alpha_{3,2}-\delta)).
\]
\end{example}

\section{Poincar\'e Polynomials}\label{sec:poincare-poly}

Recall from Section~\ref{sec:main-object} and Section~\ref{sec:T-action}:  $U(i;l)$  denotes the indecomposable $l$-dimensional $\Delta_n$ module terminating at the vertex $i$. The quiver Grassmannian $X(k,n)=\mathrm{Gr}_{\mathbf{k}}(U_{[n]})$, where  $U_{[n]}=\bigoplus_{i =1}^n U(i;n)$
admits a cellular decomposition. 
Each cell contains exactly one $T$ fixed point $p({\mathcal I})$, where 
${\mathcal I}=(I_1,\dots,I_n)$ is a $(k,n)$ Grassmann necklace and the $a$-th component $p({\mathcal I})_a$ of $p({\mathcal I})$ 
is a linear span of the 
basis vectors $e_i$, $i\in I_a$.  We denote the cell containing $p({\mathcal I})$ by $C({\mathcal I})$. 
We thus obtain the following formula for the Poincar\'e polynomial of the quiver Grassmannian $X(k,n)$:
\[
P_{k,n}(q)=\sum_{{\mathcal I}\in {\mathcal{GN}}_{k,n}} q^{\dim_\mathbb{C} C({\mathcal I})}.
\]


\begin{rem}
For any Grassmann necklace ${\mathcal I}$, the dimension of the cell $C({\mathcal I})$ equals the number of   outgoing edges from the corresponding vertex in the moment graph of $X(k,n)$ (cf. \cite[Corollary 6.4]{LaPu2020}).  There is a bijection between these edges and so called fundamental mutations of the $T$ fixed point corresponding to the Grassmann necklace ${\mathcal I}$ \cite[Theorem~6.13]{LaPu2020}. These combinatorial moves are essential for the proof of Lemma \ref{plus 1}.
\end{rem}

\begin{example}Let $n=3$ and $k=1$. 
By the previous remark, the Poincar\'e polynomial can be read off from the moment graph. Thus, from Example \ref{exple:MGX13} we deduce that
\[
P_{1,3}(q)=1+3q+3q^2.
\]
We notice that $P_{1,3}(q)$ is dual to the Poincar\'e polynomial of the $(1,3)$ tnn Grassmannian, which is $3+3q+q^2 = q^2P_{1,3}(q^{-1})$. This is a general fact (see Theorem \ref{PP}).
\end{example}

Let $\preceq$ be the partial order on $\mathcal{GN}_{k,n}$ coming from the cell closure relation in $X(k,n)$, and recall the partial order $\leq$ on $\mathcal{GN}_{k,n}$ introduced in \S\ref{subsec:Grassmann-Necklace}.
\begin{prop}\label{prop:posetIso}The partial orders $\preceq$ and $\leq$ on $\mathcal{GN}_{k,n}$ coincide.
\end{prop}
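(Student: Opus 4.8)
The plan is to identify both partial orders on $\mathcal{GN}_{k,n}$ with a third, already-understood poset: the Bruhat order on bounded affine permutations $\mathcal{B}_{k,n}$ (equivalently, the reversed closure order of the positroid cells). For the combinatorial order $\leq$, this identification is exactly \cite[Theorem 6.2]{Lam16} together with Remark~\ref{rem:cell-dim-tnn-Grass}, which were recalled in \S\ref{subsec:BoundedPerms}: the map $\mathcal{I}\mapsto f(\mathcal{I})$ is an order-preserving bijection from $(\mathcal{GN}_{k,n},\leq)$ onto $(\mathcal{B}_{k,n},\leq)$. So it suffices to show that $\preceq$ also coincides with (the pullback of) Bruhat order, i.e. that $C(\mathcal{I}')\subseteq\overline{C(\mathcal{I})}$ if and only if $f(\mathcal{I}')\leq f(\mathcal{I})$.

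First I would reduce $\preceq$ to a generating relation. Since the cellular decomposition comes from a $\mathbb{C}^*$-action (the $\alpha$-partition into attracting sets), the closure order is generated by its covering relations, and by Proposition~\ref{prop:MomentGraph} together with \cite[Corollary 6.4]{LaPu2020} and \cite[Theorem 6.13]{LaPu2020} the covers of $\preceq$ are recorded precisely by the edges $\mathcal{I}\to\mathcal{I}'$ of the moment graph: such an edge exists iff $\mathcal{I}'$ is obtained from $\mathcal{I}$ by a single (fundamental) mutation, and each mutation strictly decreases the cell dimension by the comparison in that remark. Dually, the cover relations of Bruhat order on $\mathcal{B}_{k,n}$ are the length-one moves coming from multiplication by a reflection; by Remark~\ref{rem:cell-dim-tnn-Grass} these correspond to the cover relations of the positroid closure order, which in turn (by \cite[Theorem 7.12]{Lam16} / \cite{W05}) correspond to the covers of $(\mathcal{GN}_{k,n},\leq)$. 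Thus the proof amounts to a local statement: show that a single mutation $\mathcal{I}\to\mathcal{I}'$ in the moment-graph sense is the same move as a single cover $\mathcal{I}'\lessdot\mathcal{I}$ in the order $\leq$.

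To do the local comparison I would unwind both descriptions in coordinates. On the moment-graph side, Proposition~\ref{prop:MomentGraph}(2) says a mutation edge $\mathcal{I}\to\mathcal{I}'$ satisfies: there are $a,b,j,j'\in[n]$ with $I_h=I_h'$ for $h\notin[a,b]_a$ and $I_h=(I_h'\setminus\{j'\})\cup\{j\}$ with $j'<_a j$ for $h\in[a,b]_a$; and one already knows $\mathcal{I}'<\mathcal{I}$. Conversely, given a cover $\mathcal{I}'\lessdot\mathcal{I}$ in $\leq$, I would use the bijection with $\mathcal{B}_{k,n}$: a Bruhat cover $f(\mathcal{I}')\lessdot f(\mathcal{I})$ differs by one transposition of values of the affine permutation (with the length dropping by one), and translating back through the explicit recipe $f\mapsto \mathcal{I}(f)$ from \S\ref{subsec:BoundedPerms} changes exactly a "cyclic interval" of the necklace entries by swapping one element for another — which is precisely the shape of a mutation. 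Matching these two descriptions, and checking that the dimension/length bookkeeping lines up (mutations lower $\dim C$ by one, Bruhat covers lower $l(f)$ by one, and $\dim C(\mathcal{I})=$ number of outgoing moment-graph edges $= k(n-k)-l(f(\mathcal{I}))$, the last equality to be established by comparing the Poincaré-polynomial counts or directly), gives the coincidence of cover relations and hence of the two orders.

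The main obstacle is the last matching step: verifying that the combinatorial notion of "mutation" from \cite[Theorem 6.13]{LaPu2020}, phrased in terms of successor-closed subquivers of the coefficient quiver, is literally the same move on Grassmann necklaces as a Bruhat cover of bounded affine permutations — including the bookkeeping that ensures a mutation drops the cell dimension by exactly one (so that no cover of $\preceq$ is missed and none is spurious). I expect this to require a careful but elementary translation through the bijection $\psi$ of \eqref{eqn:psi} and the permutation recipe of \S\ref{subsec:BoundedPerms}; alternatively, one may sidestep part of it by invoking the general result that a cellular decomposition arising from an $\alpha$-partition has closure order generated by moment-graph edges, and then comparing Hasse diagrams directly. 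I would also record, as a sanity check, that both posets have the same rank function $\mathcal{I}\mapsto \dim C(\mathcal{I})$, which pins down the isomorphism uniquely once cover relations agree.
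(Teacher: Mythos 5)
Your route is genuinely different from the paper's: you pass through the order isomorphism $(\mathcal{GN}_{k,n},\leq)\cong(\mathcal{B}_{k,n},\leq)$ and try to match Bruhat covers with single mutations, whereas the paper never analyses covers of $\leq$ at all. Its proof of the direction $\leq\ \Rightarrow\ \preceq$ is a direct induction: given $\mathcal{I}'\leq\mathcal{I}$ it manufactures an explicit chain of mutations from $\psi^{-1}(\mathcal{I})$ down to $\psi^{-1}(\mathcal{I}')$, choosing at each step which subsegment to move and verifying in coordinates (via $v^{(a)}_h=b_{a-h,h}$ and the choice of $\overline{c}$, $\overline{e}$) that the target positions are unoccupied, so that each move really is a mutation between successor-closed subquivers. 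The direction $\preceq\ \Rightarrow\ \leq$ is the same in both treatments: $\preceq$ is generated by mutations, and the proof of Proposition~\ref{prop:MomentGraph} shows each mutation strictly decreases $\leq$.

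The problem with your plan is that its entire content sits in the step you defer: that every cover of $(\mathcal{GN}_{k,n},\leq)$, i.e.\ every Bruhat cover inside $\mathcal{B}_{k,n}$, is realized by a single admissible mutation. Translating a reflection move $f\mapsto f\cdot t$ through the recipe $f\mapsto\mathcal{I}(f)$ does change the necklace on a cyclic interval by swapping one element for another, which has the right shape; but you must still check that the corresponding move on coefficient quivers is a \emph{legal} mutation, i.e.\ that the shifted subsegment lands on vertices not already occupied by $\psi^{-1}(\mathcal{I})$ --- exactly the verification the paper's induction performs. Until that is written out, this is a plan rather than a proof. Two of your supporting assertions are also inaccurate: the moment-graph edges are not exactly the covers of $\preceq$ (the paper notes after Example~\ref{exple:MGX13} that the Hasse diagram is in general a proper subgraph of the moment graph), though this is harmless since generation by mutations is all you need; and the dimension bookkeeping you offer as a sanity check is both reversed ($\dim_{\bC} C(\mathcal{I})=l(f(\mathcal{I}))$, not $k(n-k)-l(f(\mathcal{I}))$, by Theorem~\ref{trm:cell-dim-quiver-Grass}) and circular, since that theorem and the Poincar\'e-polynomial comparison are deduced \emph{from} Proposition~\ref{prop:posetIso}, not available before it.
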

\begin{proof}
We show that the map $\psi$ from \eqref{eqn:psi} is a poset isomorphism, where we also use the notation $\preceq$ to denote the cell closure relation order on ${\rm SC}_{\mathbf{k}}(U_{[n]})$.

Since $\preceq$ is in our case generated by the mutations, it follows immediately from the proof of Proposition \ref{prop:MomentGraph} that $Q''\preceq Q'$ implies $\psi(Q'')\leq \psi(Q')$.

Assume now that $\mathcal{I},\mathcal{I}'\in \mathcal{GN}_{k,n}$ are such that $\mathcal{I}'\leq \mathcal{I}$, and denote $Q':=\psi^{-1}(\mathcal{I})$, $Q'':=\psi^{-1}(\mathcal{I}')$. For any $a\in[n]$ let
\[
{\mathbf h}^{(a)}:=(h_1^{(a)}<h_2^{(a)}<\ldots< h_k^{(a)}), \quad 
{\mathbf l}^{(a)}:=(l_1^{(a)}<l_2^{(a)}<\ldots< l_k^{(a)})\]
be such that 
\[
Q''_0\cap B^{(a)}=\left\{v^{(a)}_{h_j^{(a)}}\mid j\in[k]\right\}, \quad Q'_0\cap B^{(a)}=\left\{v^{(a)}_{l_j^{(a)}}\mid j\in[k]\right\}.
\]
By definition of $\leq$ and $\psi$, we have that \[
h_j^{(a)}\geq l_j^{(a)} \quad\hbox{for all }j\in[k], \ a\in[n].
\]
Thus, we want to show that there exists a sequence of mutations from $Q'$ to $Q''$. We proceed by induction on $d=n-\#\{a\mid \mathbf{h}^{(a)}=\mathbf{l}^{(a)}\}$. 

The base case is $\mathbf{h}^{(a)}=\mathbf{l}^{(a)}$ for all $a\in[n]$, that is $Q'=Q''$, and there is nothing to be shown. Otherwise, there exists an $\overline{a}\in[n]$ such that $\mathbf{h}^{(\overline{a})}\neq\mathbf{l}^{(\overline{a})}$. We hence find an $r\in[k]$ such that $h_j^{(\overline{a})}=l_j^{(\overline{a})}$ for all $j\in[r+1,k]$ and $h_r^{(\overline{a})}>l_r^{(\overline{a})}$. To save notation we write $h$ and $l$ for $h_r^{(\overline{a})}$ and $l_r^{(\overline{a})}$, respectively. 
Define
\[
\overline{c}:=\min_{\overline{a}-l+1}\left\{c\mid  v^{(c)}_{l+c-\overline{a}}\in Q'_0\cap B^{(c)}\right\}
, \quad 
\overline{e}:=\min_{\overline{a}-l+1}\left\{e\mid v^{(e)}_{h+e-\overline{a}}\in Q'_0\cap B^{(e)}\right\}.
\]
Observe that $v^{(\overline{a} )}_h=b_{\overline{a}-h,h}$, so that  
$v^{(\overline{c}+t-1)}_{h+\overline{c}+t-\overline{a}-1}\not\in Q'_0\cap B^{\overline{c}+t-1}$ for any $t\in[\overline{e}-\overline{c}]_{\overline{c}}$.
Thus, the following successor closed subquiver is obtained from $Q'$ by mutation:
\begin{align*}
Q'''&:=\left(Q'\setminus\left\{v_{l+\overline{c}-\overline{a}}^{(\overline{c})}\to \ldots \to v^{(\overline{a})}_{l}\to\ldots \to v^{(\overline{e}-1)}_{l+\overline{e}-\overline{a}-1}\right\}\right)\\
&\qquad\cup\left\{v_{h+\overline{c}-\overline{a}}^{(\overline{c})}\to \ldots \to v^{(\overline{a})}_{h}\to\ldots \to v^{(\overline{e}-1)}_{h+\overline{e}-\overline{a}-1}\right\}
\end{align*}
Note that, thanks to the choice of $\overline{c}$, we have $Q''\preceq Q'''\prec Q'$. 
Let 
\[
{{\bf l}'}^{(a)}={l'}_{1}^{(a)}<{l'}_{2}^{(a)}<\ldots<{l'}_{k}^{(a)}\ \ \hbox{be such that}\ \ Q_0'''\cap B^{(a)}=\left\{v_{{l'}^{(a)}_j}^{(a)} \mid j\in[k]\right\}, 
\]
then $h^{(\overline{a})}_j={l'}_j^{(\overline{a})}$ for any $j\in[r,k]$, and ${\bf h}^{(a)}={{\bf l}'}^{(a)}$ whenever ${\bf h}^{(a)}={\bf l}^{(a)}$. If $r=1$ we are done, otherwise we proceed recursively until we get equality for any $j\in[k]$ and we denote the resulting successor closed subquiver by $\overline{Q}$. By construction now 
\[
n-\#\{a\mid Q''_0\cap B^{(a)}=\overline{Q}_0\cap B^{(a)}\}<d
\]
and we can apply the inductive step to complete the proof.
\end{proof}

\begin{lem}\label{plus 1}
Assume that for two Grassmann necklaces ${\mathcal I}$ and ${\mathcal J}$ the closure of the cell
$C({\mathcal I})$ contains the cell $C({\mathcal J})$ and there is no cell $C$ such that
\[
\overline{C({\mathcal I})}\supset C,\qquad \overline{C}\supset C({\mathcal J}).
\]
Then $\dim_\mathbb{C} C({\mathcal I}) = \dim_\mathbb{C} C({\mathcal J}) + 1$.
\end{lem}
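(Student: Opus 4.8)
The plan is to translate the statement into the combinatorics of successor-closed subquivers via the bijection $\psi$ and Proposition \ref{prop:posetIso}, and then exploit the fact that the cell closure order $\preceq$ is generated by the mutations of Definition \ref{dfn:mutation}, with $\dim_\mathbb{C} C(\mathcal{I})$ equal to the number of outgoing edges in the moment graph. First I would record the key dimension identity: by \cite[Corollary 6.4]{LaPu2020} (and the remark preceding this lemma), $\dim_\mathbb{C} C(\mathcal{I})$ is the number of fundamental mutations one can apply to the successor-closed subquiver $Q_\mathcal{I}:=\psi^{-1}(\mathcal{I})$, equivalently the number of edges $\mathcal{I}\to\mathcal{I}''$ leaving $\mathcal{I}$ in the moment graph $\mathcal{G}$. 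So the claim becomes: if $\mathcal{J}$ is obtained from $\mathcal{I}$ by a \emph{single} mutation (this is exactly the covering hypothesis, since by Proposition \ref{prop:MomentGraph} and Proposition \ref{prop:posetIso} the edges of $\mathcal{G}$ generate $\preceq$, and a covering relation with no intermediate cell must be realized by one mutation), then the number of outgoing mutation-edges drops by exactly one.

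The core of the argument is therefore a local analysis of how a single mutation changes the set of available mutations. Let the mutation from $Q_\mathcal{I}$ to $Q_\mathcal{J}$ move a sub-segment $v^{(a)}_{p}\to\cdots\to v^{(a+l-1)}_{p+l-1}$ inside the segment $s_{a-p}$ up to $v^{(a)}_{p'}\to\cdots\to v^{(a+l-1)}_{p'+l-1}$ with $p<p'$. The plan is to set up a near-bijection between fundamental mutations available at $Q_\mathcal{I}$ and those available at $Q_\mathcal{J}$: mutations that touch segments other than $s_{a-p}$, or touch $s_{a-p}$ in a range disjoint from the moved block, correspond to each other unchanged; the delicate part is the mutations whose moving block lies on $s_{a-p}$ in the region affected by the move. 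Here one shows that exactly one fundamental mutation of $Q_\mathcal{I}$ — the one realizing the covering step to $\mathcal{J}$ — has no counterpart at $Q_\mathcal{J}$, while every other outgoing edge at $Q_\mathcal{I}$ is matched (possibly after reparametrizing its length/shift) with a distinct outgoing edge at $Q_\mathcal{J}$, and no new outgoing edges are created at $Q_\mathcal{J}$ that are not accounted for. Counting gives $\dim_\mathbb{C} C(\mathcal{I})=\dim_\mathbb{C} C(\mathcal{J})+1$.

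The main obstacle I anticipate is precisely bookkeeping this near-bijection on the boundary of the moved block: a fundamental mutation is a \emph{minimal} move (length-one shift of a minimal successor-closed block), and after performing the covering mutation the successor-closedness constraints shift, so a block that was minimal at $Q_\mathcal{I}$ may cease to be minimal — or become blocked — at $Q_\mathcal{J}$, and conversely. I would handle this by working entirely in the coordinates $\mathbf{h}^{(a)}=(h_1^{(a)}<\cdots<h_k^{(a)})$ describing $Q_\mathcal{I}_0\cap B^{(a)}$, as in the proof of Proposition \ref{prop:posetIso}: a covering mutation increments one coordinate $h_r^{(\overline a)}$ by one along a maximal "free" stretch of a segment, and one can check directly that the set of positions $(r,\overline a)$ where a $+1$ move is legal, which indexes the outgoing edges, loses exactly the position used and gains nothing. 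An alternative, cleaner route — which I would use as a cross-check or fallback if the direct count gets unwieldy — is to invoke Proposition \ref{prop:posetIso} together with the known fact that in the tnn Grassmannian the reversed cell poset is graded by $\dim\Pi_{\mathcal M(f)} = k(n-k)-l(f)$ (Remark \ref{rem:cell-dim-tnn-Grass}): covering relations there change length, hence dimension, by exactly one, and Theorem \ref{A}/Proposition \ref{prop:posetIso} identify the two posets, so one only needs that $\dim_\mathbb{C} C(\mathcal{I})$ is compatible with that grading, which again follows from the edge-counting description of cell dimension.
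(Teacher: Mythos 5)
Your reduction is the right one and matches the paper's setup: the covering hypothesis means $p(\mathcal{J})$ is obtained from $p(\mathcal{I})$ by a single fundamental mutation that factors through no other mutation, and $\dim_\mathbb{C} C(\mathcal{I})$ equals the number of fundamental mutations (outgoing moment-graph edges) at $p(\mathcal{I})$. But the core of the lemma --- that this count drops by exactly one --- is precisely the step you leave as ``one can check directly,'' and the mechanism you propose for the check rests on a false premise. Fundamental mutations are \emph{not} length-one shifts of minimal blocks, and the outgoing edges are not indexed by ``positions $(r,\overline a)$ where a $+1$ move is legal'': already for $X(1,3)$ the fixed point $222$ has two outgoing edges, with labels $\alpha_{2,1}-\delta$ and $\alpha_{3,1}-2\delta$ (Example \ref{exple:MGX13}), so one of the two fundamental mutations shifts its subsegment by $2$; moreover the covering $222\to 223$ is itself realized by a shift of size $2$, contradicting your description of a covering move as incrementing one coordinate $h_r^{(\overline a)}$ by one. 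So neither your indexing of the outgoing edges nor your model of the covering step is correct, and the announced near-bijection is never actually set up. The paper performs the count differently: it writes $\dim_\mathbb{C} C(\mathcal{I})=\sum_j h_{\mathcal I}(s_j)$, where the height $h_{\mathcal I}(s_j)$ is the number of ``holes'' of $Q(U_{[n]},B)$ with index larger than the starting point of the segment $s_j$ (via \cite[Proposition~3.19]{LaPu2021}), and tracks each summand under the covering mutation; the hypothesis that the mutation factors through no other one is used to show that no intermediate segment starts over the support of the moved subsegment, which forces $h_{\mathcal J}(s_{j_1}')=h_{\mathcal I}(s_{j_1})-1$ while all other heights are preserved. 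Some version of this local analysis is unavoidable and is the missing content of your plan.

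Your fallback route is circular. Proposition \ref{prop:posetIso} identifies the cell poset with $\mathcal{B}_{k,n}$, which is graded by $l(f)$, so covering relations change $l$ by one; but to convert that into a statement about $\dim_\mathbb{C} C(\mathcal{I})$ you need $\dim_\mathbb{C} C(\mathcal{I})=l(f(\mathcal{I}))$, which is Theorem \ref{trm:cell-dim-quiver-Grass} --- and that theorem is deduced in the paper \emph{from} the present lemma. The edge-counting description of the dimension does not by itself imply that the dimension function is the rank function of the abstract grading; that implication is exactly what has to be proved here.
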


\begin{proof}
By Proposition~\ref{prop:MomentGraph} and Proposition~\ref{trm:cells-are-strata-part-II} every cell $C(\mathcal{I})$ of $X(k,n)$ contains a unique $T$ fixed point $p(\mathcal{I})$ together with a unique subquiver $S(\mathcal{I}) \subset Q(U_{[n]},B)$ consisting of segments $s_1, \dots, s_n$. Now the condition that there is no cell between $C(\mathcal{J})$ and $C(\mathcal{I})$ implies that there is a corresponding fundamental mutation $\mu: p(\mathcal{I}) \to p(\mathcal{J})$ and it does not factor through any other mutation. On the coefficient quivers $S(\mathcal{I})$ and $S(\mathcal{J})$ we can view this mutation as cutting exactly one predecessor closed subquiver of a segment $s_{j_1}$ of $S(\mathcal{I})$ and adding it to the start of a segment $s_{j_1}$ of $S(\mathcal{I})$ to obtain the segments $s_{j_1}'$ and $s_{j_2}'$ of $S(\mathcal{J})$ (see \cite[Definition~2.15]{LaPu2021} for more details). 

By convention the index of the element of $B$ corresponding to the starting point of $s_{j_1}$ is larger than the index of the basis element corresponding to the start of $s_{j_2}'$, i.e. mutations are always index increasing and hence we also speak of a downwards movement of the subsegment. All the other segments remain unchanged and hence coincide for both points. 

The height $h_\mathcal{I}(s_{j})$ is defined as the number of the mutations of $p(\mathcal{I})$ which start at the segment $s_{j}$. For $U_{[n]}$ this equals the number of points in $Q(U[n],B)$ which have larger index than the start of  $s_{j}$ and are not contained in $S(\mathcal{I})$. We sometimes refer to this as counting the holes below the start of $s_{j}$ (cf. \cite[Remark~6]{CFFFR2017}).

The starting points of the segments between $s_{j_1}$ and $s_{j_2}$ can not live over the support of the moved subsegment in $\Delta_n$ because otherwise $\mu$ would factor through them and this implies that they contribute as holes for the height functions $h_\mathcal{I}(s_{j_1})$ and $h_\mathcal{J}(s_{j_1}')$. Hence we obtain $h_\mathcal{J}(s_{j_1}') = h_\mathcal{I}(s_{j_1})-1$.

Every mutation starting at $s_{j_2}$ can also be started at $s_{j_2}'$ so we immediately obtain $h_\mathcal{J}(s_{j_2}') \geq h_\mathcal{I}(s_{j_2})$. For the other segments we distinguish two cases: If the segment $S_j$ is between $s_{j_1}$ and $s_{j_2}$, by the discussion above, its starting point can not be above the support of the moved segment. Hence both $s_{j_1}$ and $s_{j_2}$ have marked points in $Q(U[n],B)$ below the starting point of $s_j$ such that they both contribute as zero to the height functions $h_\mathcal{I}(s_{j})$ and $h_\mathcal{J}(s_{j})$. 

In the other case the start of the segment $s_j$ is either below the points of $s_{j_2}'$ and the height function does not change at all, or above $s_{j_1}$ and in this case the value of the height function stays the same since the role of the points on $s_{j_1}$ is exchanged with the points on $s_{j_2}'$ and vice versa for $s_{j_2}$ and $s_{j_1}'$.
By \cite[Proposition~3.19]{LaPu2021} we obtain $\dim_\mathbb{C} C(\mathcal{J}) \leq \dim_\mathbb{C} C(\mathcal{I}) -1$ and the above computation shows that equality is achieved in our setting.
\end{proof}


\begin{thm}\label{trm:cell-dim-quiver-Grass}
The dimension of the cell $C({\mathcal I})$ is equal to the length of the bounded affine
permutation $f({\mathcal I})$.
\end{thm}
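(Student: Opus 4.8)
The plan is to deduce the statement from Proposition~\ref{prop:posetIso} and Lemma~\ref{plus 1}, using only the classical combinatorics of bounded affine permutations recalled in \S\ref{subsec:BoundedPerms}. The point is that both $\dim_{\mathbb{C}} C(\mathcal{I})$ and $l(f(\mathcal{I}))$ will be identified with the rank function of one and the same graded poset.

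First I would check that $(\mathcal{GN}_{k,n}, \leq)$ is a graded poset with rank function $\mathcal{I}\mapsto l(f(\mathcal{I}))$. By \S\ref{subsec:BoundedPerms} the assignment $\mathcal{I}\mapsto f(\mathcal{I})$ is an isomorphism of posets onto $(\mathcal{B}_{k,n},\leq)$, where $\leq$ is induced from the Bruhat order on $(k,n)$ affine permutations, and $\mathcal{B}_{k,n}$ is a lower order ideal there by \cite{KLS13}. Hence a covering relation inside $\mathcal{B}_{k,n}$ is a covering relation of the ambient Bruhat order, so it raises the length $l$ by exactly $1$; together with the chain property of the Bruhat order and the fact that $\mathrm{id}_k$ (with $l(\mathrm{id}_k)=0$) is the unique minimal element, this yields the gradedness with rank $l$. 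Transporting along the poset isomorphism, $(\mathcal{GN}_{k,n},\leq)$ is graded by $\mathcal{I}\mapsto l(f(\mathcal{I}))$ and its unique minimal element is $\mathcal{I}_0$, the necklace with $(I_0)_a=(a,a+1,\dots,a+k-1)$, which corresponds to $\mathrm{id}_k$. By Proposition~\ref{prop:posetIso} the same holds for the cell-closure order $\preceq$ on $\mathcal{GN}_{k,n}$.

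Next I would run an induction along a saturated chain. Since $\mathcal{I}_0$ is minimal for $\preceq$, one has $\overline{C(\mathcal{I}_0)}=C(\mathcal{I}_0)$, and a closed affine cell inside the projective variety $X(k,n)$ must be a point, so $\dim_{\mathbb{C}}C(\mathcal{I}_0)=0$. Given any $\mathcal{I}\in\mathcal{GN}_{k,n}$, choose a saturated chain $\mathcal{I}_0=\mathcal{I}^{(0)}\lessdot\mathcal{I}^{(1)}\lessdot\cdots\lessdot\mathcal{I}^{(m)}=\mathcal{I}$ in $(\mathcal{GN}_{k,n},\preceq)$; by the previous step $m=l(f(\mathcal{I}))$. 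Each step $C(\mathcal{I}^{(j-1)})\lessdot C(\mathcal{I}^{(j)})$ is a covering relation of the cell-closure order, i.e. exactly the situation of Lemma~\ref{plus 1}, so $\dim_{\mathbb{C}}C(\mathcal{I}^{(j)})=\dim_{\mathbb{C}}C(\mathcal{I}^{(j-1)})+1$. Telescoping gives $\dim_{\mathbb{C}}C(\mathcal{I})=m=l(f(\mathcal{I}))$.

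I do not expect a genuine obstacle: the geometric content is already contained in Lemma~\ref{plus 1} and Proposition~\ref{prop:posetIso}, and what remains is bookkeeping with the Bruhat order. The only places demanding a little care are (i) the assertion that covering relations survive passage to the lower order ideal $\mathcal{B}_{k,n}$ --- which is precisely where the ``lower order ideal'' property is used --- and (ii) transporting the rank function correctly through the poset isomorphism $\mathcal{GN}_{k,n}\cong\mathcal{B}_{k,n}$. As a consistency check, the resulting identity $\dim_{\mathbb{C}}C(\mathcal{I})=l(f(\mathcal{I}))=k(n-k)-\dim\Pi_{{\mathcal M}(f({\mathcal I}))}$ matches Remark~\ref{rem:cell-dim-tnn-Grass} and the reversed-poset statement of Theorem~\ref{A}.
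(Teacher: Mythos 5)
Your proposal is correct and follows essentially the same route as the paper: identify the cell-closure poset with $\mathcal{B}_{k,n}$ via Proposition~\ref{prop:posetIso}, observe that both posets share a unique minimal element of dimension/length zero, use the lower-order-ideal property to see that covering relations in $\mathcal{B}_{k,n}$ raise the length by exactly one, and then apply Lemma~\ref{plus 1} inductively along a saturated chain. Your write-up is somewhat more explicit than the paper's (in particular the observation that a closed affine cell in a projective variety must be a point), but the argument is the same.
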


\begin{proof}
By Proposition \ref{prop:posetIso}, we know that the poset structure of ${\mathcal{GN}}_{k,n}$ coming from the closure relations in $X(k,n)$ is isomorphic to the
poset $\mathcal{B}_{k,n}$ (see \cite{KLS14,Lam16}). 
We note that both posets contain a unique minimal element -- a zero-dimensional cell corresponding to
the collection $(I_j)_j$ with $I_j=\{j,j+1,\dots,j+k-1\}$ (all the numbers are taken mod $n$) and the length zero element
${\rm id}_k$. Since the set ${\mathcal B}_{k,n}$ of bounded affine permutations can be identified with the lower order ideal
in the affine Weyl group, we conclude that if $f>g$ with no element $h\in {\mathcal B}_{k,n}$ in between one has $l(f)=l(g)+1$. 
Now Lemma \ref{plus 1} completes the proof. 
\end{proof}

\begin{example}
If $\mathcal{I}=\{\{1,3\},\{3,4\},\{3,4\},\{1,4\}\}\in\mathcal{GN}_{2,4}$, we then have
$\dim_\mathbb{C}(C(\mathcal{I})) =2$.
\end{example}
The next result follows immediately from Theorem~\ref{trm:cell-dim-quiver-Grass} and Remark~\ref{rem:cell-dim-tnn-Grass}.
\begin{thm}\label{PP}
The Poincar\'e polynomial of $X(k,n)$ is dual to the Poincar\'e polynomial of the tnn Grassmannian. 
\end{thm}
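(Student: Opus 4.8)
The plan is to derive Theorem~\ref{PP} as a formal consequence of the two ingredients already in hand: the cell-dimension formula of Theorem~\ref{trm:cell-dim-quiver-Grass} and the description of the tnn cell dimensions recalled in Remark~\ref{rem:cell-dim-tnn-Grass}. Since $X(k,n)$ admits a cellular decomposition (Theorem~\ref{trm:cellular-decomposition}), its Poincar\'e polynomial is $P_{k,n}(q)=\sum_{\mathcal{I}\in\mathcal{GN}_{k,n}}q^{\dim_\mathbb{C}C(\mathcal{I})}$, and likewise the tnn Grassmannian ${\rm Gr}(k,n)_{\ge 0}$ has Poincar\'e polynomial $\widetilde P_{k,n}(q)=\sum_{f\in\mathcal{B}_{k,n}}q^{\dim_\mathbb{R}\Pi_{\mathcal{M}(f)}}$, both sums being indexed by the same finite set once we pass through the bijections $\mathcal{GN}_{k,n}\leftrightarrow\mathcal{B}_{k,n}$ of \S\ref{subsec:BoundedPerms}.

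First I would fix the bijection $\mathcal{I}\mapsto f(\mathcal{I})$ between $\mathcal{GN}_{k,n}$ and $\mathcal{B}_{k,n}$. Then, by Theorem~\ref{trm:cell-dim-quiver-Grass}, $\dim_\mathbb{C}C(\mathcal{I})=l(f(\mathcal{I}))$, while by Remark~\ref{rem:cell-dim-tnn-Grass} the real dimension of the corresponding positroid cell is $\dim_\mathbb{R}\Pi_{\mathcal{M}(f(\mathcal{I}))}=k(n-k)-l(f(\mathcal{I}))$. Substituting, the two polynomials become
\[
P_{k,n}(q)=\sum_{\mathcal{I}\in\mathcal{GN}_{k,n}}q^{\,l(f(\mathcal{I}))},\qquad
\widetilde P_{k,n}(q)=\sum_{\mathcal{I}\in\mathcal{GN}_{k,n}}q^{\,k(n-k)-l(f(\mathcal{I}))},
\]
and since $l(f(\mathcal{I}))$ ranges over the same multiset of values in both expressions (the reindexing $\mathcal{I}\leftrightarrow f(\mathcal{I})$ being a bijection), the second sum is obtained from the first by the substitution $q\mapsto q^{-1}$ followed by multiplication by $q^{k(n-k)}$. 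Hence $\widetilde P_{k,n}(q)=q^{k(n-k)}P_{k,n}(q^{-1})$, which is precisely the statement that the two Poincar\'e polynomials are dual.

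There is essentially no obstacle here: the work has all been front-loaded into Theorem~\ref{trm:cell-dim-quiver-Grass} (whose proof rests on Proposition~\ref{prop:posetIso} and Lemma~\ref{plus 1}) and into the classical facts about $\mathcal{B}_{k,n}$ recalled from \cite{KLS13,Lam16}. The only point requiring a word of care is bookkeeping: one must make sure the cellular decomposition of $X(k,n)$ has cells in real dimensions that are twice the complex dimensions (so that the topological Poincar\'e polynomial in $q$ agrees with the generating function $\sum q^{\dim_\mathbb{C}C(\mathcal{I})}$ in the variable customarily written $q=t^2$), and that on the tnn side we are comparing the Poincar\'e polynomial in the same normalization; with the conventions fixed in Section~\ref{sec:poincare-poly} this is immediate, and the theorem follows. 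I would also add, as in the preceding example with $n=3$, $k=1$, the sanity check $P_{1,3}(q)=1+3q+3q^2$ versus $3+3q+q^2=q^2P_{1,3}(q^{-1})$ to illustrate the statement.
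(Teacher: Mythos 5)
Your proposal is correct and is exactly the argument the paper intends: Theorem~\ref{PP} is stated there as an immediate consequence of Theorem~\ref{trm:cell-dim-quiver-Grass} and Remark~\ref{rem:cell-dim-tnn-Grass}, combined via the bijection $\mathcal{GN}_{k,n}\leftrightarrow\mathcal{B}_{k,n}$, which is precisely the computation you carry out. Your version just makes the bookkeeping explicit.
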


\begin{rem}
In contrast to \cite[Remark~5.14]{LaPu2021}, Theorem~\ref{trm:cells-are-strata-part-I} implies that the closure of every cell in $X(k,n)$ is the union of smaller cells. Moreover by Proposition~\ref{prop:MomentGraph} we obtain it explicitly as
\[ \overline{C({\mathcal I})} =  \bigcup_{{\mathcal J}\in {\mathcal{GN}}_{k,n} \ {\rm s.t.:}  \ {\mathcal J} \leq {\mathcal I}} C({\mathcal J}).\]
The moment graph of $\overline{C({\mathcal I})}$ is the full subgraph of $\mathcal{G}$ from Proposition~\ref{prop:MomentGraph} on the vertices ${\mathcal J}\in {\mathcal{GN}}_{k,n}$ with ${\mathcal J} \leq {\mathcal I}$ and the computation of the corresponding $T$-equivariant cohomology and the Poincar\'e polynomial is done in the same way.
\end{rem}

\section{Resolution of singularities}\label{sec:desingularization}
The goal of this section is to present a construction of a  resolution of singularities for the irreducible
components of $X(k,n)$. In short, an irreducible component of $X(k,n)$ will be desingularized by a
quiver Grassmannian for the extended cyclic quiver $\tilde\Delta_n$ (see the definition below). 
For various constructions of desingularizations in the context of quiver
Grassmannians see \cite{CFR13,CFR14,FF13,FFL14,KS14,S17}. 

\subsection{The extended cyclic quiver and extended representations}
Let us denote by $\widetilde\Delta_n$ the extended cyclic quiver defined by the following data:
\begin{itemize}
    \item the vertices of $\widetilde\Delta_n$ are of the form $(i,r)$ with $i\in\bZ_n$, $r=1,\dots,n$,
    \item there are two types of arrows: $\alpha_{i,r}:(i,r)\to (i+1,r+1)$, $r<n$ 
    and $\beta_{i,r}: (i,r)\to (i,r-1)$, $r>1$.
\end{itemize}
\begin{example}
\begin{enumerate}
    \item The extended cyclic quiver for $n=2$ looks as follows:
\[
\begin{tikzcd}
&  (2,2) \ar[dr, "\beta_{2,2}"] &\\
(1,1) \ar[ur, "\alpha_{1,1}"] &  & (2,1) \ar[dl, "\alpha_{2,1}"]\\
&  (1,2) \ar[ul, "\beta_{1,2}"]&
\end{tikzcd}
\]
    \item 
The extended cyclic quiver for $n=3$ looks as follows:
\[
\begin{tikzcd}
  &  & (3,3) \ar[drr, "\beta_{3,3}"] & &  \\
 (2,2) \ar[urr, "\alpha_{2,2}"] \ar[rr, "\beta_{2,2}"] &  &  (2,1) \ar[rr, "\alpha_{2,1}"] & &  (3,2) \ar[dd, "\alpha_{3,2}"] \ar[dl, "\beta_{3,2}"] \\
 & (1,1) \ar[ul, "\alpha_{1,1}"] & &   (3,1)\ar[dl, "\alpha_{3,1}"] &  \\
(2,3) \ar[uu, "\beta_{2,3}"]  & & (1,2) \ar[ll, "\alpha_{1,2}"] \ar[ul, "\beta_{1,2}"]  & &   (1,3) \ar[ll, "\beta_{1,3}"]
\end{tikzcd}
\]

\end{enumerate}
\end{example}

Now let $M=((M^{(i)})_{i\in \mathbb{Z}_n}, (M_{\alpha_i})_{i\in \bZ_n})$ be a $\Delta_n$ representation.  
 We define the extended $\widetilde \Delta_n$
module  $\widetilde M$  as follows. Let $M^{(i,1)}=M^{(i)}$ for $i\in\mathbb{Z}_n$ and let
\[
\widetilde M^{(i,r)}= M_{\alpha_{i-1}}\dots M_{\alpha_{i-r+1}} M^{(i-r+1)} \qquad (r=2,\dots,n).
\]
In particular, $\widetilde M^{(i,r)}\subset M^{(i)}$ for all $r$. We also have natural surjections and inclusions \begin{gather}
\widetilde M^{(i,n)}\twoheadleftarrow \widetilde M^{(i-1,n-1)}\twoheadleftarrow \dots\twoheadleftarrow \widetilde M^{(i-n+1,1)} = M^{(i-n+1)}\label{surj},\\  
\widetilde M^{(i,n)}\subset \widetilde M^{(i,n-1)} \subset \dots\subset \widetilde M^{(i,1)} = M^{(i)}\label{incl}.
\end{gather}
We complete the definition of the extended representation $\widetilde M$ by letting
\[
\widetilde M_{\alpha_{i,r}}:  \widetilde M^{(i,r)} \to \widetilde M^{(i+1,r+1)},\quad
\widetilde M_{\beta_{i,r}}:  \widetilde M^{(i,r)} \to \widetilde M^{(i,r-1)}
 \]
to be the maps from \eqref{surj} and \eqref{incl} respectively.

\begin{rem}\label{recover}
Let us identify $\widetilde M^{(i,1)}$ with $M^{(i)}$. Then the maps $M_{\alpha_i}$ are recovered as
compositions $\widetilde M_{\beta_{i+1,2}}\widetilde M_{\alpha_{i,1}}$.
\end{rem}

\begin{example}
Let $n=2$ and let $M=U(1;2)\oplus U(2;2)\in \mathrm{rep}_\mathbb{C}(\Delta_2)$. Then the $\widetilde\Delta_2$ module
$\widetilde M$ looks as follows
\[
\begin{tikzcd}
&  \bC \ar[dr, "\binom{1}{0}"] &\\
\bC^2 \ar[ur, "(1 0)"] &  & \bC^2 \ar[dl, "(0 1)"]\\
&  \bC \ar[ul, "\binom{0}{1}"] &
\end{tikzcd}.
\]
\end{example}

\begin{example}
Let $M=U_{[n]}=\bigoplus_{i=1}^n U(i;n)$, so that $\dim_\bC \widetilde M^{(i,r)}=n-r+1$. To describe the 
maps between the vector spaces $\widetilde M^{(i,r)}$ we use the second basis from Definition \ref{dfn:different-relisation-of-U_[n]}
Then $\widetilde M^{(i,r)}$ is identified with the linear span of vectors
$v^{(i)}_h$ with $h=r, \dots,n$ and the maps $\widetilde M_{\alpha_{i,r}}$ and $\widetilde M_{\beta_{i,r}}$ are given by
\[
\widetilde M_{\beta_{i,r}} v^{(i)}_h = v^{(i)}_h,\ \widetilde M_{\alpha_{i,r}} v^{(i)}_h = v^{(i+1)}_{h+1}, \text{ if } h<n,\ \widetilde M_{\alpha_{i,r}} v^{(i)}_n =0.
\]
\end{example}

\begin{lem}\label{lem:bounding-relations-part-1}
For a $\Delta_n$ module $M$ the extended representation $\widetilde M$ is a representation of the bounded quiver $\widetilde \Delta_n$ with the relations $\beta_{i+1,r+1}\alpha_{i,r} =\alpha_{i,r-1} \beta_{i,r}$
for all $i\in\bZ_n$ and $r=2,\dots,n-1$.
\end{lem}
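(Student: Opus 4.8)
The plan is to unwind the definition of $\widetilde M$ until every map occurring in the relation is visibly either a restriction of the single map $M_{\alpha_i}$ or an honest inclusion of nested subspaces of $M^{(i)}$, at which point the relation becomes a one-line diagram chase. First I would record the two basic identifications. Since $\widetilde M^{(i,r)} = M_{\alpha_{i-1}}\cdots M_{\alpha_{i-r+1}}M^{(i-r+1)}\subseteq M^{(i)}$, applying $M_{\alpha_i}$ yields exactly $M_{\alpha_i}M_{\alpha_{i-1}}\cdots M_{\alpha_{i-r+1}}M^{(i-r+1)} = \widetilde M^{(i+1,r+1)}$; thus the surjection in \eqref{surj} defining $\widetilde M_{\alpha_{i,r}}$ is nothing but the restriction $M_{\alpha_i}|_{\widetilde M^{(i,r)}}\colon \widetilde M^{(i,r)}\to \widetilde M^{(i+1,r+1)}$. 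Likewise, from $M_{\alpha_{i-r+1}}M^{(i-r+1)}\subseteq M^{(i-r+2)}$ one gets $\widetilde M^{(i,r)}\subseteq \widetilde M^{(i,r-1)}$, and $\widetilde M_{\beta_{i,r}}$ is, by \eqref{incl}, precisely this inclusion. These are exactly the identifications already implicit in Remark~\ref{recover}, and they show in passing that $\widetilde M$ is a well-defined $\widetilde\Delta_n$ representation: all listed maps land in the prescribed vector spaces.

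Next I would verify the relation $\widetilde M_{\beta_{i+1,r+1}}\widetilde M_{\alpha_{i,r}} = \widetilde M_{\alpha_{i,r-1}}\widetilde M_{\beta_{i,r}}$ as maps $\widetilde M^{(i,r)}\to \widetilde M^{(i+1,r)}$, for $i\in\bZ_n$ and $2\le r\le n-1$ — this range being exactly where all four maps are simultaneously defined ($r\ge 2$ is needed for $\widetilde M_{\beta_{i,r}}$, and $r\le n-1$ for $\widetilde M_{\alpha_{i,r}}$). Take $x\in \widetilde M^{(i,r)}\subseteq M^{(i)}$. Along the top-then-right path, $\widetilde M_{\alpha_{i,r}}$ sends $x$ to $M_{\alpha_i}x\in \widetilde M^{(i+1,r+1)}$, and then $\widetilde M_{\beta_{i+1,r+1}}$ includes it into $\widetilde M^{(i+1,r)}$; the composite is $x\mapsto M_{\alpha_i}x$, now regarded in $\widetilde M^{(i+1,r)}$. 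Along the left-then-bottom path, $\widetilde M_{\beta_{i,r}}$ includes $x$ into $\widetilde M^{(i,r-1)}$ (the same vector of $M^{(i)}$), and then $\widetilde M_{\alpha_{i,r-1}}$ applies $M_{\alpha_i}$, yielding $M_{\alpha_i}x\in \widetilde M^{(i+1,r)}$. The two outputs agree, using only the chain $M_{\alpha_i}\widetilde M^{(i,r)} = \widetilde M^{(i+1,r+1)}\subseteq \widetilde M^{(i+1,r)}$ from the previous paragraph, which guarantees that all intermediate elements live where they should.

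There is essentially no obstacle here: the content is simply the observation that each arrow of $\widetilde\Delta_n$ acts on $\widetilde M$ either as a restriction of $M_{\alpha_i}$ or as an inclusion of nested subspaces of a fixed $M^{(i)}$, so the two sides of each commutativity square are literally the same partially defined linear map. The only thing demanding care is the index bookkeeping modulo $n$ and confirming the stated range $2\le r\le n-1$, as above. I would also note, for later use, that these $\beta\alpha=\alpha\beta$ relations are the \emph{only} ones that need to be imposed on $\widetilde M$, but that complementary statement is not required for the present lemma.
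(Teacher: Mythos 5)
Your proposal is correct and is exactly the argument the paper leaves implicit: the published proof reads only ``Obvious from the definition of $\widetilde M$,'' and your verification that $\widetilde M_{\alpha_{i,r}}$ is the restriction of $M_{\alpha_i}$, that $\widetilde M_{\beta_{i,r}}$ is an inclusion of nested subspaces of $M^{(i)}$, and that both composites therefore send $x$ to $M_{\alpha_i}x\in\widetilde M^{(i+1,r+1)}\subseteq\widetilde M^{(i+1,r)}$ is precisely what that remark is pointing at. The index range $2\le r\le n-1$ is also handled correctly as the set of $r$ for which all four arrows exist.
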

\begin{proof}
Obvious from the definition of $\widetilde M$. 
\end{proof}

We say that an element of the path algebra of $\widetilde \Delta_n$ is an $\alpha$ path, respectively $\beta$ path, if it is given by the concatenation of maps exclusively  of type $\widetilde M_{\alpha_{\bullet}}$, respectively $\widetilde M_{\beta_\bullet}$.
\begin{lem}\label{lem:bounding-relations-part-2}
Assume that  the length $n$ paths of the path algebra of $\Delta_n$ act trivially on a $\Delta_n$ module $M$.
Then all the length $2n-1$ paths from the path algebra of $\widetilde\Delta_n$ act trivially on $\widetilde M$.
\end{lem}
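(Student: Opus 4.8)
The plan is to evaluate an arbitrary length $2n-1$ path of $\widetilde\Delta_n$ directly on $\widetilde M$ and recognize it as a restriction of a path of $\Delta_n$ of length at least $n$, which then acts trivially by the hypothesis on $M$.

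First I would record the shape of the arrow maps of $\widetilde M$. By construction $\widetilde M^{(i+1,r+1)}=M_{\alpha_i}\widetilde M^{(i,r)}$, so $\widetilde M_{\alpha_{i,r}}$ is the restriction of $M_{\alpha_i}$ to $\widetilde M^{(i,r)}$, while $\widetilde M_{\beta_{i,r}}\colon\widetilde M^{(i,r)}\hookrightarrow \widetilde M^{(i,r-1)}$ is the inclusion from \eqref{incl}. Moreover $\widetilde M^{(i_0,r_0)}=M_{\alpha_{i_0-1}}\cdots M_{\alpha_{i_0-r_0+1}}M^{(i_0-r_0+1)}$, i.e. it is the image of a path of $\Delta_n$ of length $r_0-1$. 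Now let $p$ be a path of $\widetilde\Delta_n$ of length $2n-1$ starting at a vertex $(i_0,r_0)$, let $a$ be the number of its arrows of type $\alpha$ and $b=2n-1-a$ the number of those of type $\beta$. Only the $\alpha$-arrows change the first coordinate, each increasing it by $1$, so $p$ terminates at a vertex $(i_0+a,r_L)$; since the $\beta$-arrows act on $\widetilde M$ by inclusions, traversing $p$ amounts to applying $M_{\alpha_{i_0}}$, then $M_{\alpha_{i_0+1}}$, \dots, and finally $M_{\alpha_{i_0+a-1}}$. Precomposing the induced map $\widetilde M^{(i_0,r_0)}\to\widetilde M^{(i_0+a,r_L)}$ with the surjection $M^{(i_0-r_0+1)}\twoheadrightarrow\widetilde M^{(i_0,r_0)}$ then exhibits $\widetilde M_p$ as a restriction of the $\Delta_n$-path
\[
M_{\alpha_{i_0+a-1}}\circ\cdots\circ M_{\alpha_{i_0}}\circ M_{\alpha_{i_0-1}}\circ\cdots\circ M_{\alpha_{i_0-r_0+1}}\colon M^{(i_0-r_0+1)}\to M^{(i_0+a)},
\]
which has length $\ell=a+r_0-1$.

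Next I would bound $\ell$ from below. The level (second coordinate) of the vertices along $p$ changes by $+1$ at each $\alpha$-arrow and by $-1$ at each $\beta$-arrow, while always lying in $\{1,\dots,n\}$; hence $r_L-r_0=a-b=2a-(2n-1)$, so $a=\tfrac12(r_L-r_0+2n-1)$ and
\[
\ell=a+r_0-1=\tfrac12\big(r_L+r_0+2n-3\big)\ \geq\ \tfrac12(2n-1)=n-\tfrac12,
\]
using $r_0,r_L\geq 1$. As $\ell$ is an integer this forces $\ell\geq n$. Finally, every path of $\Delta_n$ of length $\geq n$ can be written as $q\circ q'$ with $q'$ a cyclic path of length exactly $n$ (its first $n$ arrows); such $q'$ acts trivially on $M$ by hypothesis, hence so does $q\circ q'$, and therefore $\widetilde M_p=0$.

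The only real content is the bookkeeping in the second step: checking that the $\beta$-arrows contribute only inclusions on $\widetilde M$, and that the starting vertex $(i_0,r_0)$ already accounts for $r_0-1$ of the length before $p$ even begins, so that the induced $\Delta_n$-path has length $a+r_0-1$ and not merely $a$. Once this is set up the numerical estimate is immediate; note that the relations of Lemma~\ref{lem:bounding-relations-part-1} are not actually needed here (they are what makes the reduction natural, but the direct computation bypasses them). The same argument shows more generally that every path of $\widetilde\Delta_n$ of length $\geq 2n-1$ acts trivially on $\widetilde M$.
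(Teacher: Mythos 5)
Your proof is correct, and it is essentially the paper's argument in different packaging: the paper attaches to a vector the pair (level minus one, number of $M_\alpha$'s applied so far), and argues that one cannot perform $2n-1$ steps while keeping the first coordinate nonnegative and the second below $n$ --- which is precisely your count $a+b=2n-1$, $r_L=r_0+a-b\in[1,n]$, together with the observation that $n$ applications of $M_\alpha$ annihilate everything. Your explicit identification of $\widetilde M_p$ (precomposed with the surjection $M^{(i_0-r_0+1)}\twoheadrightarrow\widetilde M^{(i_0,r_0)}$) with a $\Delta_n$-path of length $a+r_0-1\ge n$ is a clean and correct way to run the same computation, including the parity step.
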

\begin{proof}Let $i,r\in\bZ_n$ and let us take $v\in \widetilde M^{(i,r)}$. We attach to $v$ a pair $(r', r'')$, where $r'$ is the maximal length of a $\beta$ path on $\widetilde\Delta_n$ starting in $(i,r)$ and where $r''$ is the maximal length of an $\alpha$ path on $\widetilde\Delta_n$ ending in $(i,r)$. It follows immediately from the definition of $\widetilde M^{(i,r)}$ that $r'=r-1$.


Notice that
\begin{itemize}
    \item application of  a map $\widetilde M_{\beta_\bullet}$ to $v$ changes the coordinates $(r',r'')$ to $(r'-1,r'')$,
    \item application of a map $\widetilde M_{\alpha_\bullet}$ to $v$ changes the coordinates $(r',r'')$ to $(r'+1,r''+1)$,
    \item there are no vectors with coordinates $(r',r'')$ with $r'<0$ or $r''\ge n$. 
\end{itemize}
For example, $r''<n$, because any length $n$ path acts trivially on $M$. Our lemma is equivalent to the statement that
one can not do more than $2n-1$ steps of the form  $(r',r'')\to (r'-1,r'')$ or $(r',r'')$ to $(r'+1,r''+1)$ starting
at $(r-1,r-1)$ and staying inside the region $r'\ge 0$, $r''<n$. 
\end{proof}

\subsection{Construction of the desingularization}

\begin{lem}
Let $M\in \mathrm{rep}_\mathbb{C}(\Delta_n)$ 
let $U\subset M$ be an ${\bf e}$-dimensional subrepresentation. 
Then there exists a natural map
\begin{equation}\label{des}
\Psi: {\rm Gr}_{{\bf dim} \, \widetilde U}(\widetilde M) \to {\rm Gr}_{{\bf e}}(M),
\end{equation}
defined by the following rule: for $N\in {\rm Gr}_{{\bf dim} \, \widetilde U}(\widetilde M)$ its image 
$\Psi(N)\in {\overline S_U}$ is given by
\[
 \Psi(N)^{(i)}=N^{(i,1)}, \ \Psi(N)_{\alpha_i} = N_{\beta_{i+1,2}}N_{\alpha_{i,1}}.
\]
\end{lem}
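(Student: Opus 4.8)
The plan is to verify three things in turn: that the stated recipe produces a subrepresentation of $M$ of dimension vector ${\bf e}$, that $\Psi$ is a morphism of varieties, and that its image lands in $\overline{S_U}$. For the first point I would begin by noting that, since $U\subseteq M$, the extended construction is functorial for inclusions: $\widetilde U^{(i,r)}=M_{\alpha_{i-1}}\cdots M_{\alpha_{i-r+1}}U^{(i-r+1)}\subseteq\widetilde M^{(i,r)}$, the structure maps of $\widetilde U$ are restrictions of those of $\widetilde M$, and in particular $\widetilde U$ is a $\widetilde\Delta_n$-subrepresentation of $\widetilde M$ with $({\bf dim}\,\widetilde U)_{(i,1)}=\dim_{\mathbb C}U^{(i)}=e_i$. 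Then, for $N\in\mathrm{Gr}_{{\bf dim}\,\widetilde U}(\widetilde M)$, the tuple $\big(N^{(i,1)}\big)_{i\in\mathbb{Z}_n}$ already has the right dimension vector since $N^{(i,1)}\subseteq\widetilde M^{(i,1)}=M^{(i)}$; and Remark~\ref{recover} gives $M_{\alpha_i}=\widetilde M_{\beta_{i+1,2}}\widetilde M_{\alpha_{i,1}}$ on $M^{(i)}=\widetilde M^{(i,1)}$, so, using that $N$ is a subrepresentation, $M_{\alpha_i}(N^{(i,1)})=\widetilde M_{\beta_{i+1,2}}\big(\widetilde M_{\alpha_{i,1}}(N^{(i,1)})\big)\subseteq\widetilde M_{\beta_{i+1,2}}(N^{(i+1,2)})\subseteq N^{(i+1,1)}$. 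This shows $\Psi(N)$ is a subrepresentation of $M$ of dimension vector ${\bf e}$, with structure maps exactly $N_{\beta_{i+1,2}}N_{\alpha_{i,1}}$ as in the statement.

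For the morphism property I would argue formally: embedding $\mathrm{Gr}_{{\bf dim}\,\widetilde U}(\widetilde M)$ as a closed subvariety of $\prod_{(i,r)}\mathrm{Gr}\big(({\bf dim}\,\widetilde U)_{(i,r)},\widetilde M^{(i,r)}\big)$, the map $\Psi$ is the restriction of the projection onto the factors indexed by the pairs $(i,1)$, i.e.\ onto $\prod_{i\in\mathbb{Z}_n}\mathrm{Gr}(e_i,M^{(i)})$, corestricted to $\mathrm{Gr}_{\bf e}(M)$ — which is legitimate by the previous step. In particular $\Psi$ is a projective morphism.

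The hard part will be showing $\Psi(N)\in\overline{S_U}$ for every $N$. The idea I would follow is that $N$ equips the $\Delta_n$-module $\Psi(N)$ with a canonical filtration by $\Delta_n$-submodules $F^r:=\big(N^{(i,r)}\big)_{i\in\mathbb{Z}_n}$ — this is well defined because $M_{\alpha_i}(N^{(i,r)})\subseteq N^{(i+1,r+1)}\subseteq N^{(i+1,r)}$, the first inclusion since $N$ is a $\widetilde\Delta_n$-subrepresentation, the second since the $\beta$-maps of $\widetilde M$ are the inclusions of \eqref{incl} — with $\Psi(N)=F^1\supseteq F^2\supseteq\dots\supseteq F^n\supseteq F^{n+1}=0$, $\dim F^r_i=\dim\widetilde U^{(i,r)}$, and each subquotient $F^r/F^{r+1}$ semisimple (the induced $M_{\alpha_i}$ vanishes on it, since $M_{\alpha_i}(N^{(i,r)})\subseteq N^{(i+1,r+1)}=F^{r+1}_{i+1}$) with dimension vector depending only on ${\bf dim}\,\widetilde U$. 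The module $U=\Psi(\widetilde U)$ carries the corresponding filtration $\widetilde U^{(\bullet,r)}$, which additionally has all connecting maps $U_{\alpha_i}\colon\widetilde U^{(i,r)}\twoheadrightarrow\widetilde U^{(i+1,r+1)}$ surjective; I would then show that this makes $U$ the generic representation admitting a filtration with these subquotient dimensions, so that every $\Psi(N)$ lies in the (irreducible, closed, ${\rm G}_{\bf e}$-stable) locus of such representations, which is the closure of the ${\rm G}_{\bf e}$-orbit of $U$. Equivalently, $\Psi^{-1}(S_U)$ contains $\widetilde U$ and is dense in $\mathrm{Gr}_{{\bf dim}\,\widetilde U}(\widetilde M)$, and since $\Psi$ is continuous with $\Psi(\Psi^{-1}(S_U))\subseteq S_U$, one concludes that the image of $\Psi$ is contained in $\overline{S_U}$. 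I expect the genericity of $U$ — equivalently, the density of $\Psi^{-1}(S_U)$ — to be the real obstacle: this is where the explicit shape of $\widetilde M$ and $\widetilde U$ must be used, and it can be carried out along the lines of the CFR-type desingularization constructions cited above, in particular \cite{Pue2020}.
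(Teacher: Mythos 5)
Your verification that $\Psi$ is well defined is correct and is essentially identical to the paper's own (one-line) proof: the paper simply observes that by Remark~\ref{recover} the composition $N_{\beta_{i+1,2}}N_{\alpha_{i,1}}$ restricts $M_{\alpha_i}$ to $N^{(i,1)}$ and lands in $N^{(i+1,1)}$, so that $\Psi(N)$ is an $\mathbf{e}$-dimensional submodule of $M$. Your computation $({\bf dim}\,\widetilde U)_{(i,1)}=e_i$, the chain $M_{\alpha_i}(N^{(i,1)})\subseteq\widetilde M_{\beta_{i+1,2}}(N^{(i+1,2)})\subseteq N^{(i+1,1)}$, and the observation that $\Psi$ is the restriction of a projection of products of Grassmannians are all correct fillings-in of that argument. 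This is the entire content that the paper actually proves for this lemma.

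Where you go beyond the paper is the claim $\Psi(N)\in\overline{S_U}$, and there your proposal has a genuine gap that you yourself flag: the filtration $F^r=(N^{(i,r)})_i$ is correctly shown to be a filtration by $\Delta_n$-submodules with semisimple subquotients of fixed dimension vectors, but the step ``$U$ is the generic representation admitting such a filtration, hence every $\Psi(N)$ lies in $\overline{S_U}$'' is not established and is in fact doubtful at this level of generality: for arbitrary $M$ and $U$ the source ${\rm Gr}_{{\bf dim}\,\widetilde U}(\widetilde M)$ need not be irreducible, so its image need not lie in the closure of a single stratum, and the locus of subrepresentations admitting a filtration with prescribed subquotients need not have $S_U$ dense in it. The paper sidesteps this entirely: the lemma's proof makes no claim about $\overline{S_U}$ (the phrase ``$\Psi(N)\in\overline{S_U}$'' in the statement is best read as a forward reference), and the image is identified only in the subsequent proposition, only for $M=U_{[n]}$ and $U=U_J$, and by a different mechanism --- there ${\rm Gr}_{{\bf d}_J}(\widetilde U_{[n]})$ is irreducible (Theorem~\ref{issmooth}), $\Psi$ is bijective over the dense cell $C(p_J)$ with open preimage, and properness of $\Psi$ then forces the image to be $\overline{C(p_J)}=X_J(k,n)$. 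If you want to keep the $\overline{S_U}$ assertion as part of the lemma, you should either restrict to that special case and import the irreducibility and openness arguments, or drop the assertion and prove only what the paper proves.
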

\begin{proof}
By construction, $\Psi(N)$ is an ${\bf e}$-dimensional submodule of $M$ (see Remark \ref{recover}).
\end{proof}

Recall that the irreducible components of $X(k,n)$ are labeled by the $k$-element subsets 
$J\subset \{1,\dots,n\}$ and that we denote the corresponding irreducible component by $X_J(k,n)$ (see Remark~\ref{strata} and Proposition~\ref{prop:irred-comp-are-quiver-Grass}).
    
\begin{rem}
For any $J\subset \binom{[n]}{k}$ the irreducible component $X_J(k,n)$ contains a point $p_J$ which is a summand wise embedding of the representation 
 \[ U_J=\bigoplus_{j\in J} U(j;n) \] 
 into $U_{[n]}=\bigoplus_{i=1}^n U(i;n)$. The  component $X_J(k,n)$ is the closure of the ${\rm Aut}_{\Delta_n}(U_{[n]})$ orbit $C(p_J)$ of the point $p_J$. Each point of the cell $C(p_J)$ is isomorphic to $U_J$ as a $\Delta_n$ module and points in different cells are not isomorphic. 
 Moreover, each cell contains exactly one $\mathbb{C}^*$ fixed point (see Theorem~\ref{trm:cells-are-strata-part-I}).
\end{rem} 

Now let us consider the $\widetilde \Delta_n$ module $\widetilde U_J$ and let  ${\bf d}_J={\bf dim} \, \widetilde U_J$,
so that ${\bf d}_J$ is a vector with components $d_J^{(i,r)}$.

\begin{lem}
The numbers $d_J^{(i,r)}$ are explicitly given by
\[
d_J^{(i,r)} =  \# \left(J\cap \{i':\ i\le  i'\le i+n-r\}\right).
\]
In particular, $d_J^{(i,r)}\geq d_J^{(i,r+1)}$ for any $i\in\bZ_n$ and  $r\in[n-1]$.
\end{lem}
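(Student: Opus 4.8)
The formula to prove is the dimension count
\[
d_J^{(i,r)} = \#\bigl(J\cap \{i':\ i\le i'\le i+n-r\}\bigr),
\]
together with the monotonicity $d_J^{(i,r)}\ge d_J^{(i,r+1)}$. My plan is to compute $\widetilde M^{(i,r)}$ directly for $M = U_J$ by going back to the definition $\widetilde M^{(i,r)} = M_{\alpha_{i-1}}\cdots M_{\alpha_{i-r+1}}M^{(i-r+1)}$ and tracking what happens summand by summand. Since $U_J = \bigoplus_{j\in J} U(j;n)$ and each $U(j;n)$ is one-dimensional at every vertex, it suffices to determine, for each $j\in J$, whether the one-dimensional space $U(j;n)^{(i-r+1)}$ survives the composition of the $r-1$ maps $(U(j;n))_{\alpha_{i-1}}\cdots (U(j;n))_{\alpha_{i-r+1}}$; then $d_J^{(i,r)}$ is the number of $j\in J$ for which it does.

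First I would recall from Example~\ref{exple:UinCoeffQuiver} that the coefficient quiver of $U(j;n)$ is the equioriented type $A_n$ quiver $w^{(j+1)}\to w^{(j+2)}\to\cdots\to w^{(j-1)}\to w^{(j)}$, i.e. the map $(U(j;n))_{\alpha_\ell}$ is the identity for $\ell\ne j$ and zero for $\ell = j$. Thus the composition $(U(j;n))_{\alpha_{i-1}}\cdots(U(j;n))_{\alpha_{i-r+1}}$ applied to $U(j;n)^{(i-r+1)}$ is nonzero if and only if none of the arrows $\alpha_{i-r+1},\alpha_{i-r+2},\dots,\alpha_{i-1}$ equals $\alpha_j$, that is, if and only if $j\notin\{i-r+1,i-r+2,\dots,i-1\}$ modulo $n$. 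Equivalently (rewriting this length-$(r-1)$ cyclic interval as its complement, the length-$(n-r+1)$ cyclic interval), the summand $U(j;n)$ contributes $1$ to $\widetilde{(U_J)}^{(i,r)}$ exactly when $j\in\{i,i+1,\dots,i+n-r\}$ modulo $n$. Summing over $j\in J$ gives precisely $d_J^{(i,r)} = \#\bigl(J\cap\{i':\ i\le i'\le i+n-r\}\bigr)$, where the interval is understood cyclically; this matches the identification of $\widetilde M^{(i,r)}$ with $\mathrm{span}(v_h^{(i)}:\ h=r,\dots,n)$ from the Example preceding Lemma~\ref{lem:bounding-relations-part-1}, which serves as a sanity check.

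For the monotonicity statement, I would simply observe that passing from $r$ to $r+1$ shrinks the cyclic interval $\{i,i+1,\dots,i+n-r\}$ to $\{i,i+1,\dots,i+n-r-1\}$ by removing the single element $i+n-r$. Hence $J\cap\{i,\dots,i+n-r-1\}\subseteq J\cap\{i,\dots,i+n-r\}$, which immediately gives $d_J^{(i,r+1)}\le d_J^{(i,r)}$. (This is of course also visible from the inclusions \eqref{incl}, $\widetilde M^{(i,r+1)}\subset\widetilde M^{(i,r)}$, specialized to $M = U_J$.) The only mild subtlety — and the one point worth stating carefully rather than leaving implicit — is the bookkeeping of indices modulo $n$: one must check that the "length-$(r-1)$ forbidden interval" and its "length-$(n-r+1)$ complement" are genuinely complementary cyclic intervals in $\bZ_n$, which is where I would be most careful, but this is a routine verification once the indexing conventions of $\Delta_n$ and of $U(j;n)$ are pinned down.
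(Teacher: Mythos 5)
Your proof is correct and follows essentially the same route as the paper's: reduce to the definition $d_J^{(i,r)} = \dim_\bC U_{J,\alpha_{i-1}}\cdots U_{J,\alpha_{i-r+1}}U_J^{(i-r+1)}$, decompose $U_J$ into the summands $U(j;n)$, and observe that the composition is nonzero on $U(j;n)^{(i-r+1)}$ exactly when $j$ lies in the cyclic interval $\{i,\dots,i+n-r\}$. You simply make explicit the "one easily sees" step (the complementary-interval bookkeeping) and the monotonicity, both of which the paper leaves to the reader.
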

\begin{proof}
By definition, 
\[
d_J^{(i,r)} = \dim_\bC U_{J,\alpha_{i-1}}\dots U_{J,\alpha_{i-r+1}} U_J^{(i-r+1)}.
\]
One easily sees that
\[
U_{J,\alpha_{i-1}}\dots U_{J,\alpha_{i-r+1}} U(i';n)^{(i-r+1)} 
\]
is nontrivial (i.e. one-dimensional) if and only if  $i\le  i'\le i+n-r$.
\end{proof}

\begin{example}
One has $d_J^{(i,1)}=k$ for all $i$. The dimensions $d_J^{(i,n)}$ are either zero or one. 
\end{example}

\begin{thm}\label{issmooth}
The quiver Grassmannian ${\rm Gr}_{{\bf d}_J} (\widetilde U_{[n]})$ is smooth.
\end{thm}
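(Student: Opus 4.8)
The plan is to realize ${\rm Gr}_{{\bf d}_J}(\widetilde U_{[n]})$ as an iterated tower of Grassmann bundles over a point; this yields smoothness immediately, together (as a bonus) with irreducibility and a closed dimension formula. Recall from the explicit description of $\widetilde U_{[n]}$ given above that $\widetilde M^{(i,r)}={\rm span}(v_r^{(i)},\dots,v_n^{(i)})$, that the maps $\widetilde M_{\beta_{i,r}}$ are the tautological inclusions $\widetilde M^{(i,r)}\hookrightarrow\widetilde M^{(i,r-1)}$, and that $\widetilde M_{\alpha_{i,r}}(v_h^{(i)})=v_{h+1}^{(i+1)}$ for $h<n$ while $\widetilde M_{\alpha_{i,r}}(v_n^{(i)})=0$. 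Hence a point $N\in{\rm Gr}_{{\bf d}_J}(\widetilde U_{[n]})$ is a tuple of subspaces $N^{(i,r)}\subseteq\widetilde M^{(i,r)}$ with $\dim N^{(i,r)}=d_J^{(i,r)}$ subject to $N^{(i,r+1)}\subseteq N^{(i,r)}$ and $\widetilde M_{\alpha_{i,r}}(N^{(i,r)})\subseteq N^{(i+1,r+1)}$, the commutativity relations of Lemma~\ref{lem:bounding-relations-part-1} being automatic for subrepresentations of $\widetilde U_{[n]}$.

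I would then build $N$ vertex by vertex, processing the vertices $(i,r)$ in order of decreasing $r$. For fixed $r$ the vertices $(1,r),\dots,(n,r)$ are pairwise non-adjacent in $\widetilde\Delta_n$, so they may be treated independently at ``stage $r$''. At stage $r=n$ each $\widetilde M^{(i,n)}$ is one-dimensional and $d_J^{(i,n)}\in\{0,1\}$, so every $N^{(i,n)}$ is forced and this stage is a single point. At a stage $r<n$, with all $N^{(\bullet,r')}$ for $r'>r$ already fixed, the only constraints on $N^{(i,r)}$ are
\[
N^{(i,r+1)}\ \subseteq\ N^{(i,r)}\ \subseteq\ B_i^{(r)}:=\widetilde M^{(i,r)}\cap\bigl(\widetilde M_{\alpha_{i,r}}\bigr)^{-1}\bigl(N^{(i+1,r+1)}\bigr),
\]
so the valid choices form ${\rm Gr}\bigl(d_J^{(i,r)}-d_J^{(i,r+1)},\,B_i^{(r)}/N^{(i,r+1)}\bigr)$, and the claim is that this is a Grassmann bundle of constant rank over the base built from the earlier stages.

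The claim rests on two bookkeeping points. First, $N^{(i,r+1)}\subseteq B_i^{(r)}$: indeed $\widetilde M_{\alpha_{i,r}}$ restricted to $\widetilde M^{(i,r+1)}$ factors through $\widetilde M_{\alpha_{i,r+1}}$ followed by the inclusion $\widetilde M^{(i+1,r+2)}\hookrightarrow\widetilde M^{(i+1,r+1)}$, so $\widetilde M_{\alpha_{i,r}}(N^{(i,r+1)})\subseteq N^{(i+1,r+2)}\subseteq N^{(i+1,r+1)}$ (with the obvious degeneration $N^{(i,n)}\subseteq\ker\widetilde M_{\alpha_{i,n-1}}$ when $r=n-1$). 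Second, $\dim B_i^{(r)}=d_J^{(i+1,r+1)}+1$ is constant over the base, because $\widetilde M_{\alpha_{i,r}}$ maps $\widetilde M^{(i,r)}$ onto $\widetilde M^{(i+1,r+1)}$ with one-dimensional kernel $\bC v_n^{(i)}$; constancy of this dimension is exactly what makes $B_i^{(r)}$ a subbundle of the trivial bundle with fibre $\widetilde M^{(i,r)}$, and since $N^{(i,r+1)}$ is pulled back from a tautological subbundle of an earlier stage, $B_i^{(r)}/N^{(i,r+1)}$ is a genuine vector bundle. It remains to see that $0\le d_J^{(i,r)}-d_J^{(i,r+1)}\le d_J^{(i+1,r+1)}+1-d_J^{(i,r+1)}$: the left inequality is the monotonicity $d_J^{(i,r+1)}\le d_J^{(i,r)}$ of the Lemma, and the right one is equivalent to $d_J^{(i,r)}-d_J^{(i+1,r+1)}=\#\bigl(J\cap\{i\}\bigr)\in\{0,1\}$, both immediate from $d_J^{(i,r)}=\#\bigl(J\cap\{i,\dots,i+n-r\}\bigr)$. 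Since a Grassmann bundle over a smooth irreducible base is smooth and irreducible, induction on $r$ shows ${\rm Gr}_{{\bf d}_J}(\widetilde U_{[n]})$ is smooth and irreducible, of dimension $\sum_{r<n}\sum_{i}\bigl(d_J^{(i,r)}-d_J^{(i,r+1)}\bigr)\bigl(d_J^{(i+1,r+1)}+1-d_J^{(i,r)}\bigr)$.

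The only real content is the bookkeeping in the third paragraph — ensuring the incidence conditions at each vertex cut out an honest subbundle rather than a locus with jumping fibre dimension — and I expect it to reduce, as above, to the elementary inequalities on the $d_J^{(i,r)}$. An alternative, more homological route would be to adapt the Caldero--Reineke tangent space computation (as in Remark~\ref{rem:non-smoothness-irreducible components}) and verify that $\dim_{\bC}{\rm Hom}_{\widetilde\Delta_n}\bigl(N,\widetilde U_{[n]}/N\bigr)$ is independent of $N$; but the fibration argument is cleaner and delivers irreducibility for free.
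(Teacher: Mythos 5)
Your proof is correct and follows essentially the same route as the paper: a tower of fibrations indexed by decreasing $r$, with the stage-$n$ spaces forced and each subsequent fiber a product of classical Grassmannians, the surjectivity of $\widetilde M_{\alpha_{i,r}}$ with one-dimensional kernel guaranteeing constant fiber dimension. Your bookkeeping is in fact slightly more precise than the paper's (you correctly identify the fiber as a Grassmannian of $B_i^{(r)}/N^{(i,r+1)}$ rather than of all of $\widetilde U_{[n]}^{(i,r)}/N^{(i,r+1)}$, and you verify nonemptiness via $d_J^{(i,r)}-d_J^{(i+1,r+1)}=\#(J\cap\{i\})\le 1$), so no changes are needed.
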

\begin{proof}
We show that ${\rm Gr}_{{\bf d}_J} (\widetilde U_{[n]})$ is isomorphic to a tower of fibrations 
\[
{\rm Gr}_{{\bf d}_J} (\widetilde U_{[n]})=Y_1\to Y_2\to\dots\to Y_n={\rm pt}
\]
such that each map $Y_r\to Y_{r+1}$, $r=1,\dots,n-1$ is a fibration with the  fiber being a product of classical Grassmann varieties. 

A point $N$ of ${\rm Gr}_{{\bf d}_J} (\widetilde U_{[n]})$ is a collection of subspaces 
$N^{(i,r)}\subset \widetilde U_{[n]}^{(i,r)}$. 
We define $Y_r$ as the image of the natural projection map
\[
{\rm Gr}_{{\bf d}_J} (\widetilde U_{[n]})\to \prod_{r'=r}^n \prod_{i\in\bZ_n} {\rm Gr}_{d_J^{(i,r')}}(\widetilde U_{[n]}^{(i,r')}), \quad N\mapsto (N^{(i,r)})_{\substack{i\in\bZ_n\\r\leq r'\leq n}}.
\]

We start with $r=n$ and then proceed by decreasing induction on $r$.
One has $\dim_\bC \widetilde{U}_{[n]}^{(i,n)}=1$ and $d_J^{(i,n)}$ are either 
$0$ or $1$. Thus $Y_n$ is a product of $n$ points (corresponding to $i\in\bZ_n$).  We also note 
 that
\[
U_{\alpha_{i,n-1}}U_{\beta_{i,n}}  (N^{(i,n)}) \subset N^{(i+1,n)},
\]
because the composition $U_{\alpha_{i,n-1}}U_{\beta_{i,n}}$ vanishes on $\widetilde U_{[n]}$. Here and below we use the notation 
$U_\gamma=\widetilde{U}_{[n],\gamma}$ for the map corresponding to the edge $\gamma$ of $\widetilde \Delta_n$. 

Now assume that all the subspaces $N^{(i,r')}$ are fixed for $r'>r$. 
Since $N$ is a subrepresentation, 
 the subspace $N^{(i,r)}\subset \widetilde{U}_{[n]}^{(i,r)}$ has to satisfy the conditions
\[
U_{\beta_{i,r+1}} N^{(i,r+1)}\subset N^{(i,r)},\ U_{\alpha_{i,r}} N^{(i,r)} \subset N^{(i+1,r+1)}  
\]
and hence 
\begin{equation}\label{condition}
U_{\alpha_{i,r}} U_{\beta_{i,r+1}}  (N^{(i,r+1)}) \subset N^{(i+1,r+1)}.
\end{equation}
Since $U_{\beta_{i,r+1}}$ is an embedding, $U_{\alpha_{i,r}}$ is a surjection and condition \eqref{condition} holds, the choice of $N^{(i,r)}$ as above is equivalent to the choice of a point
in the Grassmannian 
\[
{\rm Gr}_{d_J^{(i,r)}-d_J^{(i,r+1)}}\bigl(\widetilde U_{[n]}^{(i,r)}/U_{\beta_{i,r+1}}(N^{(i,r+1)})\bigr).
\]
\end{proof}

\begin{prop}
The image of the natural map $f_J: {\rm Gr}_{{\bf d}_J} (\widetilde U_{[n]})\to X(k,n)$ is equal to
$X_J(k,n)$ and is a bijection over the open cell $C(p_J)$. 
\end{prop}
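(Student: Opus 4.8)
The plan is to analyze the map $f_J = \Psi$ from \eqref{des} (specialized to $M = U_{[n]}$, $U = U_J$, ${\bf e} = {\bf k}$, ${\bf d}_J = {\bf dim}\,\widetilde U_J$) in three stages: first show its image lands in $X_J(k,n)$; second show it hits the open cell $C(p_J)$ bijectively; third conclude the image is exactly $X_J(k,n)$ by a dimension/closedness argument. For the first step, I would note that $f_J$ is a morphism of projective varieties, so its image is closed and irreducible (since ${\rm Gr}_{{\bf d}_J}(\widetilde U_{[n]})$ is irreducible — in fact smooth and connected by Theorem~\ref{issmooth}, being an iterated Grassmann-bundle tower over a point). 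Hence it suffices to show the image is contained in $X_J(k,n) = \overline{C(p_J)}$, and for that it is enough to exhibit a dense subset of the source mapping into $\overline{C(p_J)}$; I will get this for free from the bijectivity over $C(p_J)$ once I know the preimage of $C(p_J)$ is dense (open and nonempty) in the source.

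For the bijectivity over $C(p_J)$, recall that $C(p_J)$ is the ${\rm Aut}_{\Delta_n}(U_{[n]})$-orbit of $p_J$, and every point $V = (V^{(i)})$ of $C(p_J)$ is isomorphic to $U_J$ as a $\Delta_n$-module. For such a $V$, consider its image subspaces $V^{(i,r)} := U_{\alpha_{i-1}}\cdots U_{\alpha_{i-r+1}} V^{(i-r+1)} \subset \widetilde U_{[n]}^{(i,r)}$; I would check that, because $V \cong U_J$, these have dimensions exactly $d_J^{(i,r)}$ (using the explicit formula from the lemma computing $d_J^{(i,r)}$ and the fact that the propagation pattern of image subspaces of $V$ matches that of $U_J = \bigoplus_{j\in J} U(j;n)$, since the nilpotency/support data of an indecomposable summand is determined by its isomorphism type). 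Thus $\widetilde V := (V^{(i,r)})_{i,r}$ is a well-defined point of ${\rm Gr}_{{\bf d}_J}(\widetilde U_{[n]})$ and is a section of $\Psi$ over $V$ (by Remark~\ref{recover}, $\Psi(\widetilde V)^{(i)} = V^{(i,1)} = V^{(i)}$ and $\Psi(\widetilde V)_{\alpha_i} = V_{\beta_{i+1,2}} V_{\alpha_{i,1}}$ recovers $M_{\alpha_i}|_V$). Conversely, if $N \in f_J^{-1}(V)$ with $V \in C(p_J)$, I must show $N = \widetilde V$: since $N^{(i,1)} = V^{(i)}$ and $N$ is a subrepresentation of $\widetilde U_{[n]}$, the arrows $\widetilde M_{\alpha_{i,r}}$ force $N^{(i+1,r+1)} \supseteq \widetilde M_{\alpha_{i,r}} N^{(i,r)}$; iterating from $r=1$ gives $N^{(i,r)} \supseteq V^{(i,r)}$, and equality of dimensions $\dim N^{(i,r)} = d_J^{(i,r)} = \dim V^{(i,r)}$ forces $N^{(i,r)} = V^{(i,r)}$ for all $r$. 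So $\Psi$ restricted to $f_J^{-1}(C(p_J))$ is a bijection onto $C(p_J)$. In particular $f_J^{-1}(C(p_J))$ is nonempty; it is open because $C(p_J)$ is open in $X_J(k,n)$ (it is the top cell) and, working inside $X(k,n)$, the preimage of the open locus where the source actually maps is open — more carefully, the preimage under the continuous map $f_J$ of the open subset $C(p_J) \subseteq X_J(k,n) \subseteq X(k,n)$ is open in the source.

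For the final step, since $f_J^{-1}(C(p_J))$ is a nonempty open (hence dense, by irreducibility of the smooth connected source) subset mapping into $\overline{C(p_J)} = X_J(k,n)$, and $f_J$ has closed image, I conclude $\mathrm{im}(f_J) = \overline{f_J(f_J^{-1}(C(p_J)))} = \overline{C(p_J)} = X_J(k,n)$. Combined with the bijectivity over $C(p_J)$ established above, this is exactly the claim. The main obstacle I anticipate is the dimension-matching step: verifying that for an arbitrary $V$ in the orbit $C(p_J)$ (not just for $p_J$ itself) the image subspaces $V^{(i,r)}$ have the prescribed dimensions $d_J^{(i,r)}$. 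This should follow because ${\rm Aut}_{\Delta_n}(U_{[n]})$ acts by isomorphisms, so it preserves the isomorphism type of the submodule and hence the ranks of all compositions of the structure maps restricted to it; but one must be slightly careful that the relevant compositions $\widetilde M_{\alpha_{i-1}}\cdots\widetilde M_{\alpha_{i-r+1}}$ are the ambient maps of $\widetilde U_{[n]}$, and that $\dim \widetilde M_{\alpha_{i-1}}\cdots\widetilde M_{\alpha_{i-r+1}}(V^{(i-r+1)})$ depends only on the $\Delta_n$-module structure of the pair $(V \hookrightarrow U_{[n]})$ up to the $\mathrm{Aut}$-action — which it does. One should also double-check that $\widetilde V$ is genuinely a subrepresentation of $\widetilde U_{[n]}$, i.e. that the $\beta$-arrows are respected: $\widetilde M_{\beta_{i,r}} V^{(i,r)} \subseteq V^{(i,r-1)}$, which holds since $\widetilde M_{\beta_{i,r}}$ is the inclusion $\widetilde U_{[n]}^{(i,r)} \hookrightarrow \widetilde U_{[n]}^{(i,r-1)}$ and $V^{(i,r)} \subseteq V^{(i,r-1)}$ as nested image subspaces.
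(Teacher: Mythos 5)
Your argument is correct and follows the same overall strategy as the paper: show that over any point $V$ of $C(p_J)$ the ranks $\dim V^{(i,r)}$ are forced to equal $d_J^{(i,r)}$ because all points of the cell are isomorphic to $U_J$ as $\Delta_n$-modules (Theorem~\ref{trm:cells-are-strata-part-I}), deduce that the fibre over $V$ is a single point, and then conclude ${\rm im}(f_J)=\overline{f_J(f_J^{-1}(C(p_J)))}=\overline{C(p_J)}=X_J(k,n)$ using that $f_J^{-1}(C(p_J))$ is a nonempty open subset of the irreducible source and that $f_J$ is a projective morphism. The one place where you genuinely diverge is the openness of $f_J^{-1}(C(p_J))$: the paper establishes it by an inductive tower argument parallel to the proof of Theorem~\ref{issmooth} (intersecting each stage $Y_r$ with the preimage and checking one obtains an open part), whereas you argue purely topologically via continuity of $f_J$. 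Your route is cleaner, but it requires $C(p_J)$ to be open in all of $X(k,n)$, not merely in $X_J(k,n)$ --- openness in $X_J(k,n)$ alone gives nothing before you know the image lies in $X_J(k,n)$, which is what is being proved. This stronger openness does hold: $C(p_J)$ is the complement in $X(k,n)$ of the finite union of the closures of all other cells, each such closure is a union of cells $C({\mathcal J})$ with ${\mathcal J}\preceq{\mathcal I}$ (Remark after Theorem~\ref{PP}), and the necklace of $p_J$ is maximal in the poset since its cell has top dimension $k(n-k)$ and dimension equals length in the lower order ideal ${\mathcal B}_{k,n}$; so no other closure meets $C(p_J)$. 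With that one-line justification added, your proof is complete, and it trades the paper's explicit fibration induction for a combinatorial input already established in Sections 5 and 6.
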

\begin{proof}
Let us take a point $M\in C(p_J)$. By Theorem \ref{trm:cells-are-strata-part-I}(i), all the points in the cell are isomorphic as $\Delta_n$ modules and it follows that
\[
\dim_\bC M_{\alpha_{i-r+1}}\dots M_{\alpha_i}M^{(i)} = d_J^{(i,r)} 
\]
for all vertices $(i,r)$. Hence the preimage of $M$ is a single point. Moreover, we claim that $f_J^{-1}(C(p_J))$ is open in ${\rm Gr}_{{\bf d}_J}(\widetilde M)$. The proof is analogous to the proof of Theorem \ref{issmooth}: we consider the tower
\[
f_J^{-1}(C(p_J))=Y_1\cap f_J^{-1}(C(p_J))\to Y_2\cap f_J^{-1}(C(p_J))\to\dots\to Y_n\cap f_J^{-1}(C(p_J))={\rm pt}
\]
and show by decreasing induction on $r$ that at each step one obtains an open part of $Y_r$.

Recall that by definition $X_J(k,n)$ coincides with the closure
of the cell $C(p_J)$.  Therefore
\[
f_J\bigl({\rm Gr}_{{\bf d}_J} (\widetilde U_{[n]})\bigr) = 
\overline{f_J(f_J^{-1}C(p_J))}=\overline{C(p_J)}=X_J(k,n), 
\]
where the first equality is true since $f_J^{-1}(C(p_J))$ is open in ${\rm Gr}_{{\bf d}_J}(\widetilde M)$. 
\end{proof}

We summarize the whole picture in the following Corollary.
\begin{cor}\label{cor:desing}
The quiver Grassmannian ${\rm Gr}_{{\bf d}_J}(\widetilde U_{[n]})$ desingularizes $X_J(k,n)$.
\end{cor}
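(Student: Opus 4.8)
The statement is the formal combination of Theorem~\ref{issmooth} and the preceding Proposition, together with the fact that a quiver Grassmannian is projective. The plan is as follows. First I would record that $Y:={\rm Gr}_{{\bf d}_J}(\widetilde U_{[n]})$ is a smooth, irreducible, projective variety: smoothness is exactly Theorem~\ref{issmooth}, while irreducibility and connectedness follow from the same proof, since $Y$ is exhibited there as an iterated fibration $Y=Y_1\to Y_2\to\dots\to Y_n={\rm pt}$ whose fibres are products of ordinary Grassmannians, hence smooth, irreducible and connected; a tower of Zariski-locally trivial fibrations with smooth connected base and fibres is again smooth and connected. Projectivity holds because $Y$ is a closed subvariety of a product of ordinary Grassmannians. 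Dually, $X_J(k,n)$ is projective (a closed subvariety of $X(k,n)$) and irreducible, being the closure $\overline{\mathcal{S}_{U_J}}=\overline{C(p_J)}$ of a single stratum.

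Next I would invoke the preceding Proposition, which identifies the image of $f_J\colon Y\to X(k,n)$ with $X_J(k,n)$ and shows that $f_J$ restricts to a bijection over the dense open cell $C(p_J)\subset X_J(k,n)$, with $f_J^{-1}(C(p_J))$ open in $Y$. Since $Y$ is projective, $f_J$ is a projective (hence proper) surjective morphism onto $X_J(k,n)$. It remains only to upgrade the set-theoretic bijection over $C(p_J)$ to an isomorphism. The clean way is to rerun the tower argument of Theorem~\ref{issmooth} over $C(p_J)$: for a point of $C(p_J)$ all the subspaces $N^{(i,r)}$ are forced, since their dimensions are the prescribed numbers $d_J^{(i,r)}$ and the inclusion/surjection constraints pin them down uniquely (exactly as in the proof of the Proposition), so each relative Grassmannian ${\rm Gr}_{d_J^{(i,r)}-d_J^{(i,r+1)}}\bigl(\widetilde U_{[n]}^{(i,r)}/U_{\beta_{i,r+1}}(N^{(i,r+1)})\bigr)$ degenerates to a reduced point over $f_J^{-1}(C(p_J))$; hence $f_J^{-1}(C(p_J))\to C(p_J)$ is an isomorphism. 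Alternatively, as $f_J^{-1}(C(p_J))$ is an open subscheme of the smooth irreducible variety $Y$ it is reduced and irreducible, and a bijective morphism of complex varieties onto the smooth (hence normal) variety $C(p_J)\cong\mathbb{A}^{k(n-k)}$ is an isomorphism by Zariski's main theorem. Either way $f_J$ is birational.

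Putting this together, $f_J\colon Y\to X_J(k,n)$ is a projective birational morphism from a smooth irreducible variety which is an isomorphism over the dense open subset $C(p_J)$, which is precisely the assertion that ${\rm Gr}_{{\bf d}_J}(\widetilde U_{[n]})$ desingularizes $X_J(k,n)$; this proves the Corollary. The only point that genuinely needs care — and the reason I would favour the explicit tower computation over the bare appeal to Zariski's main theorem — is that the fibres of $f_J$ over $C(p_J)$ are \emph{reduced} points, i.e.\ that $f_J$ is generically injective as a morphism of schemes rather than merely bijective on closed points; the tower argument gives this directly and, as a byproduct, computes $\dim Y$ as the sum of the dimensions of the fibre Grassmannians, confirming $\dim Y=\dim X_J(k,n)=k(n-k)$ and ruling out the contraction of any positive-dimensional family onto $C(p_J)$. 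All the substantive work is already contained in Theorem~\ref{issmooth} and the preceding Proposition; the Corollary is simply their assembly.
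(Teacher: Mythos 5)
Your proposal is correct and follows exactly the route the paper intends: the Corollary is stated as a summary of Theorem~\ref{issmooth} (smoothness of ${\rm Gr}_{{\bf d}_J}(\widetilde U_{[n]})$, via the tower of Grassmannian fibrations, which also gives irreducibility) and the preceding Proposition (proper surjection onto $X_J(k,n)$ that is bijective over the dense open cell $C(p_J)$, with open preimage), and the paper offers no further argument. Your additional remarks on reducedness of the generic fibre and birationality are a reasonable tightening of the same assembly, not a different approach.
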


\subsection{Example}
We work out two examples of the desingularization above for $n=4$ and $k=2$. The variety $X(2,4)$ has six irreducible 
components,  labeled by the cardinality two sets $J\subset [4]$. Essentially there are two different cases (up to rotation): $J=\{1,2\}$ and $J=\{1,3\}$. The quiver
$\widetilde \Delta_4$  looks as follows:
\[
\begin{tikzcd}
(3,4) \ar[rr, "\beta_{3,4}"] & & (3,3) \ar[dr, "\beta_{3,3}"]\ar[rr, "\alpha_{3,3}"] &  &  (4,4) \ar[dd, "\beta_{4,4}"]  \\
 & (2,2) \ar[r, "\beta_{2,2}"] \ar[ur, "\alpha_{2,2}"]  & (2,1) \ar[r, "\alpha_{2,1}"] & (3,2) \ar[d, "\beta_{3,2}"] \ar[dr, "\alpha_{3,2}"] &  \\
 (2,3) \ar[uu, "\alpha_{2,3}"] \ar[ur, "\beta_{2,3}"]  & (1,1) \ar[u, "\alpha_{1,1}"]  & & (3,1) \ar[d, "\alpha_{3,1}"]  & (4,3) \ar[dl, "\beta_{4,3}"]\ar[dd, "\alpha_{4,3}"]\\
 & (1,2) \ar[ul, "\alpha_{1,2}"] \ar[u, "\beta_{1,2}"] & (4,1) \ar[l, "\alpha_{4,1}"] & (4,2) \ar[l, "\beta_{4,2}"] 
 \ar[dl, "\alpha_{4,2}"] &\\
 (2,4) \ar[uu, "\beta_{2,4}"] & & (1,3) \ar[ll, "\alpha_{1,3}"] \ar[ul, "\beta_{1,3}"] & & (1,4) \ar[ll, "\beta_{1,4}"]
\end{tikzcd}
\]

Now let $J=\{1,2\}$. Then the dimension vector $d_J^{(i,r)}$ is given by
\medskip
\[
\begin{tikzcd}
0 & & 0 &  &  0 \\
 & 1 & 2  & 1 &  \\
1  & 2 & & 2 & 1 \\
 & 2  & 2 & 2 &\\
 1 & & 2 & & 1
\end{tikzcd}
\]
\medskip
Hence the desingularization ${\rm Gr}_{{\bf d}_J} (\widetilde U_{[n]})$ is the tower bundle 
\begin{equation}\label{tower}
{\rm Gr}_{{\bf d}_J} (\widetilde U_{[n]}) = Y_1 \to Y_2 \to Y_3 \to Y_4=pt,  
\end{equation}
where the fiber of the fibration $Y_3\to Y_4$ is $\bP^1$
(the other three factors are points), the fiber of the fibration  $Y_2\to Y_3$ is $\bP^1\times \bP^1$
(with two trivial factors) and the fiber of the fibration  $Y_1\to Y_2$ is $\bP^1$ (with three trivial factors).  

Now let $J=\{1,3\}$. Then the dimension vector $d_J^{(i,r)}$ is given by

\medskip

\[
\begin{tikzcd}
1 & & 1 &  &  0 \\
 & 1 & 2  & 2 &  \\
1  & 2 & & 2 & 1 \\
 & 2  & 2 & 1 &\\
 0 & & 1 & & 1
\end{tikzcd}
\]

\medskip

The tower \eqref{tower} is as follows: $Y_4$ is a point, the fiber of the fibration $Y_3\to Y_4$ is $\bP^1\times \bP^1$ (with two trivial factors), the fiber of the fibration  $Y_2\to Y_3$ is a point
(the product of four points) and the fiber of the fibration  $Y_1\to Y_2$ is $\bP^1\times \bP^1$ (with two trivial factors).

\begin{rem}
Similar to Remark \ref{rem:non-smoothness-irreducible components} one can show that both $X_{\{1,2\}}(2,4)$ and $X_{\{1,3\}}(2,4)$ are singular.   

\end{rem}

\subsection{Geometric and Combinatorial properties of the Desingularization}\label{subsec:geo-prop-desing}

\begin{lem}\label{lem:aut-group-desing}
The automorphism group of $ \widetilde U_{[n]}$ satisfies
\[  {\rm Aut}_{\widetilde \Delta_n}\big( \widetilde U_{[n]}\big) \cong {\rm Aut}_{\Delta_n}\big(U_{[n]}\big). \]
\end{lem}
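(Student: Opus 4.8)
The plan is to produce an explicit pair of mutually inverse group homomorphisms between $\mathrm{Aut}_{\Delta_n}(U_{[n]})$ and $\mathrm{Aut}_{\widetilde\Delta_n}(\widetilde U_{[n]})$, read off directly from the definition of the extended representation, rather than to compare dimensions of endomorphism algebras. The key structural facts I will use are that each space $\widetilde U_{[n]}^{(i,r)}$ is, by definition, the image inside $U_{[n]}^{(i)}$ of a composition of structure maps of $U_{[n]}$; that the $\beta$-arrows of $\widetilde\Delta_n$ act on $\widetilde U_{[n]}$ by the inclusions \eqref{incl}; that the $\alpha$-arrows act by (corestrictions of) the maps $(U_{[n]})_{\alpha_i}$ via \eqref{surj}; and Remark~\ref{recover}, which recovers $(U_{[n]})_{\alpha_i}$ as the composition $\widetilde U_{[n],\beta_{i+1,2}}\circ\widetilde U_{[n],\alpha_{i,1}}$.

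First I would construct the forward map. Given $\varphi=(\varphi^{(i)})_{i\in\bZ_n}\in\mathrm{Aut}_{\Delta_n}(U_{[n]})$, I will check that each $\varphi^{(i)}$ preserves $\widetilde U_{[n]}^{(i,r)}$ for every $r$: since $\varphi$ intertwines the maps $(U_{[n]})_{\alpha_\bullet}$ and each $\varphi^{(j)}$ is bijective, $\varphi^{(i)}$ maps the image $(U_{[n]})_{\alpha_{i-1}}\cdots(U_{[n]})_{\alpha_{i-r+1}}U_{[n]}^{(i-r+1)}$ onto itself. Restricting $\varphi^{(i)}$ to these subspaces gives a tuple $\widetilde\varphi=(\widetilde\varphi^{(i,r)})$; I then verify it intertwines the arrows of $\widetilde\Delta_n$, which is immediate for the $\beta$-arrows (inclusions) and for the $\alpha$-arrows follows from $\varphi$ already commuting with $(U_{[n]})_{\alpha_i}$. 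Hence $\widetilde\varphi\in\mathrm{Aut}_{\widetilde\Delta_n}(\widetilde U_{[n]})$, and $\varphi\mapsto\widetilde\varphi$ is a group homomorphism.

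Next I would construct the backward map and match the two. Given $\psi=(\psi^{(i,r)})\in\mathrm{Aut}_{\widetilde\Delta_n}(\widetilde U_{[n]})$, commuting with the inclusions $\widetilde U_{[n],\beta_{i,r}}$ forces, by a short downward induction on $r$, the identity $\psi^{(i,r)}=\psi^{(i,1)}|_{\widetilde U_{[n]}^{(i,r)}}$; in particular $\psi^{(i,1)}$ preserves each $\widetilde U_{[n]}^{(i,r)}$ and $\psi$ is determined by its top-layer components. Setting $\psi^{(i)}:=\psi^{(i,1)}$ and applying Remark~\ref{recover} shows $(\psi^{(i)})$ commutes with every $(U_{[n]})_{\alpha_i}$, and invertibility of $\psi$ gives invertibility of each $\psi^{(i)}$, so $(\psi^{(i)})\in\mathrm{Aut}_{\Delta_n}(U_{[n]})$. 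Finally I check both assignments are group homomorphisms (clear, as all maps are built by restriction and composition of linear maps) and mutually inverse: from $\varphi$ one recovers $\widetilde\varphi^{(i,1)}=\varphi^{(i)}$, and from $\psi$ the extension of $(\psi^{(i,1)})$ has $(i,r)$-component $\psi^{(i,1)}|_{\widetilde U_{[n]}^{(i,r)}}=\psi^{(i,r)}$ by the induction above. I do not anticipate a genuine obstacle; the only point requiring a little care is the rigidity claim that an automorphism of $\widetilde U_{[n]}$ is determined by its top-layer components, which is exactly where the inclusion structure of the $\beta$-maps is essential. (A less conceptual alternative would be to compute $\dim_\bC\mathrm{End}_{\widetilde\Delta_n}(\widetilde U_{[n]})$ in the style of Proposition~\ref{prop:endomorphism-algebra} and recognize it equals $n^2=\dim_\bC\mathrm{End}_{\Delta_n}(U_{[n]})$, but the explicit isomorphism above is cleaner and yields the group-level statement directly.)
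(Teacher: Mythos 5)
Your proposal is correct and follows essentially the same route as the paper: the paper also uses Remark~\ref{recover} (the composition $\beta_{i+1,2}\circ\alpha_{i,1}$) to identify the constraints on the top-layer components $A^{(i,1)}$ with those defining ${\rm Aut}_{\Delta_n}(U_{[n]})$, and then observes that all other components $A^{(i,r)}$ are forced to be the restrictions (in the paper's basis, the lower-diagonal blocks of size $n-r+1$) of $A^{(i,1)}$. Your write-up is merely a basis-free and more detailed version of the same argument, with the two inverse homomorphisms made explicit.
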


\begin{proof}
Composing $\beta_{i+1,2} \circ \alpha_{i,1}$ for all $i \in \mathbb{Z}_n$ we obtain the same relations on each component $A^{(i,1)}$ of $A \in {\rm Aut}_{\widetilde \Delta_n}( \widetilde U_{[n]})$ as for the component $B^{(i)}$ of $B \in {\rm Aut}_{\Delta_n}(U_{[n]})$ (see Proposition \ref{prop:endomorphism-algebra}). By construction of $\widetilde U_{[n]}$ all other components $A^{(i,r)}$ are the lower diagonal blocks of size $n-r+1$ in the matrices $A^{(i,1)}$. This gives us the desired isomorphism. 
\end{proof} 

Observe that the ${\rm G}_\mathbf{n}$ action on ${\rm R}_\mathbf{n}(\Delta_n)$ preserves the relations satisfied by the representations and the automorphism group of $M \in {\rm R}_\mathbf{n}(\Delta_n)$ is its ${\rm G}_\mathbf{n}$ stabilizer. Hence on both sides we obtain the same groups if we view $U_{[n]}$ and $\widetilde U_{[n]}$ as bounded quiver representations.

\begin{lem}
The strata in the quiver Grassmannian ${\rm Gr}_{\mathbf{d}_J} (\widetilde U_{[n]})$ are exactly the ${\rm Aut}_{\widetilde \Delta_n}( \widetilde U_{[n]})$ orbits. 
\end{lem}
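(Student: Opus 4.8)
The statement to prove is that the strata in $\mathrm{Gr}_{\mathbf{d}_J}(\widetilde U_{[n]})$ coincide with the $\mathrm{Aut}_{\widetilde\Delta_n}(\widetilde U_{[n]})$ orbits. The plan is to follow the same strategy that worked for $X(k,n)$ in Theorem~\ref{trm:cells-are-strata-part-I}, transporting it through the desingularization map. First I would recall that, by \cite[Lemma~2.4]{CFR12}, each stratum $\mathcal S_N$ (the isomorphism class of a point $N \in \mathrm{Gr}_{\mathbf d_J}(\widetilde U_{[n]})$) is irreducible, and that any $\mathrm{Aut}_{\widetilde\Delta_n}(\widetilde U_{[n]})$ orbit is automatically contained in a single stratum, since applying an automorphism of $\widetilde U_{[n]}$ to a subrepresentation $N$ yields an isomorphic subrepresentation. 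So the only thing to prove is the reverse inclusion: any two isomorphic subrepresentations of $\widetilde U_{[n]}$ (of dimension vector $\mathbf d_J$) lie in the same orbit, i.e. an abstract isomorphism $N \xrightarrow{\ \sim\ } N'$ of $\widetilde\Delta_n$-modules can be extended to an automorphism of the ambient module $\widetilde U_{[n]}$.

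The key structural input is that $\widetilde U_{[n]}$ decomposes into indecomposable summands each occurring with multiplicity one (this follows from the explicit description of $\widetilde U_{[n]}$ in the example after Lemma~\ref{lem:bounding-relations-part-2}, where $\widetilde M^{(i,r)}$ is the span of the $v^{(i)}_h$ with $h \geq r$, so $\widetilde U_{[n]} = \widetilde{U_{[n]}} = \bigoplus_{j} \widetilde{U(j;n)}$ with pairwise non-isomorphic summands $\widetilde{U(j;n)}$). This multiplicity-freeness is exactly the hypothesis under which the extension argument of \cite[Lemma~2.27]{Pue2019} (itself modeled on \cite[Lemma~6.3]{Reineke2008}) applies: given a monomorphism $N \hookrightarrow \widetilde U_{[n]}$ and an automorphism $\varphi$ of $N$, one builds an automorphism $\widetilde\varphi$ of $\widetilde U_{[n]}$ restricting to $\varphi$ by lifting $\varphi$ summand by summand along the direct sum decomposition, using that $\mathrm{Hom}_{\widetilde\Delta_n}$ between distinct indecomposable summands is controlled and that each indecomposable summand appears once. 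Concretely I would: (1) invoke Lemma~\ref{lem:aut-group-desing} to identify $\mathrm{Aut}_{\widetilde\Delta_n}(\widetilde U_{[n]}) \cong \mathrm{Aut}_{\Delta_n}(U_{[n]})$ and Proposition~\ref{prop:endomorphism-algebra} for the explicit block-lower-triangular shape; (2) show any two points of a given stratum are carried to one another by an element of $\mathrm{End}_{\widetilde\Delta_n}(\widetilde U_{[n]})$ that is invertible, adapting \cite[Lemma~2.27]{Pue2019} verbatim to this module; (3) conclude the orbit equals the stratum, and that strata are thus locally closed, irreducible, and partition the quiver Grassmannian.

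The main obstacle I anticipate is exactly the extension step (2): one must check that the abstract isomorphism between two subrepresentations $N, N' \subset \widetilde U_{[n]}$ can genuinely be realized by an ambient automorphism, which requires knowing that $\widetilde U_{[n]}$ is "large enough" — i.e. rigid enough that every partial automorphism extends. The cleanest route is to verify that $\widetilde U_{[n]}$, as a bounded representation of $\widetilde\Delta_n$ with the relations from Lemmas~\ref{lem:bounding-relations-part-1} and~\ref{lem:bounding-relations-part-2}, has the property that each indecomposable direct summand occurs with multiplicity one and is, moreover, relatively injective/projective enough for the lifting to go through; then the argument of \cite[Lemma~2.27]{Pue2019} carries over with only notational changes. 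A secondary, more minor point to address is that the dimension vector $\mathbf d_J$ is the one forced on any subrepresentation isomorphic to $\widetilde U$ for some $\mathbf e$-dimensional $U \subset U_{[n]}$ (cf. the proof of the Proposition preceding Corollary~\ref{cor:desing}), so that the strata we are describing are precisely the ones that contribute — but this is already essentially contained in the earlier results and needs only to be cited.
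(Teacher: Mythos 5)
Your argument is correct in outline but takes a genuinely different route from the paper. The paper's proof is a one-liner: $\widetilde U_{[n]}$, viewed as a representation of the bounded quiver $\widetilde\Delta_n$ with the relations of Lemma~\ref{lem:bounding-relations-part-1} and Lemma~\ref{lem:bounding-relations-part-2}, is an \emph{injective} bounded representation, so \cite[Lemma~2.28]{Pue2019} applies directly -- exactly the mechanism used for $U_{[n]}$ in Theorem~\ref{trm:cells-are-strata-part-I}. You instead go through the multiplicity-one extension argument of \cite[Lemma~2.27]{Pue2019} (modeled on \cite[Lemma~6.3]{Reineke2008}), which is the route the paper reserves for the \emph{non-injective} modules $M_J$. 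Your key observation that $\widetilde U_{[n]}=\bigoplus_j\widetilde{U(j;n)}$ with pairwise non-isomorphic indecomposable summands is correct (the extension functor $M\mapsto\widetilde M$ commutes with direct sums, and each $\widetilde{U(j;n)}$ has connected coefficient quiver), so your route is viable; what it buys is independence from the injectivity claim, at the cost of having to redo the extension step for $\widetilde\Delta_n$. That cost is real: \cite[Lemma~2.27]{Pue2019} is formulated for nilpotent $\Delta_n$-representations, and its proof rests on the explicit structure of ${\rm Hom}$ spaces between the strings $U(i;\ell)$, which is not the same for $\widetilde\Delta_n$-modules; so "carries over with only notational changes" is optimistic, and your own hedge that the summands must also be "relatively injective/projective enough" is precisely the point where work remains. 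Since $\widetilde U_{[n]}$ \emph{is} injective in the bounded category, the cleanest fix to your write-up is to prove that single fact and then quote \cite[Lemma~2.28]{Pue2019}, collapsing your steps (1)--(3) into the paper's argument; your appeal to Lemma~\ref{lem:aut-group-desing} and Proposition~\ref{prop:endomorphism-algebra} is then not needed for this lemma, though it is used elsewhere in the section.
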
 

\begin{proof}
The representation $\widetilde U_{[n]}$ is an injective bounded $\widetilde \Delta_n$ representation for the relations as in Lemma~\ref{lem:bounding-relations-part-1} and  Lemma~\ref{lem:bounding-relations-part-2}. Hence we can apply \cite[Lemma~2.28]{Pue2019}. 
\end{proof} 

\begin{rem}
Analogous to Section~\ref{sec:Skeletal-T-Action} we define an action of $T=(\mathbb{C}^*)^{n+1}$ on the quiver Grassmannians ${\rm Gr}_{\mathbf{d}_J} (\widetilde U_{[n]})$, induced by the fact that the basis for the vector spaces of $\widetilde U_{[n]}$ are subsets in the basis for the vector spaces of $U_{[n]}$, and we can hence restrict the weight tuples. In particular, the desingularization map is $T$ equivariant with respect to this action. This is convenient if it comes to the computation of equivariant Euler classes (see \cite[Lemma~2.1.(3)]{LaPu2020}). 
\end{rem}

\begin{lem}
The ${\rm Aut}_{\widetilde \Delta_n}( \widetilde U_{[n]})$ orbits of the $T$ fixed points in the quiver Grassmannian ${\rm Gr}_{\mathbf{d}_J} (\widetilde U_{[n]})$ provide a cellular decomposition. 
\end{lem}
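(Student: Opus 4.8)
The plan is to mimic, in the extended setting, the argument that was used for $X(k,n) = {\rm Gr}_{\bf k}(U_{[n]})$ itself. The key structural inputs are all now in place: by Theorem~\ref{issmooth} the quiver Grassmannian ${\rm Gr}_{\mathbf{d}_J}(\widetilde U_{[n]})$ is a smooth projective variety, realized as an iterated tower of fibrations with fibers that are products of classical Grassmannians; by the previous lemma its strata are exactly the ${\rm Aut}_{\widetilde\Delta_n}(\widetilde U_{[n]})$-orbits; and by Lemma~\ref{lem:aut-group-desing} together with Remark~\ref{rem:bound-quiver-relations} and Lemmata~\ref{lem:bounding-relations-part-1}--\ref{lem:bounding-relations-part-2}, $\widetilde U_{[n]}$ is an injective bounded representation of the bounded quiver $\widetilde\Delta_n$. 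So the first step is to invoke the general machinery of \S\ref{sec:equi-cycle} (and \cite[Proposition~1]{Cerulli2011}, \cite[Theorem~4.13]{Pue2020}): choose the weight function on the basis of $\widetilde U_{[n]}$ induced by restricting the weight function ${\rm wt}(v^{(i)}_h)=h$ used for $U_{[n]}$ (this makes sense precisely because, as recorded in the last remark, the bases of the vector spaces of $\widetilde U_{[n]}$ are subsets of the bases of those of $U_{[n]}$), so that along every arrow of the coefficient quiver $Q(\widetilde U_{[n]},B)$ the weight difference is constant; then the induced $\mathbb{C}^*$-action has finitely many fixed points indexed by the successor-closed subquivers of $Q(\widetilde U_{[n]},B)$ of dimension vector $\mathbf{d}_J$, and it induces an $\alpha$-partition of ${\rm Gr}_{\mathbf{d}_J}(\widetilde U_{[n]})$ into the attracting cells $W_L$.

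The second step is to check that each attracting set $W_L$ is an affine space, which is exactly the missing ingredient for a cellular decomposition. Here I would use the tower of fibrations from the proof of Theorem~\ref{issmooth}: the $\mathbb{C}^*$-action is compatible with the projections $Y_r \to Y_{r+1}$, each fiber is a product of classical Grassmannians acted on linearly, so on each fiber the attracting set of a torus-fixed point is a coordinate affine cell, and an induction on $r$ (the Bialynicki-Birula decomposition is compatible with equivariant fibrations whose fibers admit such decompositions) shows each $W_L$ is an iterated affine bundle over a point, hence an affine space. Alternatively — and this is the route the paper seems to favour — one simply cites \cite[Theorem~4.13]{Pue2020}, which gives a cellular decomposition for quiver Grassmannians of nilpotent representations of the (bounded) equioriented cycle once the weight function has constant difference along coefficient-quiver arrows; the extended quiver $\widetilde\Delta_n$ is not itself a cycle, but $\widetilde U_{[n]}$ is nilpotent and injective bounded, so the relevant part of that argument applies verbatim, or one appeals to the more general \cite[Section~5.1]{LaPu2020}. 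The final step is bookkeeping: since the strata are the ${\rm Aut}_{\widetilde\Delta_n}(\widetilde U_{[n]})$-orbits and, as in the proof of Theorem~\ref{trm:cells-are-strata-part-I}, each stratum of a quiver Grassmannian for a nilpotent injective bounded representation of this shape decomposes into cells indexed by the $\mathbb{C}^*$-fixed points it contains with pairwise non-isomorphic coordinate subrepresentations, each ${\rm Aut}_{\widetilde\Delta_n}(\widetilde U_{[n]})$-orbit is a single cell; conversely each cell lies in one orbit. Hence the orbits coincide with the cells of the $\mathbb{C}^*$-decomposition, which is the assertion.

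The main obstacle is verifying that the chosen weight function actually produces an honest cellular decomposition of ${\rm Gr}_{\mathbf{d}_J}(\widetilde U_{[n]})$, i.e. that the attracting cells are affine spaces — the extended quiver has two families of arrows ($\alpha$ and $\beta$) subject to commutation relations, so one must confirm that the constant-weight-difference condition can be met simultaneously along all of them and that \cite[Theorem~4.13]{Pue2020} or \cite[Section~5.1]{LaPu2020} genuinely covers this bounded-quiver situation; if there is any doubt, the safe fallback is the explicit tower-of-fibrations argument of Theorem~\ref{issmooth}, which reduces everything to the classical Bialynicki-Birula decomposition of products of ordinary Grassmannians and needs no black box. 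Identifying the orbits with the cells afterwards is then routine, following \cite[Lemma~2.28]{Pue2019} and the pattern of Theorem~\ref{trm:cells-are-strata-part-I}.
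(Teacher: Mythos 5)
Your route is genuinely different from the paper's. You run the Bialynicki--Birula attracting-set machinery of \S\ref{sec:equi-cycle} on ${\rm Gr}_{\mathbf{d}_J}(\widetilde U_{[n]})$ and then match the resulting cells with the ${\rm Aut}_{\widetilde\Delta_n}(\widetilde U_{[n]})$-orbits. The paper instead computes the orbits directly: by Proposition~\ref{prop:endomorphism-algebra} and Lemma~\ref{lem:aut-group-desing} every component $A^{(i,r)}$ of an automorphism is lower triangular, so the orbit of the component $p^{(i,r)}$ of a fixed point inside the classical Grassmannian ${\rm Gr}_{d_J^{(i,r)}}(\mathbb{C}^{n-r+1})$ is a Schubert cell; the full orbit is the locus in the product of these cells cut out by the arrow conditions, and since the $\alpha$-maps are inclusions and the $\beta$-maps are coordinate projections this locus is again an affine cell; finally every orbit contains a $T$-fixed point, so together with the preceding lemma (strata are exactly the orbits) the orbit decomposition is cellular. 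No torus-attracting-set input is needed at all.

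As written, your version has gaps precisely at the point you yourself flag as ``the main obstacle'', and they are not cosmetic. First, \cite[Theorem~4.13]{Pue2020} and \cite[Section~5.1]{LaPu2020} are statements about the equioriented cycle $\Delta_n$; $\widetilde\Delta_n$ is a different bounded quiver and neither result covers it verbatim, so ``the relevant part of that argument applies'' is an assertion, not a proof. Second, the weight function you propose (the restriction of ${\rm wt}(v^{(i)}_h)=h$) has weight difference $0$ along every $\beta$-arrow of the coefficient quiver; a zero weight difference along arrows is exactly the feature that, for the first basis of $U_{[n]}$, made the attracting sets fail to be affine (see the proof of Theorem~\ref{trm:cellular-decomposition}), so it cannot be taken for granted that this choice works. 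Your fallback via the tower of fibrations of Theorem~\ref{issmooth} is the most promising repair, but it is only sketched: one must check that the fibrations $Y_r\to Y_{r+1}$ are equivariantly locally trivial and that attracting sets are compatible with them. Third, even granting a cell decomposition by attracting sets, identifying the cells with the orbits in the way you propose requires that each stratum of ${\rm Gr}_{\mathbf{d}_J}(\widetilde U_{[n]})$ contain exactly one fixed point, i.e.\ that distinct $T$-fixed points give pairwise non-isomorphic $\widetilde\Delta_n$-subrepresentations; this was verified for $U_{[n]}$ in the proof of Theorem~\ref{trm:cells-are-strata-part-I} but is only asserted by analogy here. Each of these points can probably be closed, but the paper's direct orbit computation sidesteps all three.
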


\begin{proof}
The vector space over the vertex $(i,r)$ of $\widetilde{\Delta}_n$ of any subrepresentation in ${\rm Gr}_{\mathbf{d}} (\widetilde U_{[n]})$ is an element of the Grassmannian of subspaces ${\rm Gr}_{d_J^{(i,r)}} ( \mathbb{C}^{n-r+1})$.
Let $p \in {\rm Gr}_{\mathbf{d}_J} (\widetilde U_{[n]})$ be a $T$ fixed point. By $p^{(i,r)} \in {\rm Gr}_{d_J^{(i,r)}} ( \mathbb{C}^{n-r+1})$ we denote its component over the vertex $(i,r)$ of $\widetilde{\Delta}_n$. Every element $A \in {\rm Aut}_{\widetilde \Delta_n}( \widetilde U_{[n]})$ acts on $p^{(i,r)}$ via the component $A^{(i,r)}$, which is a lower triangular matrix by Proposition~\ref{prop:endomorphism-algebra} and Lemma~\ref{lem:aut-group-desing}. Hence the orbit of $p^{(i,r)}$ in the Grassmannian of subspaces ${\rm Gr}_{d_J^{(i,r)}} ( \mathbb{C}^{n-r+1})$ is a cell. 

The orbit ${\rm Aut}_{\widetilde \Delta_n}( \widetilde U_{[n]}).p$ is the intersection of these cells along the maps of $\widetilde U_{[n]}$. Since the maps along arrows $\alpha_{i,r}$ are inclusions and the maps along $\beta_{i,r}$ are projections where the last coordinate is sent to zero, the intersection of the cells in the Grassmannians of subspaces is a cell of ${\rm Gr}_{\mathbf{d}_J} (\widetilde U_{[n]})$.

It remains to show that every ${\rm Aut}_{\widetilde \Delta_n}( \widetilde U_{[n]})$ orbit contains a $T$ fixed point. Locally every orbit in ${\rm Gr}_{d_J^{(i,r)}} ( \mathbb{C}^{n-r+1})$ contains a $T$ fixed point. From the explicit shape of the maps of $\widetilde U_{[n]}$ it follows that the intersection of these cells also contains a $T$ fixed point. 
Hence the stratification of  ${\rm Gr}_{\mathbf{d}_J} (\widetilde U_{[n]})$ into the isomorphism classes of subrepresentations is also a cellular decomposition indexed by the $T$ fixed points.
\end{proof}

\begin{rem}Let us  consider the $\bC^*$ action on ${\rm Gr}_{\mathbf{d}_J} (\widetilde U_{[n]})$ induced by the $\bC^*$ action \eqref{rem:C*-action} on $X(k,n)$.
The cells in the previous lemma coincide with the attracting sets of the fixed points as introduced in Section~\ref{sec:equi-cycle} for the cycle.
\end{rem}

\appendix
\section{Linear Degenerations}\label{sec:linear-degenenerations}
Analogous to \cite[Section~2]{CFFFR2017} we construct linear degenerations of the Grassmannian ${\rm  Gr}_k(n)$ of $k$-dimensional subspaces of $\mathbb{C}^n$: Recall that ${\rm  Gr}_k(n)$ is isomorphic to the quiver Grassmannian for $\Delta_n$ with the representation 
\[ 
M_{\rm id} 
:= \Big( \big( V^{(i)}:= \mathbb{C}^n \big)_{i \in \mathbb{Z}_n}, \big( V_{\alpha_i}:= {\rm id}_{\mathbb{C}^n} \big)_{i \in \mathbb{Z}_n} \Big) \in {\rm R}_\mathbf{n}(\Delta_n)\]
and dimension vector $\mathbf{k} = (k,\dots,k) \in \mathbb{Z}^n$. 
 This motivates our  definition of linear degenerations. Since there are multiple ways of realizing ${\rm  Gr}_k(n)$ as a quiver Grassmannian, this is of course only one possible definition.

\begin{dfn}A linear degeneration of ${\rm  Gr}_k(n)= {\rm Gr}_\mathbf{k}(M_{\rm id})$ is a quiver Grassmannian ${\rm Gr}_\mathbf{k}(M)$ for $M \in  {\rm R}_\mathbf{n}(\Delta_n)$. 
\end{dfn}
In particular,  $X(k,n) = {\rm Gr}_\mathbf{k}(U_{[n]})$ is a linear degeneration of ${\rm  Gr}_k(n)$. 

The group $G_\mathbf{n}$ acts on ${\rm R}_\mathbf{n}(\Delta_n)$ such that the isomorphism classes of the $\Delta_n$ representations with dimension vector $\mathbf{n}= (n,\dots,n) \in \mathbb{Z}^n$ are exactly the $G_\mathbf{n}$ orbits in ${\rm R}_\mathbf{n}(\Delta_n)$. Moreover two quiver Grassmannians are isomorphic if the corresponding quiver representations are isomorphic. Hence the $G_\mathbf{n}$ orbits in ${\rm R}_\mathbf{n}(\Delta_n)$ parametrize the isomorphism classes of linear degenerations of ${\rm  Gr}_k(n)$.

The rank tuple of $M \in {\rm R}_\mathbf{n}(\Delta_n)$ is 
\[ \mathbf{r} := \mathbf{r}(M) := \big( r_{i,\ell} := {\rm rank} \, M_{i+\ell-1} \circ \dots \circ M_i \big)_{(i,\ell) \in \mathbb{Z}_n \times \{0,\dots, n^2+1\}}. \]
It is sufficient to consider $\ell \leq n^2+1$ because the maximal nilpotent 
representations in ${\rm R}_\mathbf{n}(\Delta_n)$ are $U(i;n^2)$, i.e. the representations whose coefficient quiver is an equioriented string on $n^2$ points winding around the cycle and ending over the $i$-th vertex. This implies that the isomorphism classes of linear degenerations are parametrized by the rank tuples of $G_\mathbf{n}$ orbits in ${\rm R}_\mathbf{n}(\Delta_n)$, since isomorphic representations have the same rank tuple and two points with the same rank tuple are conjugated by \cite[p. 32]{Kempken1982}.

We define a partial order of the rank tuples by comparing their entries component wise. We denote the rank tuples of $M_{\rm id}$ and $U_{[n]}$ by $\mathbf{r}_{\rm id}$ and $\mathbf{r}_{[n]}$. The entries of $\mathbf{r}_{\rm id}$ are all equal to $n$ and the $(i,\ell)$-th entry of $\mathbf{r}_{[n]}$ equals $\max\{n-\ell, 0\}$. In the setting of linear degenerations of the flag variety of type A, the degenerations with rank tuple between the flag and the so called Feigin degeneration $\mathcal{F}l^a_{n+1}$ are all of the same dimension \cite[Theorem~A]{CFFFR2017}. 

The construction of the quiver representation $U_{[n]}$ is somehow analogous to the construction of $\mathcal{F}l_{n+1}^a$ \cite[Definition~2.5]{CFR12}. But despite that there are linear degenerations of ${\rm  Gr}_k(n)$ with rank tuple between $\mathbf{r}_{\rm id}$ and $\mathbf{r}_{[n]}$ which are not of dimension $k(n-k)$, as explained in the following example.

\begin{example}\label{ex:degenerations}
We collect some intermediate degenerations which are not of dimension $k(n-k)$. For $M = U(i;n^2)$ the quiver Grassmannian ${\rm Gr}_\mathbf{k}(M)$ consists of the single point $U(i;kn)$ and its rank tuple is between $\mathbf{r}_{\rm id}$ and $\mathbf{r}_{[n]}$.
For $r \in \mathbb{Z}_{\geq 0}$ with $r \leq n$ we define 
\[
M_r := \bigoplus_{i =1}^{n-r} U(i;n) \oplus V_{\Delta_n} \otimes \mathbb{C}^r
\] 
 and compute 
\[ \dim_\mathbb{C} {\rm Gr}_\mathbf{k}(M_r) = \max_{\ell \in \{0,1,\dots,r\}} \big\{ k(n-k) + (k-\ell)(\ell -r) - \ell(n-k+\ell -r)  \big\} \]
using \cite[Proposition~3.4]{Pue2020}. This only matches $k(n-k)$ if $r=n$ or $r=0$, which corresponds exactly to $M_{\rm id}$ and $U_{[n]}$, respectively.
\end{example}

We close by showing that the degeneration of the Grassmannian into $X(k,n)$ is not flat. 
Let us consider the case $n=3$, $k=1$. The classical Grassmannian (the projective plane in this
case) is embedded diagonally into $\bP^2\times\bP^2\times \bP^2$. In particular, the coordinate
ring is triply graded. The component of degree $(1,1,1)$ is of dimension $10=\dim_\bC S^3(\bC^3)$.
We can make it explicit by introducing the coordinates $x_i,y_i,z_i$, $i=1,2,3$ corresponding
to three projective planes. Then a basis of the $(1,1,1)$ component is formed by monomials
\begin{gather*}
x_1y_1z_1,\ x_1y_1z_2,\ x_1y_1z_3,\ x_1y_2z_2,\ x_1y_2z_3,\\ 
x_1y_3z_3,\ x_2y_2z_2,\ x_2y_2z_3,\ x_2y_3z_3,\ x_3y_3z_3.
\end{gather*}
Now let us look at the quiver Grassmannian $X(1,3)\subset \bP^2\times\bP^2\times \bP^2$ formed by 
triples of lines $(l_1,l_2,l_3)$ such that ${\rm pr}_1 l_1\subset l_2$, ${\rm pr}_2 l_2\subset l_3$,
${\rm pr}_3 l_3\subset l_1$. Then the following $11$ elements form a basis of the degree $(1,1,1)$ component:
\begin{gather*}
x_1y_1z_1,\ x_1y_2z_1,\ x_1y_2z_3,\ x_2y_2z_2,\ x_2y_2z_3,\\ 
x_2y_2z_1,\ x_2y_3z_3,\ x_2y_3z_1,\ x_1y_3z_3,\ x_1y_3z_1,\ x_3y_3z_3.
\end{gather*}
In particular, the two weight zero elements $x_1y_2z_3$ and $x_2y_3z_1$ are linearly independent. In fact, 
if one takes $l_1={\rm span}(a,b,c)$ with $(a,b,c)\ne 0$, then $l_2={\rm span}(0,b,c)$, $l_3={\rm span}(0,0,c)$
and the value of $x_1y_2z_3$ is equal to $abc$. However, the value of $x_2y_3z_1$ is zero.

\section{The case $k=1$}\label{sec:special-case}
In this appendix we consider the case  $k=1$ and arbitrary $n$ in more details.

For $k=1$, Grassmann necklaces are collections $(i_1,\dots,i_n)$, with $i_a\in[n]$ such that if $i_a\ne a$, then $i_{a+1}=i_a$ (as usual,
$i_{n+1}=i_1$). Recall that, by Theorem \ref{trm:irred-comp-and-dim}, the irreducible components $X_J(1,n)$ of the quiver Grassmannian $X(1,n)$ are labeled by $J=\{j\}$, $j\in [n]$. We use the notation $X_j(1,n):=X_{\{j\}}(1,n)$. 

\begin{prop}
One has
\begin{enumerate}
\item The total number of cells of $X(1,n)$ is equal to $2^n-1$.
\item The Poincar\'e polynomial is given by $(1+q)^n-q^n$.
\item All irreducible components of $X(1,n)$ are isomorphic. Each irreducible component 
$X_j(1,n)$ is isomorphic to a height $n$ Bott tower of $\bP^1$
fibrations over  a point.
\item \label{desiso} The desingularization map is an isomorphism over each irreducible component $X_j(1,n)$. 
\end{enumerate}
\end{prop}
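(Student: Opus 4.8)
The plan is to obtain (i) and (ii) from the cellular decomposition, and (iii) and (iv) from an analysis of the desingularization. For (i) and (ii) I would first record the bijection between $\mathcal{GN}_{1,n}$ and the nonempty subsets of $[n]$, sending $\mathcal{I}=(i_1,\dots,i_n)$ to $S(\mathcal{I}):=\{a\colon i_a=a\}$ (equivalently, the identification of Grassmann necklaces with $(1,n)$ positroids, which are exactly the nonempty subsets of $[n]$). The one thing to check is that $S(\mathcal{I})$ is nonempty: a $(1,n)$ necklace is either constant $\equiv v$, whence $v\in S(\mathcal{I})$, or it changes value at some $a$, which by the necklace condition forces $i_a=a$. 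This gives $|\mathcal{GN}_{1,n}|=2^n-1$, hence (i). For (ii), Theorem~\ref{trm:cell-dim-quiver-Grass} together with Remark~\ref{rem:cell-dim-tnn-Grass} gives $\dim_{\mathbb{C}}C(\mathcal{I})=k(n-k)-\dim_{\mathbb{R}}\Pi_{\mathcal{M}(\mathcal{I})}$, and for $k=1$ the positroid cell of $S$ is $\{[x_1:\dots:x_n]\colon x_i>0\iff i\in S\}\cong\mathbb{R}_{>0}^{|S|-1}$, so $\dim_{\mathbb{C}}C(\mathcal{I})=(n-1)-(|S(\mathcal{I})|-1)=n-|S(\mathcal{I})|$. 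Summing over necklaces,
\[
P_{1,n}(q)=\sum_{\emptyset\ne S\subseteq[n]}q^{\,n-|S|}=\sum_{l=0}^{n-1}\binom{n}{l}q^{l}=(1+q)^n-q^n .
\]
(Equivalently, (ii) is immediate from Theorem~\ref{PP}, since ${\rm Gr}(1,n)_{\ge 0}=\mathbb{P}^{n-1}_{\ge 0}$ has Poincaré polynomial $\sum_{m\ge 1}\binom{n}{m}q^{m-1}$.)

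For (iii) and (iv), write $J=\{j\}$. The cyclic rotation of $\Delta_n$ ($i\mapsto i+1$) fixes $U_{[n]}$ up to permuting its indecomposable summands, so it induces an automorphism of $X(1,n)$ carrying $X_j(1,n)$ to $X_{j+1}(1,n)$; hence all irreducible components are isomorphic, and it suffices to treat $X_{\{j\}}(1,n)$ and its desingularization ${\rm Gr}_{\mathbf{d}_J}(\widetilde U_{[n]})$. Using the explicit dimension vector (namely $d_J^{(i,r)}=1$ if $r\le n-((j-i)\bmod n)$ and $0$ otherwise), I would run the tower ${\rm Gr}_{\mathbf{d}_J}(\widetilde U_{[n]})=Y_1\to\cdots\to Y_n={\rm pt}$ of Theorem~\ref{issmooth} and verify that at each step $Y_r\to Y_{r+1}$ exactly one vertex (the $i$ with $i\equiv j+r$) has $d_J^{(i,r)}=1>0=d_J^{(i,r+1)}$, every other vertex contributing a point, and that the single nontrivial factor is a $\mathbb{P}^1$ — the Grassmannian of lines in the $2$-dimensional space $U_{\alpha_{i,r}}^{-1}(N^{(i+1,r+1)})$, the $\alpha$- and $\beta$-subrepresentation constraints together forcing the dimension drop from the naive $\widetilde U_{[n]}^{(i,r)}$. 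Thus ${\rm Gr}_{\mathbf{d}_J}(\widetilde U_{[n]})$ is a height-$n$ Bott tower of $\mathbb{P}^1$-fibrations over a point, smooth of dimension $n-1=\dim X_{\{j\}}(1,n)$ (Theorem~\ref{trm:irred-comp-and-dim}).

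It then remains to show the desingularization map $f_J\colon{\rm Gr}_{\mathbf{d}_J}(\widetilde U_{[n]})\to X_J(1,n)$ is an isomorphism; this proves (iv), and fed back into the previous paragraph it gives (iii). The map $f_J$ is projective and, by the proposition preceding Corollary~\ref{cor:desing}, a bijection over the dense cell $C(p_J)$, hence birational with smooth source, so it suffices to see $f_J$ has empty exceptional locus. As a guide I would exploit the $T$-equivariance of $f_J$: the tower has $2^{n-1}$ torus-fixed points, and the cells $C(\mathcal{I})$ of $X_{\{j\}}(1,n)=\overline{C(p_J)}$ (those with $\mathcal{I}\le\mathcal{I}(p_J)$, i.e. the subsets $S\ni j$ under the bijection of (i)) also number $2^{n-1}$, with equal Poincaré polynomial $(1+q)^{n-1}$ on both sides; since an equivariant morphism sends each attracting cell into one of no larger dimension, surjectivity forces $f_J$ to induce a dimension-preserving bijection of cells. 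The remaining task — showing $f_J$ restricts to an isomorphism on each cell, equivalently that every fiber of $f_J$ is a single point — is where the $k=1$ rigidity of the extended representation $\widetilde U_{[n]}$ enters: the extension of a representation lying in a cell of $X(1,n)$ is uniquely determined. Then $f_J$ proper and bijective onto $X_{\{j\}}(1,n)$, which is normal (indeed smooth, as one can also check from the tangent-space description of Remark~\ref{rem:non-smoothness-irreducible components} together with Theorem~\ref{trm:cells-are-strata-part-I}), is an isomorphism. The main obstacle throughout is exactly this no-contraction statement for $k=1$: the positive-dimensional fibers of $f_J$ that occur for general $k$ (cf. Remark~\ref{rem:non-smoothness-irreducible components}) must be shown to collapse to points in this case.
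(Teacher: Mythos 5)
Parts (i) and (ii) are correct and amount to the same short argument the paper gives (the paper just cites Theorem~\ref{PP} and \cite{W05}; your explicit bijection $\mathcal{I}\mapsto\{a: i_a=a\}$ with the nonempty subsets of $[n]$ and the dimension count $\dim_\bC C(\mathcal{I})=n-|S(\mathcal{I})|$ is a fine way to make it self-contained). Your analysis of the tower for ${\rm Gr}_{\mathbf{d}_J}(\widetilde U_{[n]})$ in (iii) is also correct, including the important point that the nontrivial fiber at each step is ${\rm Gr}_1$ of the two-dimensional subquotient cut out by the $\alpha$-constraint, not of all of $\widetilde U_{[n]}^{(i,r)}$. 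The injectivity of $f_J$ that you call ``$k=1$ rigidity'' is true, but you assert it where the paper proves it, and the proof is one line you should write down: every nonzero $N^{(i,r)}$ is one-dimensional and the $\beta$-maps give inclusions $N^{(i,r)}\subset N^{(i,r-1)}$, so $N^{(i,r)}=N^{(i,1)}$ for all $r$ with $d_J^{(i,r)}=1$, i.e.\ a point of the desingularization is determined by its image.

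The genuine gap is the last step of (iv): ``$f_J$ proper and bijective with smooth source, hence an isomorphism'' requires the \emph{target} $X_j(1,n)$ to be normal (the normalization of a cuspidal cubic is proper, bijective and birational from a smooth curve without being an isomorphism). In your logical order this normality cannot be imported from (iii), because you deduce (iii) from (iv); and the parenthetical appeal to the tangent-space criterion of Remark~\ref{rem:non-smoothness-irreducible components} is not carried out (it would require verifying $\dim_\bC{\rm Hom}_{\Delta_n}(U,M_J/U)=n-1$ at each of the $2^{n-1}$ fixed points, precisely the kind of computation that fails for general $k$). The $T$-equivariant cell-counting paragraph does not close this gap either: a proper bijection matching cells of equal dimension need not be an isomorphism of varieties. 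The paper avoids the issue by proving (iii) first and directly: in the second basis, $X_n(1,n)$ is the set of lines $(V_a)$ with $V_a\subset V_{a+1}+\bC v_n$ and $V_a\subset{\rm span}(v_a,\dots,v_n)$, so $V_n=\bC v_n$ and each $V_a$ is an arbitrary line in the plane $\bC v_n+s_{-1}V_{a+1}$; this exhibits $X_j(1,n)$ itself as a Bott tower, hence smooth, and then the one-line bijectivity of $f_J$ finishes (iv). To repair your version, either supply the normality of $X_j(1,n)$ independently, or switch to the paper's order and give the direct coordinates on $X_j(1,n)$, after which your tower computation on the resolution becomes a consistency check rather than the source of (iii).
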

\begin{proof}
The first two claims can be easily deduced from Theorem \ref{PP} and \cite{W05}. 
Let us prove the third claim: There are $n$ irreducible components and the rotation group
action on $X(k,n)$ induces the transitive action on the components $X_j(1,n)$. Hence 
all the irreducible components are isomorphic. Now let us consider the component $X_n(1,n)$.
Recall the second basis  of $U_{[n]}$ from Definition \ref{dfn:different-relisation-of-U_[n]}.
Let $v^{(a)}_1,\dots,v^{(a)}_n$ be the corresponding basis of $U_{[n]}^{(a)}$. Recall that we have identified 
all the spaces $U_{[n]}^{(a)}$ with $\bC^n$ in such a way that the map $U_{[n]}^{(a)}\to U_{[n]}^{(a+1)}$
sends $v^{(a)}_i$ to $v^{(a+1)}_{i+1}$ for $i<n$ and $v_n$ is sent to zero. We denote this map by $s_1$.
We also use the notation $s_{-1}$ for the map
sending $v_1$ to zero and $v^{(a)}_i$ to $v^{(a-1)}_{i-1}$ for $i>1$.
Then $X_n(1,n)$ consists of collections $(V_a)_{a=1}^n$ of lines in $\bC^n$ such that
\begin{itemize}
    \item $V_a\subset V_{a+1} + \bC v_n$,
    \item $V_a\subset \mathrm{span}(v_a,\dots,v_n)$.
\end{itemize}
Then one has $V_n=\bC v_n$, $V_{n-1}$ is an arbitrary line in the span of $v_n$ and $v_{n-1}$, 
$V_{n-2}$ is an arbitrary line in the span of $v_n$ and $s_{-1}V_{n-1}$, and in general
$V_a$ is an arbitrary line in the span of $v_n$ and $s_{-1}V_{a+1}$. This construction identifies
$X_n(1,k)$ with a Bott tower.

Finally, let us prove claim \eqref{desiso} for $j=n$. The dimensions $d_n^{(i,r)}:=d_{\{n\}}^{(i,r)}$ are easily computed
as 
\[
d_n^{(i,r)}=\begin{cases}1, & r\le i\\ 0, & r>i.\end{cases}
\]
The quiver Grassmannian ${\rm Gr}_{{\bf d}_n}(\widetilde U_{[n]})$ (which desingularizes the component 
$X_n(1,n)$) consists of collections of subspaces $(V^{(i,r)})$ such that (in particular)
$V^{(i,r)}\subset V^{(i,r-1)}$ and the non-trivial (i.e. one-dimensional) subspaces correspond to $r\le i$. 
Therefore, for $r\le i$ one has $V^{(i,r)}=V^{(i,1)}$, which means that the map 
${\rm Gr}_{{\bf d}_n}(\widetilde U_{[n]})\to X_n(1,n)$ is one-to-one.
\end{proof}
In the $k=1$ case, also the cell dimensions have a particularly nice behavior. For an element $i_\bullet=(i_1, \ldots, i_n)\in\mathcal{GN}_{1,n}$, we denote by $n_{i_\bullet}$ the number of distinct entries of $i_\bullet$. 
\begin{example}
If $i_\bullet=(1,1, \ldots, 1)\in \mathcal{GN}_{1,n}$, then $n_{i_\bullet}=1$. On the other hand, if $i_\bullet=(1,2,3,\ldots,n)\in \mathcal{GN}_{1,n}$, then $n_{i_\bullet}=n$.
\end{example}

The following lemma might be known to the experts, but we were not able to find it in the literature and therefore we decided to include it.
 
\begin{lem}
Let $i_\bullet\in\mathcal{GN}_{1,n}$. Then,
\[
\dim_\bC C(i_\bullet)=n-n_{i_\bullet}.
\]
\end{lem}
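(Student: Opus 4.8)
The plan is to reduce, via Theorem~\ref{trm:cell-dim-quiver-Grass}, the computation of $\dim_\bC C(i_\bullet)$ to a length computation, and then to evaluate that length through the dictionary with the totally nonnegative Grassmannian of lines. Concretely: by Theorem~\ref{trm:cell-dim-quiver-Grass} we have $\dim_\bC C(i_\bullet)=l(f(i_\bullet))$, where $f(i_\bullet)\in\mathcal{B}_{1,n}$ is the bounded affine permutation attached to $i_\bullet$. On the other hand, Remark~\ref{rem:cell-dim-tnn-Grass} with $k=1$ gives $\dim\Pi_{\mathcal{M}(f(i_\bullet))}=(n-1)-l(f(i_\bullet))$, so altogether
\[
\dim_\bC C(i_\bullet)=(n-1)-\dim\Pi_{\mathcal{M}(f(i_\bullet))}.
\]
Thus it suffices to show that the positroid cell attached to $i_\bullet$ has dimension $n_{i_\bullet}-1$.

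For $k=1$ this is elementary. Identifying $\binom{[n]}{1}$ with $[n]$, a $(1,n)$ positroid is simply a nonempty subset $S\subseteq[n]$ (realized e.g.\ by the line $\bC\cdot\sum_{s\in S}e_s$), and ${\rm Gr}(1,n)_{\ge0}$ is the standard $(n-1)$-simplex $\{(c_1,\dots,c_n):c_i\ge0,\ \sum_i c_i=1\}$; the positroid cell $\Pi(S)$ is the relative interior of the face on the coordinates in $S$, i.e.\ $\Pi(S)\cong\bR_{>0}^{|S|-1}$, so $\dim\Pi(S)=|S|-1$. It remains to identify the support $S$ of the positroid $\mathcal{M}(f(i_\bullet))$. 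Since $\mathcal{I}(f(i_\bullet))=i_\bullet$ and $\mathcal{M}(f(i_\bullet))$ is, by definition (Remark~\ref{rem:cell-dim-tnn-Grass} and \S\ref{subsec:Grassmann-Necklace}), the positroid with $\mathcal{I}(\mathcal{M}(f(i_\bullet)))=\mathcal{I}(f(i_\bullet))$, we get $i_a=\min_a S$ for all $a\in[n]$. Hence $i_a\in S$ for every $a$, while for $a\in S$ one has $\min_a S=a$, so $a\in\{i_1,\dots,i_n\}$; therefore $S=\{i_1,\dots,i_n\}$ and $|S|=n_{i_\bullet}$. Combining everything,
\[
\dim_\bC C(i_\bullet)=(n-1)-(n_{i_\bullet}-1)=n-n_{i_\bullet}.
\]

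The only step with any content is the identification $S=\{i_1,\dots,i_n\}$, and it is immediate from the formula for $\mathcal{I}(\mathcal{M})$, so I do not expect a genuine obstacle. If one prefers to avoid the totally nonnegative picture entirely, there is a self-contained variant: writing $S=\{s_1<\dots<s_m\}$, one checks that $f(i_\bullet)$ fixes $[n]\setminus S$ pointwise, sends $s_t\mapsto s_{t+1}$ for $t<m$, and sends $s_m\mapsto s_1+n$; a direct count of the inversions $(i,j)$ with $i\in[n]$ and $i<j<i+n$ (splitting according to whether $i\in S$, and whether $j\le n$ or $j>n$, and using $i\le f(i)\le i+n$ to discard the $j>n$ contributions) yields
\[
l(f(i_\bullet))=\sum_{t=1}^{m-1}(s_{t+1}-s_t-1)+(n-s_m)+(s_1-1),
\]
which telescopes to $n-m=n-n_{i_\bullet}$. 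In this variant the (mild) obstacle is just the bookkeeping in the inversion count, which I would organize exactly along these case distinctions.
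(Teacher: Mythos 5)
Your proof is correct, but it takes a different route from the one in the paper. The paper also starts from Theorem~\ref{trm:cell-dim-quiver-Grass}, reducing the claim to showing $l(f_{i_\bullet})=n-n_{i_\bullet}$, but then proves this identity by induction on $n-n_{i_\bullet}$: it explicitly constructs an intermediate necklace $i'_\bullet$ with $n_{i'_\bullet}=n_{i_\bullet}+1$, writes out how the associated bounded affine permutations differ at the two positions $t$ and $h$, and checks that exactly one inversion is gained, so $\ell(f_{i_\bullet})=\ell(f_{i'_\bullet})+1$. You instead close the computation in one step by passing through the totally nonnegative side: combining Theorem~\ref{trm:cell-dim-quiver-Grass} with Remark~\ref{rem:cell-dim-tnn-Grass} gives $\dim_\bC C(i_\bullet)=(n-1)-\dim\Pi_{\mathcal{M}(f(i_\bullet))}$, and for $k=1$ the positroid cell is just the open face of the standard simplex on the support $S$, with $S=\{i_1,\dots,i_n\}$ following immediately from $i_a=\min_a S$. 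Both the identification of $S$ and the dimension count $\dim\Pi(S)=|S|-1$ are correct, and there is no circularity since neither Theorem~\ref{trm:cell-dim-quiver-Grass} nor Remark~\ref{rem:cell-dim-tnn-Grass} relies on this lemma. Your approach buys brevity and avoids induction, at the cost of leaning on the tnn dictionary (a fact the paper quotes from the literature rather than proves); the paper's inductive argument stays entirely inside the affine-permutation combinatorics and is self-contained modulo Theorem~\ref{trm:cell-dim-quiver-Grass}. Your second, inversion-counting variant is also correct (the description of $f(i_\bullet)$ as fixing $[n]\setminus S$ and cyclically advancing along $S$ matches the paper's definition of $f(\mathcal{I})$, and the telescoping gives $n-m$), and is in spirit a non-inductive version of what the paper does.
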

\begin{proof}
We proceed by induction on $n-n_{i_\bullet}$.  If $n_{i_\bullet}=n$, then necessarily $i_\bullet=(1,2,\ldots,n)$, which corresponds to the (unique) closed cell, having dimension $0=n-n_{i_\bullet}$, as desired. 

Suppose now that $n-n_{i_\bullet}>0$.
If $n-n_{i_\bullet}=1$, then $n_{i_\bullet}=1$ and there exists a $j\in[n]$ such that $\overline{C(i_\bullet)}=X_j(1,n)$, and hence $\dim_\bC(C(i_\bullet))=\dim_\bC(X(1,n))=n-1$ as claimed.

Thus, we assume that $1<n_{i_\bullet}<n$. In this case, there exists a $t\in[n]$ such that $i_j\neq t$ for any $j\in[n]$. In particular, $i_t\neq t$ and, by definition of Grassmann necklace, it must hold $i_t=i_{t+1}$.   

Denote by $h:=\max_t\{a\mid i_a=a\}$ (this is well-posed as $n_{i_\bullet}\neq 1$) and consider $i_\bullet'\in\mathcal{GN}_{1,n}$ given by
\[
i'_r=\left\{
\begin{array}{ll}
i_r    & \hbox{ if }r\in[h]_{t+1},\\
t     & \hbox{ if } r\not\in[h]_{t+1}.
\end{array}
\right.
\]
By construction, $n_{i'_{\bullet}}=n_{i_\bullet}+1$ and so we can assume by induction that $\dim_\bC C(i'_\bullet)=n-n_{i'_\bullet}$. 

For $j_\bullet\in \mathcal{GN}_{1,n}$, we denote by $f_{j_\bullet}$ the corresponding bounded permutation as in \S\ref{subsec:BoundedPerms}. Observe that 
\[
f_{i_\bullet}(a)=f_{i'_\bullet}(a), \quad\Longleftrightarrow a\not\in \{h+n\bZ\}\cup \{t+n\bZ\},
\]
where $f_{i_\bullet}(a)=a$ for all $a\in [t-1]_{h+1}+n\bZ$.
Moreover, if we set $l:=i_t$, then
\[
f_{i_\bullet}(t)=t, \quad f_{i'_\bullet}(t)=\left\{
\begin{array}{ll}
l   & \hbox{ if }t<l,\\
l+n     & \hbox{otherwise}
\end{array}
\right.
\]
and
\[
f_{i_\bullet}(h)=\left\{
\begin{array}{ll}
l   & \hbox{ if }h<l,\\
l+n     & \hbox{otherwise}
\end{array}
\right., \quad f_{i'_\bullet}(h)=\left\{
\begin{array}{ll}
k   & \hbox{ if }h<t,\\
k+n     & \hbox{otherwise}
\end{array}
\right.
\]
Finally, we notice that
\begin{align*}
\ell(f_{i_\bullet})&=|\{(i,j)\in[n]\times\bZ:\ i<j \text{ and } f_{i_\bullet}(i)>f_{i_\bullet}(j)\}|\\
&=|\{(i,j)\in[n]\times\bZ:\ i<j \text{ and } f_{i'_\bullet}(i)>f_{i_\bullet}(j)\}\cup\{(h,m)\}|\\
&=\ell(f_{i'_\bullet})+1, 
\end{align*}
where $m=t$ if $h<t$ and $m=t+n$ otherwise. The claim now follows by the inductive hypothesis together with Theorem \ref{trm:cell-dim-quiver-Grass}.
\end{proof}


\end{document}